\title{Majority dynamics on random graphs: the multiple states case}
\author{Jordan Chellig \hspace{1cm} Nikolaos Fountoulakis\footnote{Email for correspondence: n.fountoulakis@bham.ac.uk} \\ \\ 
School of Mathematics \\
University of Birmingham \\
Edgbaston, B15 2TT \\
United Kingdom}
\date{\today}
\newtheorem{theorem}{Theorem}[section]
\newtheorem{lemma}[theorem]{Lemma}
\newtheorem{claim}[theorem]{Claim}
\newtheorem{corollary}[theorem]{Corollary}
\newtheorem{remark}[theorem]{Remark}
\newcommand{\bin}{\mathrm{Bin}}
\newcommand{\hyp}{\mathrm{Hyper}}
\newcommand{\prb}[1]{\mathbb{P}\left(#1 \right)}
\newcommand{\E}[1]{\mathbb{E} \left[#1 \right]}
\newcommand{\Es}[1]{\mathbb{E}_{\sim} \left[#1 \right]}
\newcommand{\Ens}[1]{\mathbb{E}_{\not\sim} \left[#1 \right]}
\newcommand{\ind}[1]{1_{#1}}
\DeclareMathOperator*{\argmax}{argmax}
\DeclareMathOperator{\mult}{Mult}
\newcommand{\eps}{\varepsilon}
\newcommand{\Ncal}{\mathcal{N}}
\newcommand{\Pcal}{\mathcal{P}}
\newcommand{\Scal}{\mathcal{S}}
\newcommand{\Lcal}{\mathcal{L}}
\newcommand{\Kcal}{\mathcal{K}}
\newcommand{\Bcal}{\mathcal{B}}
\newcommand{\Rcal}{\mathcal{R}}
\newcommand{\Dcal}{\mathcal{D}}
\newcommand{\Mcal}{\mathcal{M}}
\newcommand{\Ev}{\mathcal{E}}
\newcommand{\x}{\boldsymbol{x}}
\newcommand{\y}{\boldsymbol{y}}
\newcommand{\q}{\boldsymbol{q}}
\newcommand{\h}{\boldsymbol{h}}
\newcommand{\cv}{\boldsymbol{c}}
\newcommand{\e}{\boldsymbol{e}}
\newcommand{\un}{\mathbf{\mathrm{I}}}
\begin{document}

\maketitle

\begin{abstract}
We study the evolution of majority dynamics with more than two states on the binomial random graph $G(n,p)$. 
In this process, each vertex has a state in $\{1,\ldots, k\}$, with $k\geq 3$,  
and at each round every vertex adopts state $i$ if it has more neighbours in state $i$ than in any other state. Ties are resolved randomly. 
We show that with high probability the process reaches unanimity in  at most three  rounds, if $np\gg n^{2/3}$. 
\end{abstract}
\noindent

\section{Majority dynamics on a graph}

In this paper, we will study the \emph{synchronous} version of majority dynamics on a graph $G=(V,E)$.  
After each round, every vertex $v\in V$ will be in one of the states in $\Scal_k:= \{1,\ldots, k\}$, for an integer $k\geq 2$.  
 For $t\geq 1$, let $S_t : V \to \Scal_k$ be the  
\emph{state} of the vertices after the completion of the $t$th round, whereas $S_0 : V \to \Scal_k$ denotes the initial state. 
The version of the process we will analyse evolves as follows. Let $t\geq 1$ and for each vertex $v\in V$ and 
$i \in \Scal_k$, let $n_i(v;t-1) =\left| N_{G}(v) \cap \{u : S_{t-1}(u) = i \} \right|$; this is the number of neighbours 
of $v$ in $G$ which are at state $i$ after round $t-1$.  
Then $S_t(v)$ is selected uniformly at random from the set $\argmax \{ n_{i} (v;t-1)\}_{i\in \Scal_k}$. In particular, 
if there is  $i \in \Scal_k$ such that $n_{i} (v;t-1) > n_{j} (v;t-1)$ for all $j \not = i$, that is, 
$\argmax \{ n_{j} (v;t-1)\}_{j\in \Scal_k} = \{i\}$, then $S_t (v)= i$.

Models of this flavour were introduced in other fields several decades ago. 
A prominent example is a model that was introduced by M. Granovetter~\cite{ar:Granno1978}, aiming to 
explain the emergence of collective behaviour through the interaction of individual preferences and their aggregation. More specifically, the aim was to explain social phenomena such as the diffusion of innovations as well as the spread of rumours and diseases. In this setting the choice is among two states, \emph{active} or \emph{inactive}, and each vertex in the network has a different \emph{activation threshold}. 
In particular, a vertex which has threshold $r$ having observed at least $r$ other vertices adopting a new behaviour it adopts it too. The distribution of the thresholds among the members of the population is assumed to have cdf $F$.This process is assumed to take place on the complete graph on $n$ vertices and 
$F(x)$ denotes the fraction of vertices that have threshold at most $xn$. If after round $t$ there is an $r(t)$ fraction of individuals that are active and $F(r(t)) > r(t)$, then $r(t+1)= F(r(t))$. This process reaches an equilibrium which is a fixed point of $F$. 
Processes of similar flavour were also described in~\cite{ar:ClifSud1973,ar:EllFud1993,ar:VenkSanj1998}.

In physics, threshold models where all vertices have the same activation threshold have been used in order to 
explain magnetisation in materials whose molecular structure is described by a lattice. Such is  the class of 
bootstrap processes, which were introduced by Challupa, Leath and Reich~\cite{ar:CLR79} in that 
context, but have generated a long line of research within combinatorics and probability theory. A key difference between these two models and the process that is our main focus is that the former are \emph{monotone}: once a vertex is activated it remains active. However, in the class of processes we will consider a vertex may switch between states an arbitrary number of times.

Another threshold model was developed in the 1940s by McCulloch and Pitts~\cite{ar:McCullochPitts1943} to describe neuronal activity. In this model, a neuron receives positive and negative inputs and it fires, if their sum is positive. Neurons are interconnected forming a large hierarchical network. 

From a purely mathematical perspective, it would make sense to study the behaviour of such a process in a general graph, which may be thought of as a representation of a population.  A simpler nevertheless non-trivial version of the above would be to consider a population represented by a graph and there are two opinions/states available; during each round each vertex adopts an opinion if at least half of the neighbours have adopted it. This modification gives rise to \emph{majority dynamics} with $k=2$ states.  

When two states are available one can adopt a deterministic resolution rule, whereby a vertex does not change 
state if there is a tie among the numbers of neighbours having each one of the two states. In this case, a result of Goles and Olivos~\cite{ar:GolOliv80} implies that if the underlying graph is finite, then the system will eventually become periodic with period at most 2. In other words, there is a limiting behaviour as the number of rounds grows and one may further ask how does this limit look like.

A natural question is about the effect the geometry of the underlying graph has to the 
limiting behaviour.   
Mossel et al.~\cite{ar:MosselNeemanTamuz} considered majority dynamics with two states, say 0 and 1, where each vertex is initially assigned state 0 with probability $1/2 + \delta$, independently of any other vertex, for some 
fixed $\delta >0$.  
They consider a classification of the vertices into \emph{social types}
where two vertices have the same social type if there is an automorphism of the graph that maps one vertex to the other. In other words, vertices of the same social type are indistinguishable within $G$.  Let $m(G)$ be the size of the smallest class of the social types. Their main result shows that the probability that the dynamics arrives at unanimity of one of the two states converges to 1 polynomially fast in $1/ m(G)$ with exponent proportional to $\delta / \log (1/\delta)$.
(The social types are not directly related to the states as the initial assignment of states is independent of the social type.)
On the other hand, they show the existence of a sequence of graphs $(G_n)_{n \in\mathbb{N}}$ on $n$ vertices for which this probability stays bounded away from 1 as $n\to \infty$. In this example, $m(G_n)$ is small and does not tend to infinity. 

The other parameter they considered is the expansion of the underlying graph. In particular, Mossel et al.~\cite{ar:MosselNeemanTamuz} considered a sequence $(G_n)_{n\in \mathbb{N}}$ of $d$-regular graphs which are $\lambda$-expanders (the absolute value of the second largest eigenvalue of their adjacency matrix is no larger than $\lambda$) 
with $\lambda/d \leq 3/16$. 
In majority dynamics with $k\geq 2$ states $\{1,\ldots,k\}$ starting from a random initial configuration where the probability of state $1$ exceeds the probability of state $i\not =1$ by at least $c\log k / \sqrt{d}$, they show that state $1$ eventually dominates and unanimity is achieved with high probability as $n\to \infty$.

Benjamini et al.~\cite{ar:BenjChanmDonnelTamuz2016} considered this question in the context of the binomial 
random graph $G(n,p)$, where each pair of vertices from the set $V_n = \{1,\ldots, n\}$ appears as an edge with probability $p =p(n)$ independently of any other pair and the initial state is selected uniformly at random. 
They conjectured that if $np \to \infty$ as $n\to \infty$, then this dynamics with two states starting from a uniformly random configuration will arrive at almost unanimity with one of the two states occupying at least  
$(1-\eps)n$ of the vertices with high probability as $n\to \infty$, for any $\eps >0$. 
They proved that unanimity is attained for $np\gg n^{1/2}$ with probability that is asymptotically 
bounded away from 0. Later Fountoulakis et al.~\cite{ar:FMK_RSA_2021} showed that this is the case with 
probability tending to $1$ as $n\to \infty$. Moreover, they showed that unanimity occurs in at most 4 rounds. 
This result was refined by Berkowitz and Devlin~\cite{ar:BerkDev_SPA}. They showed that for any given initial configuration, the number of vertices in one of the two states after the execution of one round satisfies a central limit theorem (over the random choice of the graph). Using this result, they obtained more precise results for the number of rounds until unanimity is achieved, when the initial configuration is uniformly random, provided that $np\gg n^{1/2}$.  Tran and Vu~\cite{ar:TranVu2020} also gave another proof of the main theorem of~\cite{ar:FMK_RSA_2021}. 
Furthermore, Chakraborti et al.~\cite{ar:CKLT2021} reduced the value of $p$ for which unanimity is reached by majority dynamics under a uniformly random initial configuration to $p\gg n^{-3/5} \log n$. 
More recently, Tran and Kim~\cite{ar:TranKim25} and independently Jaffe~\cite{ar:Jaffe25} reduced this lower bound to $p \gg n^{-2/3}$.

Tran and Vu~\cite{ar:TranVu2020} also explored non-random initial configurations and, in particular, configurations 
where one of the states has a very small majority over the other. They showed that when the number of vertices in the most popular state is at least 6 more than the number of vertices in the other state, majority dynamics on $G(n,1/2)$ reaches unanimity with the dominating state with probability at least $0.9$. They extended this to $G(n,p)$ for any fixed $p \in (0,1)$ and majority $c_n$ (possibly depending on $n$).  
This result was strengthened by Sah and Sawhney~\cite{ar:SahSawney2020}, who gave a precise estimate on 
the probability that the initial majority of one of the two states becomes dominance of that state, for values of $p$ such that $(\log n)^{-1/16}\leq p \leq 1- (\log n)^{-1/16}$. 

On the other hand, initial configurations with relatively high initial majority have been proved to be reinforced and 
lead to unanimity for much smaller values of $p$. For example, Tran and Vu~\cite{ar:TranVu2020} showed that if 
$np\geq (2+o(1)) \log n$ an initial majority of order $1/p$ is re-enforced to unanimity with high probability as $n\to\infty$. Zehmakan~\cite{ar:Zehmakan2018} had shown this for $np> (1+o(1))\log n$ (that is, just above the connectivity threshold) and an initial majority that is asymptotically larger than $(n/p)^{1/2}$. In the same work, 
Zehmakan refined the results of Mossel et al.~\cite{ar:MosselNeemanTamuz} for $d$-regular expander graphs.  
Earlier versions of this result were shown by G\"artner and Zehmakan in~\cite{ar:GarZeh2018} and in~\cite{ar:GarZeh2017} (where the underlying graph is a grid). 

The asynchronous version of majority dynamics with two states in which vertices update their states one-at-a-time 
was studied by Mohan and Pra{\l}at in~\cite{ar:MoPr2023}. More precisely, at each round a vertex is selected 
uniformly at random and updates their state, where in the case of a tie the vertex does not change state.  
The initial state is not selected uniformly at random but there is a $\delta$-bias towards one of the two states, for some fixed $\delta >0$. 
They show that when the underlying graph is a random graph distributed as $G(n,p)$ with $np \gg \log n$ as $n\to \infty$ but $p=o(1)$, this state dominates. However, this is not the case when $p= \Omega (1)$. They show that the other state may dominate with probability that is asymptotically bounded away from $0$. 

\subsection{Results}
The aim of this contribution is to consider the synchronous process for $k$ states with $k>2$ and study the limiting behaviour of $S_t$ as $t\to\infty$ when the underlying graph is $G(n,p)$, the \emph{binomial random graph} on $n$ vertices.  As it is customary in the theory of random graphs, we will be interested in asymptotics as $n \to \infty$. 
Let $\boldsymbol{\lambda} := (\lambda_1,\ldots, \lambda_k)$ be such that $\sum_{i=1}^k \lambda_i =1$ and $\lambda_i >0$, for $i=1,\ldots, k$; we write that $\boldsymbol{\lambda} \in \triangle_1$.
The initial configuration $S_0$ will be selected randomly, where each $v\in V_n$ will have $S_0(v)=i \in \Scal_k$ with probability $\lambda_i$, independently of all other vertices and the edges of $G(n,p)$. 
We write $S_0 \sim \boldsymbol{\lambda}$ to denote this random choice. 

Our main result is that for any choice of $\boldsymbol{\lambda}\in \triangle_1$, if $p$ is not too rapidly decaying as a function of $n$, then the dynamics will reach unanimity in a bounded number of rounds with high probability.  
\begin{theorem} \label{thm:main} 
Let $\eps >0$ and $\boldsymbol{\lambda} \in \triangle_1$. 
Consider majority dynamics on $G(n,p)$ with $np\gg n^{2/3}$ but $\limsup_{n\to \infty} p < 1$. For any $n$ sufficiently large, 
with probability at least $1-\eps$ the following holds:
if $S_0 \sim \boldsymbol{\lambda}$, 
then $S_3 (v)=s$, for all $v\in V_n$, for some $s\in \Scal_k$. 
\end{theorem}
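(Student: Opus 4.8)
\medskip\noindent\textbf{Proof strategy.}\ The plan is to split on the structure of $\la$ and to show that, on an event of probability at least $1-\eps$, the three rounds achieve the following in turn: round~$1$ kills every state whose initial density is not maximal; round~$2$ turns a tiny plurality among the surviving states into a plurality held by all but $o(n)$ vertices; and round~$3$ mops up the remainder. If $\la$ has a unique largest entry, say $\lambda_1>\lambda_i$ for all $i\neq1$, one round already suffices: exposing the edges at a fixed $v$ shows that the counts $(n_i(v;0))_i$ are \emph{independent} binomials (partition $V_n\setminus\{v\}$ by initial state), so $n_1(v;0)-n_i(v;0)$ has mean $\Theta(np)$ and standard deviation $O(\sqrt{np})$; Chernoff plus a union bound over the at most $kn$ pairs $(v,i)$ give $S_1\equiv1$ with probability $1-n^{-\omega(1)}$. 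Henceforth assume $\lambda_1=\dots=\lambda_m=\lambda_{\max}>\lambda_j$ for all $j>m$, with $m\ge2$; we may also assume $p\le\gamma$ for a small constant $\gamma$, since if $p$ stays bounded away from $0$ the argmax probabilities are bounded away from $1/m$ already after round~$1$ and the plurality is $\Theta(n)$, which only eases the argument.

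Fix $\eps>0$ and write $A_i:=|\{v:S_0(v)=i\}|$. By the central limit theorem for the multinomial vector $(A_1,\dots,A_k)$ we may pick constants $C=C(\eps),\delta=\delta(\eps)$ so that, with probability $\ge1-\eps/3$, $|A_i-\lambda_i n|\le C\sqrt n$ for all $i$ and, after relabelling, $A_1>A_2\ge\dots\ge A_m$ with $A_1-A_2\ge\delta\sqrt n$; call this event $\mathcal{G}_0$ and condition on an $S_0$ in it. Round~$1$ is analysed vertex by vertex as above. States $j>m$ disappear with probability $1-n^{-\omega(1)}$, their counts trailing the top $m$ by $\Omega(np)\gg\sqrt{np}$. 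For $i\le m$, approximating the independent binomials $n_\ell(v;0)$ by Gaussians and Taylor-expanding the ``argmax of $m$ Gaussians'' probability about the symmetric point --- legitimate because the mean shifts, of order $\sqrt p\,(A_i-\bar A)/\sqrt n=\Theta(\sqrt p)=o(1)$, are small --- gives $\E{B_i\mid S_0}=\tfrac nm+c_m\sqrt{np}\,(A_i-\bar A)+O\!\big(np+\sqrt{n/p}\big)$, where $c_m>0$, $\bar A=\tfrac1m\sum_{\ell\le m}A_\ell$ and $B_i:=|\{v:S_1(v)=i\}|$. The only coupling between two vertices $u,v$ in round~$1$ is the edge $uv$, which shifts each side by $O(1/\sqrt{np})$; hence $\Cov\big(\mathbbm{1}\{S_1(u)=i\},\mathbbm{1}\{S_1(v)=i\}\mid S_0\big)=O(1/n)$ and $\Var(B_i\mid S_0)=O(n)$. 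Using $np\gg n^{2/3}$ to absorb the error terms, we conclude: on an event $\mathcal{G}_1\subseteq\mathcal{G}_0$ of probability $\ge1-\eps/2$, after round~$1$ every vertex lies in $\{1,\dots,m\}$, $|B_i-n/m|=O(n\sqrt p)$, and $B_1-B_i\ge c'\delta\,n\sqrt p$ for every $i\neq1$.

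The core is round~$2$, where the obstacle is that $G$ has been ``used up'': $S_1$ is a deterministic function of $G$ and $S_0$, so nothing is left to randomise over in $n_i(v;1)$. To recover some randomness I would fix $v$ and expose only $G-v$ (the edges avoiding $v$); this fixes the state $\hat S_1(u)$ each $u\neq v$ would have after one round with the edge $uv$ deleted, and the counts $\hat B_i:=|\{u\neq v:\hat S_1(u)=i\}|$, which obey the same estimates as the $B_i$. Exposing the edges at $v$, the vector $\big(\,|\{u\in N(v):\hat S_1(u)=i\}|\,\big)_i$ is a vector of \emph{independent} binomials $\bin(\hat B_i,p)$, and $n_i(v;1)$ differs from it by an error of absolute value $\le R_v:=|\{u\in N(v):\text{the edge }uv\text{ flips the argmax at }u\}|$. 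A flip forces a near-tie at $u$ involving the colour $S_0(v)$; only $O(\sqrt{n/p})$ vertices carry such a near-tie and they are ``spread out'', so $R_v\le\bin\big(O(\sqrt{n/p}),p\big)=O(\sqrt{np})$ with probability $1-n^{-\omega(1)}$ uniformly in $v$ (using $\sqrt{np}\gg\log n$). Hence for $i\neq1$ the difference $n_1(v;1)-n_i(v;1)$ has conditional mean $p(\hat B_1-\hat B_i)\ge c'\delta\,np^{3/2}$ and total fluctuation (binomial noise plus $2R_v$) of order $\sqrt{np}$; as $np^{3/2}/\sqrt{np}=\sqrt n\,p\to\infty$ when $np\gg n^{2/3}$, Chernoff gives $\prb{S_2(v)\neq1\mid\mathcal{G}_1}\le k\exp\big(-\Omega(np^2)\big)=\exp\big(-\Omega(n^{1/3})\big)$. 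A first-moment bound and Markov's inequality then give: conditionally on $\mathcal{G}_1$, with probability $1-o(1)$ at most $\eta n$ vertices are not in state~$1$ after round~$2$, for any prescribed constant $\eta>0$ (a union bound over $v$ in fact forces $S_2\equiv1$, but the weaker statement suffices).

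Round~$3$ is a mop-up: once at least $(1-\eta)n$ vertices hold colour~$1$ with $\eta$ small, the same $G-v$ exposure gives, for every $v$, $n_1(v;2)\ge(1-2\eta)np>np/2>n_j(v;2)$ for all $j\neq1$ with probability $1-n^{-\omega(1)}$, so a union bound over $v$ (using $np\gg n^{2/3}\gg\log n$) forces $S_3\equiv1$. Collecting the failure probabilities --- at most $\eps/2$ from $\mathcal{G}_1$ and $o(1)$ from rounds~$2$ and~$3$ --- gives $\prb{S_3\equiv s \text{ for some }s}\ge1-\eps$, as claimed. The assumption $\limsup p<1$ enters only to keep $1-p$ bounded away from $0$, ruling out the nearly complete graph where the deterministic ties of round~$1$ would survive. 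I expect the whole difficulty to sit in round~$2$: one must decouple $S_1$ from the edges at a vertex with an error that is small \emph{uniformly} over all vertices, and at the same time show that the merely $\Theta(n\sqrt p)$-sized plurality created in round~$1$ dominates that error and the binomial noise by a factor tending to infinity --- and it is precisely this that the hypothesis $np\gg n^{2/3}$ is there to guarantee.
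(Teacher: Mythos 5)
Your overall three-round skeleton matches the paper: round~1 kills the non-maximal states and produces an $\Omega(n\sqrt p)$-scale plurality among $\Mcal_0$ (this is the content of Claims~\ref{clm:only_M_0}, \ref{clm:residual_set}, the LLT Lemma~\ref{lem:llt}, the anti-concentration Lemma~\ref{lem:anti_conc}, the key estimate Lemma~\ref{lem:conclusion}, and the second-moment bound~\eqref{eqn:X_0Bounds}); round~2 promotes this plurality to near-unanimity; round~3 mops up. Your round~1 treatment is less precise than the paper's (a Taylor expansion of ``argmax of $m$ Gaussians'' rather than the exact exchange-of-coordinates bijection in the proof of Lemma~\ref{lem:int_decomp}, and you additionally assume $p\le\gamma$ to suppress the $O(np)$ term), but it is the same idea and the paper's weaker variance bound $\Var(X_i)=O(n^{3/2})$ already suffices where you invoke Chebyshev, so these are not essential differences.

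The real divergence is round~2, and that is where I see a genuine gap. The paper does not decouple the edges at $v$ from $G-v$ at all: Lemma~\ref{lem:nbds_conc} is a \emph{deterministic} statement, proved by a union bound over all $2^{O(n)}$ partitions of $V_n$ with the given part sizes and over all ``exceptional'' vertex sets $S$ of size $np/\sqrt\omega$, that all but $np/\sqrt\omega$ vertices have $d_{P_i}(v)-d_{P_{i+1}}(v)>\tfrac{\delta}{4}np^{3/2}$ for \emph{any} admissible partition. The hypothesis $np\gg n^{2/3}$ enters exactly so that the per-partition failure probability $e^{-\Omega(\omega^2n)}$ beats the $2^{O(n)}$ union bound. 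This completely sidesteps the ``used-up randomness'' problem.

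Your alternative --- expose $G-v$, note that $(|N(v)\cap\{\hat S_1=i\}|)_i$ is then a vector of independent binomials, and bound the flip count $R_v$ --- is an attractive and genuinely different idea, but the crucial step is exactly the one you wave at: you assert $R_v\le\bin(O(\sqrt{n/p}),p)$ \emph{with probability $1-n^{-\omega(1)}$ uniformly in $v$}, which is not supported. The size $|T_v|$ of the near-tie set is a sum of non-independent indicators; what the LLT argument of Claim~\ref{clm:residual_set} gives is $\E|T_v|=O(\sqrt{n/p})$ and then only a Markov-quality bound $\prb{|T_v|>C\sqrt{n/p}}\le 1/C$, far short of $n^{-\omega(1)}$. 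Retreating to the first-moment/Markov version for round~2 then runs into a conditioning obstacle: the plurality event $\mathcal G_1$ is $G$-measurable (it involves edges at $v$), so when you condition on $G-v$ and want $\hat B_1-\hat B_i\gtrsim n\sqrt p$, you must either (i) pass to a $G-v$-measurable surrogate and re-establish the estimates there, and (ii) control $|T_v|$ uniformly over $v$ without destroying the conditional structure (a containment $T_v\subseteq\{u:\text{$u$ has a near-tie within $\pm2$ in }G\}$ would help with the uniformity, but it trades a $G-v$-measurable quantity for a $G$-measurable one and re-introduces the circularity). None of this is carried out. A further warning sign: as you have written it, round~2 only uses $\sqrt n\,p\to\infty$ (i.e.\ $p\gg n^{-1/2}$) and $np^2\gg\log n$, which is strictly weaker than $np\gg n^{2/3}$; if the argument were tight as stated it would improve the theorem to the regime the authors explicitly leave open in the conclusion --- strong evidence that something nontrivial has been skipped. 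The decoupling route may well be completable, but as written the control of $R_v$ and the interaction between conditioning on $\mathcal G_1$ and exposing $G-v$ are genuine gaps; the paper's union bound over partitions is the device that makes round~2 go through cleanly.
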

The lower bound on $p$ appears also in the context of the analysis of the label propagation algorithm on the 
binomial random graph by Kiwi et al.~\cite{ar:KLMP2023}. This is an algorithm whose purpose is community 
detection in a network. Initially, each vertex selects a label from $[0,1]$ uniformly at random independently of any 
other vertex. (So with probability 1, all vertices begin with different labels, but this may not be the case in subsequent rounds.) Thereafter, 
the labels are updated 
synchronously by adopting the label that appears most frequently in the neighbourhood of each vertex, but 
resolving ties towards the smallest label, in the first round, and randomly in subsequent rounds. Among other results, the authors show that if $np\gg n^{2/3}$, 
then a.a.s. the smallest label dominates the entire random graph.  

\subsection{Outline of the proof and the structure of this work}

The proof of Theorem~\ref{thm:main} relies on a careful analysis of the first round.  
\begin{enumerate}
\item We begin with showing that with high probability during the first round all states but those which are in $\Mcal_0:=\argmax \{\lambda_i\}_{i\in \Scal_k}$ are eliminated - this is Claim~\ref{clm:only_M_0}. 
We do this using a first moment argument, showing that with high probability for any two states $i, j \in \Scal_k$ with $\lambda_i < \lambda_j$,  all vertices  have more neighbours at state $j$ in $S_0$ than in state $i$.

\item  Thereafter, for any vertex $v\in V_n$ and any $i \in \Mcal_0$, we estimate the 
probability that $S_1 (v) = i$. To this end we show first that with high probability a small number of 
vertices face a tie during the execution of the first round (Claim~\ref{clm:residual_set}).  
So most vertices adopt their state $i \in \Mcal_0$ in round 1 through a clear majority within the corresponding subset of vertices that have been set at state $i$ initially. 
For such a vertex $v$ we say that $S_1(v)=i$ in \emph{a strong sense}.

\item  We estimate the probability that  $S_1(v)=i$ in a strong sense in Section~\ref{sec:exp_round_one}. In particular, we estimate this probability conditional on the size of the neighbourhood of $v$ among the vertices who are initially in a state in 
$\Mcal_0$. The main tool that we deploy here is a local limit theorem (Lemma~\ref{lem:llt}), through which we obtain an asymptotic estimate on the probability that $v$ has a certain number of neighbours in each subset of vertices that are initially at some state in $\Mcal_0$. 
The other tool that we use is an anti-concentration result (Lemma~\ref{lem:anti_conc}) for the initial configuration, which we prove in Section~\ref{sec:anticonc}. 
The latter states that with high probability any two parts of the initial configuration differ in size by $\Omega (n^{1/2})$. 
We combine these tools in Lemma~\ref{lem:conclusion} to deduce a lower bound on the difference between the probability that 
$v$ takes the state corresponding to the $i$th largest part after the first round in a strong sense and the probability that it takes the state 
which corresponds to the $i+1$st largest part. 
\item Thereafter, a second moment argument in Section~\ref{sec:second_moment} shows that the number 
of vertices that adopt state $i\in \Mcal_0$ is concentrated around its expected value. So after the execution of  the first round, the vertex set is partitioned into as many sets as $|\Mcal_0|$ together with a smaller set that consists of the vertices that face a tie during the first round. 

\item Ordering those parts that correspond to states in $\Mcal_0$ according to their size, we conclude that consecutive parts differ in size by $\Omega (np^{1/2})$. 
The problem that arises at this stage of the proof is that we have exposed all edges of the random graph.
To bypass this issue, Lemma~\ref{lem:nbds_conc} in Section~\ref{sec:after_round_1} uses a union bound in order to show that most vertices will have many neighbours in the largest among those parts and the corresponding state will dominate after the execution of the second round. It is for the union bound to work that we need the assumption that $np\gg n^{2/3}$. This is a general lemma which implies that with high probability for any partition which is similar to the partition we end up with after the first round most vertices will have most of their neighbours inside the largest part. 
\item We have proved that all vertices in $V_n$, except for a subset of size $o(n)$, will have adopted that state. We finish the proof showing that during the third round all vertices will have switched to that state. 
\end{enumerate}

\subsection{Some notation and concentration bounds} 

We denote by $\mathbb{N}_0$ the set of non-negative integers, that is $\mathbb{N}_0= 
\{0\} \cup \mathbb{N}$.  

For a graph $G=(V,E)$ a vertex $v \in V$ and a subset $S \subseteq V$, we let $N_S(v)$ be the set of neighbours of $v$ in $S$. We also set $N_G(v) = N_V (v)$, that is, the set of all neighbours of $v$ in the graph $G$.  
The binomial random graph $G(n,p)$ will have $V_n$ as its vertex set. 

For $i \in \Scal_k$, let  $S_0^{-1}(i) = \{v\in V_n \ : \ S_0(v)=i \}$, and $n_i = |S_0^{-1}(i)|$. 
For two distinct vertices $v,v'\in V_n$, we set $n_{i,v} = |S_0^{-1}(i)\setminus \{v\}|$ and 
$n_{i,v,v'} = |S_0^{-1}(i) \setminus \{v,v'\}|$. 

Furthermore, for any vertex $v\in V_n$, we denote by $n_i(v)$ the number of neighbours of $v$ in 
$S_0^{-1}(i)$, that is, $n_i(v) := |N_{S_0^{-1}(i)}(v)|$. For an arbitrary subset $S \subseteq V$, we let 
$d_S(v) = |N_S (v)|$; we will use the latter primarily in Section~\ref{sec:after_round_1}. 

As  we have already pointed out in the outline of the proof of our main theorem, it is the classes 
$\{S_0^{-1}(i)\}_{i\in \Mcal_0}$ that play a significant role, as every other state will disappear after the first 
round.  We will denote by $S^*$ the set $\cup_{i\in \Mcal_0} S_0^{-1}(i)$ and let $n^* : = |S^*|$. Also, 
we set $S_* = V_n \setminus S^*$. 
For two distinct vertices $v,v'\in V_n$, we set $n_{v}^* = |S^* \setminus \{v\}|$ and 
$n^*_{v,v'} = |S^* \setminus \{v,v'\}|$. 

For a vertex $v \in V_n$, we set $n^*(v) : = |N_{S^*}(v)|$. 
Similarly, we set $n^*_{v}(v') = |N_{S^*\setminus \{v\}} (v')|$ and 
$n^*_{v,v'}(v) = |N_{S^*\setminus \{v,v'\}} (v)|$. In Appendix~\ref{app:second_moment}, we will use the notation $n_{j,v'} (v) = |N_{S_0^{-1}(j)\setminus \{v'\}} (v)|$.

For $n\in \mathbb{N}$ and $p\in [0,1]$, we denote by $\bin (n,p)$ the binomial distribution with parameters $n$ and $p$. For a vector $\q = (q_1,\ldots, q_{k-1})$ such that $(q_1,\ldots, q_{k-1}, 1-\sum_{j<k} q_j)\in \triangle_1$, 
we denote by $\mult (n;\q)$ the multinomial distribution on a collection of i.i.d. random variables that has size $n$, where each member of it takes value $i\in \{1,\ldots, k\}$ with probability $q_i$, if $i < k$, and $1-\sum_{j<k} q_j$, 
for $i=k$. 

We will be using the standard Chernoff bound for the concentration of binomially distributed 
random variables. The inequality we will use follows from Theorem 2.1 in~\cite{bk:JLR2000}.
If $X$ is a random variable such that  $X\sim \bin (n,q)$, then for any $\delta \in (0,1)$ we have 
\begin{equation} \label{eq:Chernoff}
	\prb{|X - nq| \ge \delta nq} \le 2 e^{-\delta^2 nq / 3}.
\end{equation}
A Chernoff-type bound also holds for the hypergeometric distribution $\hyp (n,m;s)$. Suppose that on a groundset $X_N$ of size $N$ a subset $S$ of size $s$ is selected uniformly at random among all subsets of $X_N$ of this size. Let $X_M \subset X_N$ be a subset that has size $M$. 
Then $\hyp (N,M;s)$ is the distribution of the random variable that is the size of $S \cap X_M$. Note that 
$\E{|S\cap X_M|}= s (M/N)$. For the concentration of $|S\cap X_M|$ around its expected value we will use a 
Chernoff-type inequality by Serfling~\cite{ar:Serfling1974} (in fact, a special case of it): 
\begin{equation} \label{eq:Chernoff_hyper}
	\prb{|S\cap X_M|- s (M/N)| \ge \lambda \sqrt{s}} \le 2 e^{-2\lambda^2 / (1-f_s)},
\end{equation}
where $f_s= (s-1)/N$. 

We will also use the local limit theorem for the binomial distribution - see for example the monograph by Petrov~\cite{bk:Petrov1975} and a generalisation of it in~\cite{ar:FMK_RSA_2021}.
\begin{theorem} \label{thm:LLT}
Let $X\sim \bin (n,q)$ for some $q \in (0,1)$. 
Then with $\sigma^2 = nq(1-q)$ we have 
$$\sup_{k\in \mathbb{Z}} \left| \prb{X=k} - \frac{1}{\sqrt{2\pi \sigma^2}} e^{-\frac{(k - np)^2}{2\sigma^2}} \right| = O\left(\frac{1}{\sigma^2}\right). $$
\end{theorem}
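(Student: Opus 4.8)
The plan is to establish the local limit theorem by a direct asymptotic analysis of the point probabilities $\prb{X=k} = \binom{n}{k}q^{k}(1-q)^{n-k}$ through Stirling's formula, handling a ``bulk'' window around the mean separately from the complementary ``tail''. Two harmless reductions come first. If $\sigma^2 = nq(1-q)$ stays bounded the statement is trivial, since its right-hand side is then bounded below by a positive constant while its left-hand side is at most $1$; so I assume $\sigma^2 \to \infty$. Moreover $\sigma^2$ is unchanged under $q\mapsto 1-q$, and the substitution $X\mapsto n-X$ turns the claimed estimate for $\bin(n,q)$ at an integer $k$ into the one for $\bin(n,1-q)$ at $n-k$ (the Gaussian comparison term being symmetric); hence I may assume $q\le 1/2$, so that $nq\ge\sigma^2$ and $n(1-q)\ge\sigma^2$. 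I then fix the threshold $T := \sigma\sqrt{12\log\sigma}$, note that $T = o(\sigma^2) = o(\min\{nq, n(1-q)\})$, and split $\sup_{k\in\mathbb{Z}}$ into the ranges $|k-nq|>T$ and $|k-nq|\le T$.

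On the tail range $|k-nq|>T$ the idea is to bound $\prb{X=k}$ and the Gaussian term separately. The Gaussian term is at most $\frac{1}{\sqrt{2\pi}\,\sigma}\,e^{-T^2/(2\sigma^2)} = O(\sigma^{-7})$. For $\prb{X=k}$, when $0\le k\le 2nq$ I apply the Chernoff bound \eqref{eq:Chernoff} with $\delta = |k-nq|/(nq)\in(0,1)$ (legitimate since $T = o(nq)$), obtaining $\prb{X=k}\le 2e^{-T^2/(3nq)}\le 2\sigma^{-2}$ because $nq\le 2\sigma^2$; the cases $k>2nq$ (where $\prb{X=k}\le 2e^{-\sigma^2/3}$) and $k<0$ (where $\prb{X=k}=0$) are immediate. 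Hence the difference is $O(\sigma^{-2})$ throughout this range.

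The core of the argument is the bulk range $|k-nq|\le T$. Write $x := k-nq$; then $k$ and $n-k$ tend to infinity. Applying Stirling's formula to each of $n!,k!,(n-k)!$ in $\binom{n}{k}$ gives
\[
\log\prb{X=k} = -k\log\frac{k}{nq} - (n-k)\log\frac{n-k}{n(1-q)} + \frac12\log\frac{n}{2\pi k(n-k)} + O\!\left(\frac{1}{nq}+\frac{1}{n(1-q)}\right).
\]
Since $|x|/(nq)$ and $|x|/(n(1-q))$ are $o(1)$ over this range, I expand the two logarithms; the algebraic identity $\frac{1}{nq}+\frac{1}{n(1-q)} = \frac{1}{nq(1-q)} = \frac{1}{\sigma^2}$ collapses the quadratic part into exactly $-\frac{x^2}{2\sigma^2}$, and using $k(n-k) = n\sigma^2\bigl(1 + \tfrac{x(1-2q)}{\sigma^2} - \tfrac{x^2}{n\sigma^2}\bigr)$ the prefactor becomes $\frac12\log\frac{1}{2\pi\sigma^2}$ up to a lower-order correction. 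This yields $\log\prb{X=k} = \frac12\log\frac{1}{2\pi\sigma^2} - \frac{x^2}{2\sigma^2} + E(k)$, where, setting $u := |x|/\sigma$ and using $nq,n(1-q)\ge\sigma^2$ and $n\ge\sigma^2$, the cubic term, the quartic remainder of the expansions, the prefactor correction and the Stirling remainder combine into $|E(k)| \le \frac{1}{\sigma}\bigl(u + \tfrac{1}{6}u^{3}\bigr) + \frac{1}{\sigma^2}\bigl(2 + u^{2} + 2u^{4}\bigr)$; since $u\le\sqrt{12\log\sigma}$ here, $E(k)\to 0$ uniformly. Writing the target difference as $\frac{1}{\sqrt{2\pi\sigma^2}}\,e^{-u^2/2}\bigl(e^{E(k)}-1\bigr)$ and using $|e^{E(k)}-1| = O(|E(k)|)$, I obtain a bound of order $\frac{1}{\sigma^2}\,e^{-u^2/2}\bigl(1+u+u^2+u^3+u^4\bigr)$, and the bulk estimate follows from the elementary fact $\sup_{u\ge 0}e^{-u^2/2}(1+u+u^2+u^3+u^4) < \infty$.

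I expect the one delicate point to be precisely the final estimate of $E(k)$: the tempting shortcut of bounding $|x|\le T$ directly inside $E(k)$ costs a $\mathrm{polylog}(\sigma)$ factor and only gives $O(\mathrm{polylog}(\sigma)/\sigma^2)$, so it is essential to keep the Gaussian damping factor $e^{-u^2/2}$ paired with each power of $u=|x|/\sigma$ and exploit that $u^{j}e^{-u^2/2}$ is bounded. A secondary but necessary check is uniformity of the expansions and of the implied constants; the reduction to $q\le 1/2$ is exactly what keeps the Chernoff exponent and all Taylor coefficients under absolute control, so the final $O(1/\sigma^2)$ holds with a constant independent of $q$. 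As a sanity check one can instead run the classical Fourier-inversion proof (as in Petrov~\cite{bk:Petrov1975}): write $\prb{X=k}$ and the normal density as inverse transforms of $(1-q+qe^{it})^n$ and $e^{-\sigma^2 t^2/2}$ respectively and read off the $O(1/\sigma^2)$ rate from the cubic term in $n\log(1-q+qe^{it})$; the direct Stirling route above is, however, the more self-contained one.
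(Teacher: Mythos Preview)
Your proof is correct. The paper does not actually prove Theorem~\ref{thm:LLT}: it is stated as a quoted tool with a reference to Petrov~\cite{bk:Petrov1975} (and to~\cite{ar:FMK_RSA_2021} for a generalisation), so there is no in-paper argument to compare against. What you have written is a complete, self-contained Stirling-based proof, and you even flag the Fourier-inversion route from~\cite{bk:Petrov1975} as the alternative.

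A couple of minor remarks that do not affect correctness. First, the paper's Chernoff bound~\eqref{eq:Chernoff} is stated for $\delta\in(0,1)$, so when you invoke it at $\delta=1$ for the subcase $k>2nq$ you are, strictly speaking, appealing to the standard extension rather than the exact inequality quoted; since $e^{-nq/3}\le e^{-\sigma^2/3}=o(\sigma^{-2})$ this is harmless. Second, your key observation --- that one must keep the Gaussian damping $e^{-u^2/2}$ paired with the powers of $u$ in $E(k)$ rather than crudely bound $u\le\sqrt{12\log\sigma}$ --- is exactly what makes the rate $O(1/\sigma^2)$ come out without spurious logarithmic factors, and it is good that you call this out explicitly.
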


Finally, we say that a sequence of events $E_{n}$ defined on a sequence of probability spaces with probability measure 
$\mathbb{P}_n$ occurs \emph{asymptotically almost surely} (or \emph{a.a.s.}) if 
$\mathbb{P}_{n}(E_{n}) \to 1$ as $n\to \infty$.

\section{Some observations about the initial configuration} 

\subsection{On the concentration of state classes and the vertex degrees therein} 
The definition of the initial state implies that the vector $(n_1, \ldots, n_k)$ follows the multinomial 
distribution $\mult (n;(\lambda_1,\ldots, \lambda_{k-1}))$.
In particular, $n_i \sim \bin (n,\lambda_i)$ for any $i \in \Scal_k$.

\begin{claim} \label{clm:good_init_state} With probability at least $1-o(n^{-1})$, for all $i \in \Scal_k$ we have $|n_i - n \lambda_i |\leq \log n \sqrt{n}$.
\end{claim}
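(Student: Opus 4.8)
The plan is a direct union bound combining the marginal distribution of each $n_i$ with the Chernoff inequality~\eqref{eq:Chernoff}. Since $S_0 \sim \boldsymbol{\lambda}$ with each vertex assigned state $i$ independently with probability $\lambda_i$, the random variable $n_i = |S_0^{-1}(i)|$ is a sum of $n$ independent Bernoulli$(\lambda_i)$ random variables, so $n_i \sim \bin(n,\lambda_i)$, as already noted in the text.

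For a fixed $i \in \Scal_k$, I would apply~\eqref{eq:Chernoff} with $q = \lambda_i$ and a deviation parameter $\delta_i$ chosen so that $\delta_i\, n\lambda_i = \log n \sqrt{n}$, i.e. $\delta_i = \log n / (\lambda_i \sqrt{n})$. Since $\lambda_i$ is a fixed positive constant (as $\boldsymbol{\lambda} \in \triangle_1$) and $k$ is fixed, for $n$ large enough $\delta_i \in (0,1)$, so the hypothesis of~\eqref{eq:Chernoff} is satisfied. The bound then gives
\[
\prb{|n_i - n\lambda_i| \ge \log n \sqrt{n}} \le 2 e^{-\delta_i^2 n\lambda_i / 3} = 2 \exp\!\left( -\frac{(\log n)^2}{3 \lambda_i} \right).
\]
Since $\lambda_i \le 1$, this is at most $2 \exp\!\left( -(\log n)^2 / 3 \right)$, which decays faster than any polynomial in $n$; in particular it is $o(n^{-2})$.

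Finally I would take a union bound over the $k$ states: the probability that $|n_i - n\lambda_i| > \log n \sqrt n$ for \emph{some} $i \in \Scal_k$ is at most $k$ times the above, which is still $o(n^{-1})$ (indeed $o(n^{-2})$) since $k$ is a fixed constant. This proves the claim. There is no real obstacle here—the only point requiring a word of care is verifying that the deviation parameter $\delta_i$ lies in $(0,1)$ for large $n$ so that~\eqref{eq:Chernoff} applies, which follows immediately from $\lambda_i$ being a fixed positive constant.
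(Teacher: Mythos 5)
Your proof is correct and follows exactly the paper's approach: apply the Chernoff bound~\eqref{eq:Chernoff} to each $n_i \sim \bin(n,\lambda_i)$ to get a $\exp(-\Omega(\log^2 n))$ tail bound, then take a union bound over the fixed number $k$ of states. You simply spell out the choice of deviation parameter $\delta_i$ and the check that $\delta_i \in (0,1)$, which the paper leaves implicit.
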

\begin{proof}
The Chernoff bound~\eqref{eq:Chernoff} 
implies that $\prb{ |n_i - n \lambda_i |> \log n \sqrt{n}} \leq \exp\left(-\Omega(\log^2 n)\right)$.
Now, the union bound over all $i \in \Scal_k$ (as $k$ does not depend on $n$)
implies that a.a.s. for all $i \in \Scal_k$ we have $|n_i - n \lambda_i |\leq \log n \sqrt{n}$. 
\end{proof}
We denote this event $\Ev_0$ and write $S_0 \in \Ev_0$ to say that $\Ev_0$ is realised. (The choice of the $\log$ function is somewhat arbitrary as $k$ is fixed. Here and below, we will not try to make optimal choices, unless we need to do so.)
Let $\lambda_* = \sum_{i\not \in \Mcal_0} \lambda_i$ and, with $k_0 = |\Mcal_0|$, $\lambda^* = \sum_{i \in \Mcal_0} \lambda_i = k_0 \lambda_{\max}$. 
Recall that we let $n^*=|S^*|$.
Claim~\ref{clm:good_init_state} implies that 
with probability $1-o(n^{-1})$,  $\left| n^* - n \lambda^* \right| < k_0 n^{1/2} \log n$. 
We denote this event by $\Pcal_0$. 
(Note that if $\Ev_0$ is realised, then $\Pcal_0$ is realised as well.)

Another important random variable is $n_i(v)$ for $i\in \Scal_k$ and any $v\in V_n$. 
Conditional on an initial configuration $\{ S_0^{-1}(1),\ldots, S_0^{-1}(k)\}$, we have $n_i(v)\sim \bin (n_i - 1_{S_0(v)=i},p)$, where $ 1_{S_0(v)=i}=1$, if $S_0(v)=i$ and 0 otherwise.
\begin{claim} \label{clm:deg_conc_vc}
Conditional on an initial configuration $S_0 \in\Ev_0$,
we have that a.a.s. all vertices $v\in V_n$ have $|n_i(v) - n_i p|\leq \log n \sqrt{np}$ for all $i \in \Scal_k$.
\end{claim}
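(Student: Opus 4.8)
The plan is to bootstrap from the concentration of the state classes to the concentration of neighbourhood sizes within each class, via a Chernoff bound for each fixed vertex and a union bound over all vertices. First I would condition on a fixed initial configuration $S_0 \in \Ev_0$, so that the quantities $n_1, \ldots, n_k$ are now deterministic numbers satisfying $|n_i - n\lambda_i| \leq \log n \sqrt{n}$; in particular $n_i = \Theta(n)$ for every $i$, since each $\lambda_i > 0$ is fixed. With $S_0$ fixed, the only remaining randomness is the edge set of $G(n,p)$, and for a fixed vertex $v \in V_n$ and fixed $i \in \Scal_k$ the edges from $v$ to $S_0^{-1}(i)$ are present independently with probability $p$, so $n_i(v) \sim \bin(n_i - 1_{S_0(v)=i}, p)$ exactly as noted before the claim.

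Next I would apply the Chernoff bound~\eqref{eq:Chernoff} to $n_i(v)$ with the deviation parameter $\delta$ chosen so that $\delta \cdot \E{n_i(v)} \approx \log n \sqrt{np}$; concretely, take $\delta = (\log n \sqrt{np})/(n_i p) = \Theta((\log n)/\sqrt{np})$, which lies in $(0,1)$ for $n$ large since $np \gg n^{2/3} \to \infty$. Then~\eqref{eq:Chernoff} gives
\[
\prb{|n_i(v) - n_i p| \geq \tfrac{1}{2}\log n \sqrt{np}} \leq 2\exp\left(-\Omega\left(\frac{(\log n)^2 np}{n_i p}\right)\right) = 2\exp\left(-\Omega((\log n)^2)\right),
\]
using $n_i = \Theta(n)$ (here I absorb the difference between $n_i(v)$'s actual mean $(n_i - 1_{S_0(v)=i})p$ and $n_i p$, which is at most $p \leq 1$, into the constant and into replacing the target window by, say, $\tfrac12 \log n\sqrt{np}$; the stated bound $\log n\sqrt{np}$ then certainly holds). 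This probability is $o(1/(nk))$, so a union bound over all $n$ vertices $v$ and all $k$ states $i$ (with $k$ fixed, independent of $n$) yields that, conditional on $S_0 \in \Ev_0$, a.a.s. every vertex $v$ satisfies $|n_i(v) - n_i p| \leq \log n \sqrt{np}$ for all $i \in \Scal_k$ simultaneously. This is exactly the assertion.

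I do not expect a genuine obstacle here; the statement is a routine two-level concentration argument of the kind already used in Claim~\ref{clm:good_init_state}. The only point that needs a little care is to make sure the Chernoff deviation parameter $\delta$ is a legitimate value in $(0,1)$ and that the resulting exponent is genuinely $\omega(\log n)$: both follow from $np \gg n^{2/3}$, which in fact gives far more room than needed (anything with $np \to \infty$ would already suffice for this particular claim, the stronger hypothesis being required elsewhere). A secondary bookkeeping point is that the bound should be stated for $n_i(v)$ against $n_i p$ rather than against its exact conditional mean $(n_i - 1_{S_0(v)=i})p$, but the discrepancy is at most $p \leq 1 = o(\log n\sqrt{np})$ and is harmless.
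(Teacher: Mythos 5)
Your proof is correct and follows essentially the same route as the paper's: condition on a configuration satisfying $\Ev_0$, apply the Chernoff bound to each $n_i(v)\sim\bin(n_i-1_{S_0(v)=i},p)$ to get a failure probability of $\exp(-\Omega(\log^2 n))=o(n^{-1})$, and take a union bound over all $v\in V_n$ and $i\in\Scal_k$. The paper phrases the final step as ``union bound and Markov's inequality,'' which is the same bookkeeping, and your extra care about the $O(1)$ shift in the mean is a harmless refinement the paper leaves implicit.
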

\begin{proof}
This 
follows from the Chernoff bound and Markov's inequality. Indeed, conditional on a configuration $\{ S_0^{-1}(1),\ldots, S_0^{-1}(k)\}$ that satisfies $| n_i - n \lambda_i |\leq \log n \sqrt{n}$ for all $i \in \Scal_k$ (that is, which  
satisfies $\Ev_0$), the Chernoff bound yields that for a vertex $v\in V_n$
$\prb{| n_i(v)  - n_i p| >  \log n \sqrt{np}} = \exp \left(-\Omega (\log^2 n)\right) = o(n^{-1})$. 
The union bound (over all vertices and states $i\in \Scal_k$) and Markov's inequality imply that a.a.s. all vertices $v\in V_n$ satisfy $|n_i(v) - n_i p|\leq \log n \sqrt{np}$.
\end{proof}

Recall also that $N_{S^*}(v)$ denotes the neighbourhood
of $v$ in $S^*$ and $n_v^*= |S^*\setminus \{v\}|$. 
Then the random variable $n^* (v) = |N_{S^*} (v)|$ has $n^* (v) \sim \bin (n_v^*,p)$. 
We will consider the event 
$\Ncal^*_{v}:= \{|n^* (v)- n_v^* p|\leq k_0 \log n \cdot (np)^{1/2}\}$. 


With a slight abuse of notation, for an event $\mathcal{E}$ and $\ind{\mathcal{E}}$ denoting its indicator function, and random variables $X_1,\ldots , X_m$ we write,
\[
\mathbb{P}\left[\mathcal{E} | X_1,\ldots, X_m \right] = \E{\ind{\mathcal{E}} | X_1,\ldots, X_m}.
\] 
We saw that on the event $\Ev_0$ we have $|n^* - n \lambda^*| <  k_0 n^{1/2} \log n$. 
So Claim~\ref{clm:deg_conc_vc} and its proof yield that on the event $\Ev_0$ 
\begin{equation} \label{eq:Nv_prob}
\prb{\Ncal^*_{v} | n^*} = 1-o(n^{-1}). 
\end{equation}

We next discuss a bound on the (conditional) probability that a vertex $v$ has the same number of neighbours in two 
parts $S_0^{-1}(i)$ and $S_0^{-1}(j)$, for $i\not = j$. This will be useful as any vertex which ends up choosing its state during 
round 1 through the resolution of a tie should have the same number of neighbours in at least two parts. 
Let $T_1$ denote the set of vertices that face a tie during round 1.

\begin{claim} \label{clm:residual_set}
The set $T_1 \subset V_n$  a.a.s. has
$$ |T_1| \leq \log^2 n \sqrt{\frac{n}{p}}.$$
\end{claim}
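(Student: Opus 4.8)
The plan is to bound $|T_1|$ by a first-moment (Markov) argument applied to the random variable $|T_1| = \sum_{v\in V_n} \ind{v\in T_1}$, so it suffices to show that for every fixed vertex $v$,
\[
\prb{v\in T_1} = O\!\left(\frac{1}{\sqrt{np}}\cdot\frac{\log^2 n}{n}\cdot n\right)\Big/ n,
\]
more precisely $\prb{v\in T_1} = O\big((np)^{-1/2}\log n\big)$, from which $\E{|T_1|} = O\big(n (np)^{-1/2}\log n\big)$ and Markov's inequality (losing an extra $\log n$ factor, say) delivers the claimed bound a.a.s. The event $v\in T_1$ says that $\argmax\{n_i(v)\}_{i\in\Scal_k}$ has size at least two, which in particular forces $n_i(v) = n_j(v)$ for some pair $i\neq j$ with this common value being the maximum; I will simply union-bound over the $\binom{k}{2}$ pairs and estimate $\prb{n_i(v) = n_j(v)}$ for a fixed pair.

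First I would condition on the initial configuration $S_0$, and in fact restrict to $S_0\in\Ev_0$ (which holds with probability $1-o(n^{-1})$ by Claim~\ref{clm:good_init_state}); on the complementary event I just bound the contribution to the expectation trivially by $o(1)$. Conditional on $S_0$, the neighbourhoods $N_{S_0^{-1}(i)}(v)$ for distinct $i$ are determined by disjoint sets of edges, hence $n_i(v)$ and $n_j(v)$ are independent with $n_i(v)\sim\bin(n_i - \ind{S_0(v)=i},p)$ and similarly for $j$. Then
\[
\prb{n_i(v)=n_j(v)\mid S_0} = \sum_{\ell\ge 0}\prb{n_i(v)=\ell\mid S_0}\,\prb{n_j(v)=\ell\mid S_0} \le \max_{\ell}\prb{n_i(v)=\ell\mid S_0},
\]
so the whole thing reduces to a point-probability (anti-concentration) bound for a single binomial. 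By the local limit theorem (Theorem~\ref{thm:LLT}), since on $\Ev_0$ we have $n_i = \Theta(n)$ and $p$ bounded away from $1$, the standard deviation is $\sigma_i = \sqrt{n_i p(1-p)} = \Theta(\sqrt{np})$, whence $\max_\ell \prb{n_i(v)=\ell\mid S_0} = O\big(\sigma_i^{-1}\big) = O\big((np)^{-1/2}\big)$. (If one wants to avoid invoking the LLT here, the same $O((np)^{-1/2})$ point bound for a binomial follows from a direct estimate on $\binom{m}{\ell}p^\ell(1-p)^{m-\ell}$ via Stirling at the mode, but quoting Theorem~\ref{thm:LLT} is cleanest.) Summing over the finitely many pairs $\{i,j\}$ gives $\prb{v\in T_1\mid S_0} = O\big((np)^{-1/2}\big)$ uniformly over $S_0\in\Ev_0$; taking expectations over $S_0$ and then over $v$ yields $\E{|T_1|} = O\big(n(np)^{-1/2}\big) = O\big(\sqrt{n/p}\big)$.

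Finally, Markov's inequality gives $\prb{|T_1| > \log^2 n\sqrt{n/p}} \le \E{|T_1|}/(\log^2 n\sqrt{n/p}) = O(1/\log^2 n) = o(1)$, so $|T_1| \le \log^2 n\sqrt{n/p}$ a.a.s., as claimed. The main point — and the only place requiring genuine care — is the anti-concentration step: one must make sure the point mass of $\bin(n_i-\ind{},p)$ is genuinely $O((np)^{-1/2})$ uniformly, which needs both $n_i p\to\infty$ (guaranteed since $np\gg n^{2/3}$ and $\lambda_i$ is a fixed constant, so $n_i = \Theta(n)$ on $\Ev_0$) and $p$ bounded away from $1$ (guaranteed by the hypothesis $\limsup p < 1$, which makes $1-p$ bounded below so $\sigma_i^2 = \Theta(np)$). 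Everything else is a routine union bound plus Markov; there is no real obstacle, and in particular the argument does not even use the full strength of $np\gg n^{2/3}$ — it would go through for any $np\to\infty$.
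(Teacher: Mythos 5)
Your proof is correct and follows the same overall strategy as the paper (condition on $\Ev_0$, use conditional independence of $n_i(v)$ and $n_j(v)$, estimate the collision probability via anti-concentration of a binomial, union-bound over the $O(k^2)$ pairs, then Markov). The one place you genuinely diverge is the anti-concentration step: you observe that $\prb{n_i(v)=n_j(v)\mid S_0}=\sum_\ell \prb{n_i(v)=\ell\mid S_0}\prb{n_j(v)=\ell\mid S_0}\le \max_\ell \prb{n_i(v)=\ell\mid S_0}=O((np)^{-1/2})$, which is a cleaner and slightly sharper bound. The paper instead first restricts both $n_i(v)$ and $n_j(v)$ to the interval of width $O(\log n\sqrt{np})$ around their means (via Chernoff), then multiplies two LLT point-probability bounds of order $1/(np)$ each and sums over the $O(\log n\sqrt{np})$ admissible values, arriving at the weaker estimate $O(\log n/(np)^{1/2})$. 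Your version saves a $\log n$ factor in $\E{|T_1|}$; both routes comfortably give the claimed bound on $|T_1|$ after Markov, and your closing remark that this step only needs $np\to\infty$ (not the full $np\gg n^{2/3}$) is accurate and matches where the stronger hypothesis is actually used later in the paper, namely in Lemma~\ref{lem:nbds_conc}.
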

\noindent
We call this event $\Ev_{T_1}$.
\begin{proof}
Note that conditional on a realisation of $\{S_0^{-1}(1),\ldots, S_0^{-1}(k)\}$ the random variables $n_\ell (v)\sim \bin (n_\ell - 1_{S_0(v)=\ell},p)$, for $\ell \in \{i,j\}$, are independent. 
In particular, we condition on  a realisation of $\{S_0^{-1}(1),\ldots, S_0^{-1}(k)\}$ that satisfies $\Ev_0$; by Claim~\ref{clm:good_init_state} the probability that this does not occur is $o(n^{-1})$. 
To bound the (conditional) probability that $n_i (v) = n_j (v)$ we will make use of the local limit theorem (Theorem~\ref{thm:LLT}). 
Firstly, the proof of Claim~\ref{clm:deg_conc_vc} implies that with probability $1-o(n^{-1})$ we have 
$|n_\ell (v) - (n_\ell - 1_{S_0(v)=\ell})p|\leq \log n \sqrt{np}$ for $\ell \in \{i,j\}$. 
 Theorem~\ref{thm:LLT} implies that for $s_i \in \mathbb{N}$ such that $|s_i -  (n_i - 1_{S_0(v)=i})p|\leq \log n \sqrt{np}$ 
we have that 
\begin{eqnarray*}
\prb{n_i(v)= n_j(v) =s_i \mid S_0} =
\prb{n_i (v)=s_i \mid S_0} 
\prb{n_j (v)=s_i \mid S_0}= O\left(\frac{1}{np}\right).
\end{eqnarray*}
Moreover, this upper bound is uniform over these choices of $s_i$. As there are $O(\log n \sqrt{np})$ choices for the value 
of $s_i$ we conclude that 
\begin{equation} \label{eq:prb_eq_nbs}\prb{n_i(v)= n_j(v)} = O\left(\frac{\log n}{(np)^{1/2}}\right)+o(n^{-1}) = O\left(\frac{\log n}{(np)^{1/2}}\right).
\end{equation}
It follows that 
\begin{eqnarray*} \E{|T_1|} &\leq \sum_{v \in V_n} \prb{\exists i\not =j \ n_i (v)=n_j(v)}\leq 
 \sum_{v \in V_n} \sum_{i\not = j \in \Scal_k} \prb{n_i (v)=n_j (v)}  \\
&\stackrel{\eqref{eq:prb_eq_nbs}}{=} O\left( k^2 \log n\sqrt{\frac{n}{p}}\right) = O\left( \log n\sqrt{\frac{n}{p}}\right).
 \end{eqnarray*}
Markov's inequality then implies the claim.
\end{proof}

Recall that we set $\Mcal_0 = \argmax \{\lambda_i\}_{i \in \Scal_k}$ and now let $\lambda_{\max} = \max \Scal_k$; thus, any $i \in \Mcal_0$ has $\lambda_i = \lambda_{\max}$. 
We will show that all states outside $\Mcal_0$ are eliminated in the first round and only the states in 
$\Mcal_0$ survive a.a.s. This means that, with high probability, what determines $S_1(v)$, for any $v \in V_n$ are the random variables $\{n_i(v)\}_{i \in \Mcal_0}$.
\begin{claim} \label{clm:only_M_0}
 A.a.s. all $v\in V_n$ have  $S_1(v) \in \Mcal_0$.
\end{claim}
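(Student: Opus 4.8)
The plan is to prove this via a first moment (union bound) argument, exactly as signalled in item (1) of the outline. Fix any two states $i,j\in\Scal_k$ with $\lambda_i<\lambda_j$. It suffices to show that a.a.s. every vertex $v\in V_n$ has $n_i(v)<n_j(v)$; then no vertex can ever adopt a state outside $\Mcal_0$ in round $1$, since for any $\ell\notin\Mcal_0$ and any $m\in\Mcal_0$ we have $\lambda_\ell<\lambda_m$, so $n_\ell(v)<n_m(v)$ and $\ell\notin\argmax\{n_r(v)\}_{r\in\Scal_k}$. Since there are only $\binom{k}{2}$ pairs $(i,j)$ and $k$ is fixed, a union bound over pairs reduces everything to a single fixed pair with $\lambda_i<\lambda_j$.

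For a fixed such pair, I would condition on the initial configuration lying in $\Ev_0$, which by Claim~\ref{clm:good_init_state} fails with probability $o(n^{-1})$. On $\Ev_0$ we have $n_j-n_i\geq n(\lambda_j-\lambda_i)-2\sqrt{n}\log n=:\Delta$, and since $\lambda_j-\lambda_i$ is a positive constant, $\Delta=\Omega(n)$. Now fix $v$ and condition further on $S_0$; the variables $n_i(v)\sim\bin(n_i-1_{S_0(v)=i},p)$ and $n_j(v)\sim\bin(n_j-1_{S_0(v)=j},p)$ are independent. By the Chernoff bound~\eqref{eq:Chernoff} applied to each, a.a.s.\ (with failure probability $\exp(-\Omega(\log^2 n))=o(n^{-1})$, as in the proof of Claim~\ref{clm:deg_conc_vc}) we have $n_i(v)\le n_ip+\sqrt{np}\log n$ and $n_j(v)\ge n_jp-\sqrt{np}\log n$. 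Hence on these events
\[
n_j(v)-n_i(v)\ \ge\ (n_j-n_i)p-2\sqrt{np}\log n\ \ge\ \Delta p-2\sqrt{np}\log n\ =\ \Omega(np),
\]
using $np\to\infty$ (indeed $np\gg n^{2/3}$) so that the $\Omega(np)$ main term dominates the $O(\sqrt{np}\log n)$ error. In particular $n_j(v)>n_i(v)$. Taking a union bound over the $n$ vertices and the $o(n^{-1})$ per-vertex failure probability, and then over the $O(1)$ pairs $(i,j)$ and the $o(n^{-1})$ failure probability of $\Ev_0$, gives that a.a.s.\ every $v\in V_n$ has $n_i(v)<n_j(v)$ for every pair with $\lambda_i<\lambda_j$, whence $S_1(v)\in\Mcal_0$ for all $v$.

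There is no serious obstacle here: the argument is a routine first-moment/union-bound computation, and all the concentration tools (Chernoff, the event $\Ev_0$) are already in place. The only point requiring a modicum of care is bookkeeping the independence — one must condition on $S_0$ before invoking independence of $n_i(v)$ and $n_j(v)$, and then handle the (negligible) event that $S_0\notin\Ev_0$ separately — but this is exactly the pattern already used in the proofs of Claim~\ref{clm:deg_conc_vc} and Claim~\ref{clm:residual_set}. It is worth noting that this step is quite lossy in $p$: it only needs $np\gg\log^2 n$ or so, far below the $n^{2/3}$ threshold, confirming the outline's remark that the binding constraint $np\gg n^{2/3}$ enters much later, in the union bound of Lemma~\ref{lem:nbds_conc}.
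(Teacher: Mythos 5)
Your proof is correct and follows essentially the same route as the paper's: condition on $\Ev_0$ so that $n_j - n_i = \Omega(n)$ for the relevant pairs, then apply the Chernoff bound and a union bound over vertices and pairs to conclude $n_j(v) > n_i(v)$ for all $v$. The paper's version phrases the comparison as being between any $j \notin \Mcal_0$ and $i \in \Mcal_0$ rather than over all pairs with $\lambda_i < \lambda_j$, but this is a cosmetic difference; the argument is the same.
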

\begin{proof}
Firstly, let us observe that Claim~\ref{clm:good_init_state} implies that there exists $\eps>0$ such that a.a.s. for any $j \not \in \Mcal_0$ and any $i\in \Mcal_0$, we have  $\frac{1}{n} \left( n_i - n_j \right) >\eps$. 

Let us condition on a realisation of $\{S_0^{-1}(1),\ldots, S_0^{-1}(k)\}$ such that the above event is realised as well. 
A simple application of the Chernoff bound (cf. the proof of Claim~\ref{clm:deg_conc_vc}) together with the union bound imply that a.a.s. 
all vertices $v\in V_n$ have 
$n_j (v) < n_i(v )$ for any $j \not \in \Mcal_0$ and any $i \in \Mcal_0$. Thus, the claim holds. 
\end{proof}

%

Another useful bound will be on the absolute difference $|n_i (v)- n^*(v) (n_i/n^*)|$, having set $n^*(v)=|N_{S^*}(v)|$, conditional on a realisation of $S_0 \in \Ev_0$. 
Conditioning on $S_0\in \Ev_0$ and 
the value of $n^*(v)$, the neighbourhood of $v$ in  $S^*$ can be any subset of $S^*$ of size 
$n^*(v)$ with the same probability. Therefore, for any $i\in \Mcal_0$ we have 
$n_i(v) \sim \hyp (n^*,n_i;n^*(v))$. 
\begin{claim} \label{clm:hypergeom_no_large_cs}
Let $v\in V_n$. 
On the events $\Ev_0$ and $\Ncal^*_{v}$ we have 
\begin{equation*}
\prb{|n_i (v)- n^*(v) (n_i/n^*)|\geq \log n\sqrt{n^*(v)}  \mid S_0, n(v)^*} =\exp \left(-\Omega (\log^2 n) \right). 
\end{equation*}
for all $i \in \Mcal_0$. 
\end{claim}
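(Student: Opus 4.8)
The plan is to recognise $n_i(v)$, under the stated conditioning, as a hypergeometric random variable and then invoke Serfling's Chernoff-type inequality~\eqref{eq:Chernoff_hyper}. As observed immediately before the statement, conditional on $S_0\in\Ev_0$ and on the value of $n^*(v)$, the neighbourhood $N_{S^*}(v)$ is equally likely to be any subset of $S^*$ of size $n^*(v)$; hence $n_i(v)=|N_{S^*}(v)\cap S_0^{-1}(i)|\sim\hyp(n^*,n_i;n^*(v))$ and $\E{n_i(v)\mid S_0,n^*(v)}=n^*(v)(n_i/n^*)$. I would then apply~\eqref{eq:Chernoff_hyper} with groundset size $N=n^*$, distinguished subset of size $M=n_i$, sample size $s=n^*(v)$, and $f_s=(n^*(v)-1)/n^*$, taking $\lambda=\log n$ so that $\lambda\sqrt{s}=\log n\sqrt{n^*(v)}$; this yields
$$\prb{|n_i(v)-n^*(v)(n_i/n^*)|\ge \log n\sqrt{n^*(v)}\mid S_0,n^*(v)}\ \le\ 2\exp\!\left(-\frac{2\log^2 n}{1-f_s}\right).$$

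It then remains to show that $1-f_s$ is bounded away from $0$, uniformly in $n$, on the events $\Ev_0$ and $\Ncal^*_{v}$. On $\Ev_0$ the event $\Pcal_0$ holds, so $|n^*-n\lambda^*|<k_0 n^{1/2}\log n$; since $\lambda^*=k_0\lambda_{\max}>0$ this gives $n^*\ge \tfrac12 n\lambda^*$ for $n$ large, i.e. $n^*=\Theta(n)$. On $\Ncal^*_{v}$ we have $n^*(v)\le n_v^* p+k_0\log n\sqrt{np}\le n^* p+k_0\log n\sqrt{n}$, using $n_v^*\le n^*$ and $p\le 1$. Since $\limsup_{n\to\infty}p<1$, there is a fixed $\delta>0$ with $p\le 1-\delta$ for all large $n$, and since $k_0\log n\sqrt{n}=o(n)=o(n^*)$, we obtain $n^*(v)/n^*\le 1-\delta+o(1)\le 1-\delta/2$ for $n$ large. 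Hence $f_s=(n^*(v)-1)/n^*\le 1-\delta/2$ and $1/(1-f_s)\le 2/\delta=O(1)$, so the displayed bound is at most $2\exp(-\delta\log^2 n)=\exp(-\Omega(\log^2 n))$. Since $|\Mcal_0|=k_0$ is a constant, a union bound over $i\in\Mcal_0$ shows the estimate holds simultaneously for all $i\in\Mcal_0$ at the cost of a constant factor absorbed into the $\Omega(\cdot)$.

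I do not expect a genuine obstacle. The only step requiring care is the bound on $f_s$: this is exactly where $S_0\in\Ev_0$ is used (to guarantee $n^*=\Theta(n)$) and where $\Ncal^*_{v}$ together with $\limsup_{n\to\infty} p<1$ are used (to keep $n^*(v)$ away from $n^*$, which can fail only when $p$ is close to $1$). Everything else is a direct substitution into~\eqref{eq:Chernoff_hyper}.
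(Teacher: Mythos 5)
Your proof is correct and follows essentially the same route as the paper: identify $n_i(v)$ as hypergeometric conditional on $S_0$ and $n^*(v)$, then apply Serfling's inequality~\eqref{eq:Chernoff_hyper} with $\lambda=\log n$ and bound $1-f_s$ away from zero using $\Ncal^*_v$ and $\limsup p<1$. (Both your argument and the paper's spend effort bounding $1-f_s$ away from $0$, but since $1/(1-f_s)\geq 1$ always, the bound $2e^{-2\lambda^2/(1-f_s)}\leq 2e^{-2\lambda^2}=\exp(-\Omega(\log^2 n))$ is in fact automatic; this extra step is harmless but unnecessary.)
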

\begin{proof}
For a given vertex $v\in V_n$, we condition on the values of $n_i, n_v^*$, so that $\Ev_0$ (and therefore $\Pcal_0$) are satisfied, and $n^*(v)$ being such that $\Ncal_v^*$ is satisfied. But $|n_v^* - n^*|\leq 1$. 
So on $\Ncal_v^*$ we have $n^*(v)/n_v^* \sim n^*(v)/n^* \sim p$. 
We apply the Chernoff-type bound for the hypergeometric distribution~\eqref{eq:Chernoff_hyper} with 
$\lambda = \log n$, $s=n^*(v)$,  $M= n_i$ and $N=n_v^*$. 
So $f_s = s/N =n^*(v)/n_v^* \sim p$. But as $\limsup_{n\to \infty} p <1$, we have $\lim\inf_{n\to \infty} (1- f_s) >0$. 
With these parameters,~\eqref{eq:Chernoff_hyper} yields
$$\prb{|n_i(v)- n^*(v) (n_i/n^*)|\geq \log n\sqrt{n^*(v)} \mid n_i, n^*,n^*(v)} = \exp \left(-\Omega (\log^2 n) \right).$$
\end{proof}
This lemma suggests that conditional on the values of $n_i, n^*_v$ and $n^*(v)$ on the events $\Ev_0$ and 
$\Ncal^*_{v}$, the random variable $n_i(v)$ is concentrated around its expected value in an interval 
of size $2 (n^*(v))^{1/2}\log n$.  In the next section, we will give the detailed asymptotic distribution of the vector $(n_i(v))_{i\in \Mcal_0}$ within this range.

\subsection{A local limit theorem}  \label{sec:LLT}
Without loss of generality, suppose that $\Mcal_0 = \{1,\ldots, k_0\}$. 
If we condition on a realisation of $S_*, S^*$, then each vertex $v\in S^*$ will take state $i \in \Mcal_0$ with 
probability $1/k_0$ independently of any other vertex in $S^*$ as well as of the random graph on $V_n$. 
Thus, it is natural to express the sizes of the sets $S_0^{-1}(i)$, for $i \in \Mcal_0$, in terms of their deviation from $n^*/k_0$. 

Let $c_1,\ldots, c_{k_0} \in \mathbb{R}$ be such that $n_i = n^*/k_0 + c_i \sqrt{n^*}$; thus $\sum_{i=1}^{k_0}c_i=0$.
As $n_{i,v}=|S_0^{-1} (i) \setminus \{v\}|$, it follows that $n_{i,v} = n^*/k_0 + c_i\sqrt{n^*} +O(1)$. 
Note that $n_i-1\leq n_{i,v} \leq n_i$. 
Furthermore, we write $n_{i,v} = n_v^* p_i$. 
Since $n_v^* =n^*-1_{v\in S^*}$, it follows that $p_i = 1/k_0 + c_i/\sqrt{n^*} +O(n^{-1})$.

Let us condition on $n^* (v)$ taking a particular value such that $\Ncal^*_{v}$ occurs. 
For $i=1,\ldots, k_0-1$ let us write $m_i = \lceil n^* (v) p_i\rceil$ and 
$m_{k_0} = n^*(v) - \sum_{i=1}^{k_0-1}m_i$. The quantities $m_i$ are essentially the expected number of 
neighbours that $v$ has inside $S_0^{-1}(i)$, if we condition on $n^*(v)$ and on a realisation of $S_0^{-1}(1),\ldots, S_0^{-1}(k_0)$. Lemma~\ref{lem:llt} below provides a local limit theorem for $n_1(v),\ldots, n_{k_0}(v)$ when these are 
centralised around $m_1,\ldots, m_{k_0}$.

We will give an estimate on the probability that 
$(n_1(v),\ldots, n_{k_0}(v)) = (s_1,\ldots,s_{k_0})$ conditional on the values of $n_v^*, n^*(v)$ and on  
$\cv^* := (c_1,\ldots, c_{k_0})^T\in \mathbb{R}^{k_0}$. (Note that the random vector 
$\cv^*$ determines the deviation of $n_i$ from $n^*/k_0$, for $i=1,\ldots, k_0$.)
This probability is: 
\begin{equation} \label{eq:probability}
\prb{(n_1(v),\ldots, n_{k_0} (v)) = (s_1,\ldots,s_{k_0}) \mid n_v^*, n^*(v), \cv^*} = \frac{\binom{n_{1,v}}{s_1} \cdots \cdot \binom{n_{k_0,v}}{s_{k_0}}}{\binom{n_v^*}{n^*(v)}}.
\end{equation}
We will show that this probability is approximated by the
probability density function of a multivariate gaussian
random vector, provided that $s_i$ is close to $m_i$ for all 
$i=1,\ldots, k_0-1$. 
Let $\un_{\ell}$ denote the $\ell \times \ell$ unit matrix and $\mathbf{1}_{\ell \times \ell}$ the all-ones $\ell\times \ell$ matrix. 
\begin{lemma} \label{lem:llt}
Let $p=p(n) \in [0,1]$ be such that $\limsup_{n\to \infty} p(n) <1$ and $n^{1/2}p \geq \log^3{n}$. 
Consider a realisation of $S_0 \in \Ev_0$, and for $v \in V_n$
let $n^*(v)$ be such that $\Ncal^*_{v}$ occurs. 
Suppose that $s_i = m_i+\delta_i$ with $|\delta_i| \leq \log n \cdot (n^*(v))^{1/2}$, for $i=1,\ldots, k_0-1$. Then uniformly over these values of $(\delta_1,\ldots,\delta_{k_0-1})$ and $\cv^*$ such that $\| \cv^*\|_\infty \leq \log n$,
with $\x = (x_1,\ldots, x_{k_0-1})^T := (\delta_1,\ldots , \delta_{k_0-1})^T / n^*(v)^{1/2}$ and 
\[\Sigma= \frac{k_0}{1-n^*(v)/ n_v^*} \left(\un_{k_0-1} + \mathbf{1}_{(k_0-1)\times (k_0-1)}\right),
\] we have 
\begin{eqnarray*}\lefteqn{ \prb{(n_1(v),\ldots, n_{k_0}(v) ) = (s_1,\ldots,s_{k_0}) \mid n_v^*, n^*(v), \cv^*} =}
\\
&& \hspace{3.5cm} \frac{1}{n^*(v)^{\frac{k_0-1}{2}}} \cdot \phi(\x)\left(1+o(p^{1/2}) \right),
\end{eqnarray*}
where $\phi (\x) = \left(\frac{|\Sigma|}{2\pi}\right)^{\frac{k_0-1}{2}} \exp \left(- \frac12 \x^T \Sigma \x \right)$ and 
$|\Sigma| = \mathrm{det} (\Sigma)$.
\end{lemma}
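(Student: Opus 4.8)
The plan is to start from the exact hypergeometric-type expression~\eqref{eq:probability} and apply Stirling's formula to each binomial coefficient, tracking the exponent to second order in the centred deviations. Write $s_i = m_i + \delta_i$ for $i = 1,\dots,k_0$ (recall $\sum_i s_i = n^*(v)$, so $\delta_{k_0}$ is determined by $\delta_1,\dots,\delta_{k_0-1}$ and $\sum_i \delta_i = O(1)$ since the $m_i$ sum to $n^*(v)$ only up to rounding). On the events $\Ev_0$ and $\Ncal^*_v$, together with $\|\cv^*\|_\infty \le \log n$, every quantity $n_{i,v}$, $n_i(v) = s_i$ and $n_v^* - n^*(v)$ grows proportionally to $n$ (using $\limsup p < 1$ for the last one), so all the Stirling error terms are of the claimed order. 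First I would factor~\eqref{eq:probability} as a product of ratios $\binom{n_{i,v}}{s_i}$ and expand $\log\binom{N}{K} = N H(K/N) - \tfrac12 \log(2\pi N \tfrac{K}{N}(1-\tfrac{K}{N})) + O(1/N)$ where $H$ is the binary entropy; doing this for numerator and denominator.

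The key algebraic step is the Taylor expansion of the entropy terms. I would set, for each $i$, the ``expected fraction'' $f_i := n^*(v)/n_v^* = n^*(v) p_i / n_{i,v}$ (the common ratio, using $n_{i,v} = n_v^* p_i$), so that $m_i = \lceil n^*(v) p_i \rceil = n_{i,v} f_i + O(1)$. Expanding $H(s_i/n_{i,v})$ around $s_i/n_{i,v} = f_i$ to second order, the linear terms across $i = 1,\dots,k_0$ combine to something that vanishes (because $\sum \delta_i = O(1)$ and the first-derivative coefficient $\log\frac{1-f_i}{f_i}$ is, to leading order, independent of $i$ since $f_i \to$ common value), while the quadratic terms assemble into $-\tfrac12 \sum_{i=1}^{k_0} \frac{\delta_i^2}{n_{i,v} f_i(1-f_i)}$. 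Substituting $\delta_{k_0} = -\sum_{i<k_0}\delta_i + O(1)$, $n_{i,v} \sim n^*/k_0$, $f_i(1-f_i) \sim (n^*(v)/n_v^*)(1 - n^*(v)/n_v^*)$, and $x_j = \delta_j/n^*(v)^{1/2}$, the quadratic form becomes exactly $-\tfrac12 \x^T \Sigma \x$ with $\Sigma$ as stated: the $\mathbf{1}_{(k_0-1)\times(k_0-1)}$ block is precisely the cross-terms generated by eliminating $\delta_{k_0}$, and the prefactor $k_0/(1 - n^*(v)/n_v^*)$ comes from $1/(n_{i,v} f_i(1-f_i)/n^*(v))$ after simplification.

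Finally I would collect the polynomial (sub-exponential) prefactors: the $\sqrt{2\pi N (\cdot)}$ terms from Stirling on the $k_0$ numerator binomials and the one denominator binomial combine, after substituting the leading-order values of all the $n_{i,v}$ and $f_i$, to give $n^*(v)^{-(k_0-1)/2}$ times the Gaussian normalising constant $(|\Sigma|/2\pi)^{(k_0-1)/2}$ — this is a determinant bookkeeping check, using $\det(\un_{k_0-1} + \mathbf{1}) = k_0$. For the error term, I would argue that the dominant correction comes from (a) the $O(1)$ slack in $\sum \delta_i$ and in $m_i = n^*(v)p_i + O(1)$, and (b) the cubic term in the entropy Taylor expansion, which is $O(\delta_i^3 / n_{i,v}^2) = O(\log^3 n \cdot n^*(v)^{3/2}/n^2)$; since $n^*(v) \sim n^* p$ and $n^{1/2} p \ge \log^3 n$, this is $O(p^{1/2} \cdot \mathrm{polylog}/\mathrm{something})$ — here the hypothesis $n^{1/2}p \ge \log^3 n$ is exactly what is needed to absorb the $\log$ powers and land at $o(p^{1/2})$, matching the stated relative error. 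The main obstacle, and the part requiring genuine care rather than routine computation, is verifying that the linear terms in the entropy expansion cancel to the required precision: the first derivatives $\log\frac{1-f_i}{f_i}$ differ across $i$ by $O(c_i/\sqrt{n^*}) = O(\log n / \sqrt{n^*})$, which multiplied by $\delta_i = O(\log n \sqrt{n^*(v)})$ gives a contribution of order $\log^2 n \cdot \sqrt{p}$ to the log-probability — one must check this is still $o(p^{1/2})$ after exponentiation or, more likely, that these terms recombine (using $\sum_i n_{i,v} = n_v^*$ exactly) into the shift that is already accounted for by centring at $m_i$ rather than at $n^*(v) p_i$, so that no net error beyond $o(p^{1/2})$ survives.
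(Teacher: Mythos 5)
Your approach is essentially the one the paper takes: start from the exact expression~\eqref{eq:probability}, apply Stirling in the entropy form $\log\binom{N}{K}=NH(K/N)-\tfrac12\log(2\pi N\tfrac{K}{N}(1-\tfrac{K}{N}))+O(1/N)$, Taylor expand, and show the quadratic form assembles to $-\tfrac12\x^T\Sigma\x$ with $\det(\un_{k_0-1}+\mathbf{1})=k_0$. Two points in your write-up are off and should be flagged.

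First, the linear-term ``obstacle'' you raise at the end is a non-issue, and in fact you argue against yourself: you \emph{define} $f_i:=n^*(v)/n_v^*=n^*(v)p_i/n_{i,v}$, which is literally the same number for every $i$ (the paper sets $n_{i,v}=n_v^*p_i$ by definition of $p_i$). So $H'(f_i)$ is the same for all $i$, not ``to leading order'' but exactly, and there is no $O(c_i/\sqrt{n^*})$ spread to worry about. Moreover $\sum_i\delta_i=0$ exactly — not $O(1)$ as you say — because $m_{k_0}$ is defined as $n^*(v)-\sum_{i<k_0}m_i$, so the $m_i$ sum to $n^*(v)$ with no rounding slack; similarly $\sum_i(m_i-n^*(v)p_i)=0$ and $\sum_i n_{i,v}=n_v^*$. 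Hence the linear contribution $H'(f)\sum_i(s_i-n_{i,v}f)$ vanishes identically. In the paper's version of the same cancellation, this appears as the product $\prod_i\bigl(\tfrac{n_i-m_i}{m_i}\bigr)^{\delta_i}=\bigl(\tfrac{n}{n(v)}-1\bigr)^{\sum\delta_i}\cdot(1+o(p^{1/2}))=1+o(p^{1/2})$.

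Second, and this is the genuine gap, your cubic remainder estimate drops the third-derivative factor. You write the cubic contribution as $O(\delta_i^3/n_{i,v}^2)$, but the Lagrange remainder is $\tfrac16 H'''(\xi_i)(s_i/n_{i,v}-f)^3 n_{i,v}$ with $|H'''(\xi_i)|=\Theta(1/p^2)$ since $\xi_i\sim p$. The correct order is therefore $O\bigl(\log^3 n/(np)^{1/2}\bigr)$, not $O(\log^3 n\cdot n^*(v)^{3/2}/n^2)$. Controlling the former is exactly where the assumption on $p$ is spent (the paper records this as~\eqref{eq:est1}: $\log^3 n/\sqrt{np}\le p^{1/2}$); with your exponent the bound would be vacuous, so your final check ``$n^{1/2}p\ge\log^3 n$ is exactly what is needed'' is asserted from the wrong calculation. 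As a stylistic remark only, the paper sidesteps having to quote $H'''$ at all: it factors $\binom{n_i}{s_i}=\binom{n_i}{m_i}\cdot\tfrac{m_i!\,(n_i-m_i)!}{(m_i+\delta_i)!\,(n_i-m_i-\delta_i)!}$, evaluates the ratio by an elementary product expansion $\prod_j(1+j/m_i)^{-1}=\exp(-\delta_i^2/2m_i+O(|\delta_i|/m_i+|\delta_i|^3/m_i^2))$, and applies Stirling/entropy only to $\binom{n_i}{m_i}$ (no $\delta_i$-dependence there). This is equivalent to your approach but makes the remainder bookkeeping transparent.
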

\begin{proof}
For notational convenience we will write $n(v)$ for $n^*(v)$, $k$ for $k_0$ and $n$ for 
$n_v^*$ and $n_i$ for $n_{i,v}$. This should cause no confusion in regards to the asymptotics we derive,
as $n^*(v)$ is a.a.s. of the same order of magnitude as $n(v)$, and $n_{i,v}$ differs by at most 1 from $n_i$, respectively.
We will give asymptotic estimates for each one of the binomial coefficients that appear in~\eqref{eq:probability}. 
Before we do this, let us give some 
relations, which we will use in our 
estimates later. 
Firstly, let us observe that since $np \gg n^{2/3}$
\begin{equation} \label{eq:est1}
    \frac{\log^2 n}{np} \ll \frac{\log^{3}{n}}{\sqrt{np}}\leq p^{1/2}.
\end{equation}
Therefore, 
\begin{equation}
     \label{eq:est3}
    \frac{\log^2 n}{np} = o \left(p^{1/2}\right).
\end{equation}
This is equivalent to $\log^2 n / n = o (p^{3/2})$. But
$$\frac{\log^{4/3} n}{n^{2/3}} \ll \frac{1}{n^{1/2}}\ll p.$$


We begin with the denominator of~\eqref{eq:probability}. Let $H(x) = - x \ln x - (1-x) \ln (1-x)$ denote the entropy function defined for $x \in (0,1)$. 
We recall a standard estimate for this binomial coefficient 
which relies on the Stirling approximation for the factorial:
$n! = \sqrt{2\pi n} n^n e^{-n} (1+ O(1/n))$. 
Using this, we write 
\begin{eqnarray*}
&&\binom{n}{n(v)} = \frac{n!}{n(v)! (n-n(v))!} \\
&=& \frac{1+O(1/n(v))}{\sqrt{2\pi}} \cdot \sqrt{\frac{n}{n(v) (n-n(v))}} \cdot \frac{n^n e^{-n}}{n(v)^{n(v)} e^{-n(v)} (n-n(v))^{n-n(v)} e^{-(n-n(v))}} \\
&=& \frac{1+O(1/np)}{\sqrt{2\pi}} \cdot \sqrt{\frac{n}{n(v) (n-n(v))}} \cdot e^{n H\left(n(v)/n\right)}.
\end{eqnarray*}
Therefore, 
\begin{equation}\label{eq:denom_estimate} 
\binom{n}{n(v)} \stackrel{\eqref{eq:est3}}{=} (1+ o(p^{1/2})) \sqrt{\frac{1}{1-n(v)/n}} \cdot 
\frac{1}{\sqrt{2\pi n(v)}}\cdot  e^{n H\left(n(v)/n\right)}.
\end{equation}
Now, we will consider $\binom{n_i}{s_i}$. In fact, we will
express this in terms of $\binom{n_i}{m_i}$. In turn, we will
express the latter using~\eqref{eq:denom_estimate}. 
Recalling that $s_i = m_i +\delta_i$, we write 
\begin{eqnarray*}
\binom{n_i}{s_i}/\binom{n_i}{m_i} = \frac{m_i!}{(m_i+\delta_i)!} \cdot \frac{(n_i-m_i)!}{(n_i-m_i-\delta_i)!}.
\end{eqnarray*}
Suppose first that $\delta_i > 0$. Then 
$$\frac{m_i!}{(m_i+\delta_i)!} = \frac{1}{m_i^{\delta_i}} \prod_{j=1}^{\delta_i} \left(1 + \frac{j}{m_i}\right)^{-1}. $$
Writing $1 + j/m_i = \exp \left( \ln (1+ j/m_i)\right)$ and 
expanding the $\ln$ function around 1, we get that 
$$\prod_{j=1}^{\delta_i} \left(1 + \frac{j}{m_i}\right)^{-1} 
=\exp \left( - \frac{\delta_i^2}{2m_i} + O\left(\frac{|\delta_i|}{m_i} + \frac{|\delta_i|^3}{m_i^2} \right)\right).
$$ 
Similarly, we get
$$\frac{(n_i-m_i)!}{(n_i-m_i-\delta_i)!} = 
(n_i-m_i)^{\delta_i} \exp \left(-\frac{\delta_i^2}{2(n_i-m_i)} + O\left(\frac{|\delta_i|}{n_i-m_i} + \frac{|\delta_i|^3}{(n_i-m_i)^2} \right)
\right).
$$
As $n_i-m_i = \Omega (n)$, we deduce that 
$$  \frac{|\delta_i|}{n_i-m_i} + \frac{|\delta_i|^3}{(n_i-m_i)^2} = O \left( \frac{\log n \cdot (np)^{1/2}}{n} + \frac{\log^3 n \cdot (np)^{3/2}}{n^2}\right) =o(p^{1/2}).$$ 
Furthermore, as $|\delta_i| \leq (n(v))^{1/2}\log{n} \stackrel{\Ncal^*_v}{=} O((np)^{1/2} \log{n})$,
\[
\frac{\sum_{i=1}^{k-1}|\delta_i|}{np}+\frac{\sum_{i=1}^{k-1}|\delta_i|^3}{(np)^2} = O\left(\frac{\log^{3}{n}}{\sqrt{np}}\right) \stackrel{\eqref{eq:est1}}{=} o(p^{1/2}).
\]

We thus conclude that 
\begin{equation} \label{eq:num_coeff}
\begin{split}
   & \binom{n_i}{s_i} = \left(1+o(p^{1/2})\right) 
  \binom{n_i}{m_i} \cdot\left(\frac{n_i-m_i}{m_i} \right)^{\delta_i} \cdot \exp\left(-\frac{\delta_i^2}{2} \cdot \frac{n_i}{m_i (n_i - m_i)}\right).
 \end{split}
\end{equation}
Note that as $m_i =\lceil n(v) p_i\rceil = n(v)p_i+r$, for some $0\leq r <1$, and $n_i = np_i$, we have 
\begin{equation} \label{eq:mini_approx}
\frac{n_i}{m_i} = \frac{np_i}{n(v)p_i +r}= \frac{n}{n(v)}\left(1 + O\left( \frac{1}{n(v)}\right)\right).
\end{equation}
Since $|\delta_i| =O( \log n \cdot  (np)^{1/2})$,
we deduce that 
\begin{equation*}
\begin{split}
\left(\frac{n_i-m_i}{m_i} \right)^{\delta_i} &= \left(\frac{n}{n(v)}-1 +O\left(\frac{n}{n(v)^2}\right) \right)^{\delta_i}
\\
& =  \left(\frac{n}{n(v)}-1 \right)^{\delta_i} \left(1+ O\left(\frac{|\delta_i|}{n(v)}\right)\right)\\
& = \left(\frac{n}{n(v)}-1 \right)^{\delta_i} \left(1+ O\left(\frac{\log n}{(np)^{3/2}}\right)\right)\\
& \stackrel{\eqref{eq:est3}}{=}  \left(\frac{n}{n(v)}-1 \right)^{\delta_i} (1 +o(p^{1/2})).
\end{split}
\end{equation*}
Furthermore, 
$$\frac{\delta_i^2}{2} \cdot \frac{n_i}{m_i (n_i - m_i)} =
\frac{\delta_i^2}{2 n(v)p_i} \left(\frac{1}{1-n(v)/n}\right) + O(\delta_i^2/(np)^2). $$
But $\delta_i^2 =O( \log^2 n \cdot (np))$ and therefore
$$\frac{\delta_i^2}{2} \cdot \frac{n_i}{m_i (n_i - m_i)} =
\frac{\delta_i^2}{2 n(v)p_i} \left(\frac{1}{1-n(v)/n}\right) + O(\log^2 n/(np)). $$
By~\eqref{eq:est3}, $\log^2 n /(np) = o(p^{1/2})$ and, thereby,
$$\exp\left(-\frac{\delta_i^2}{2} \cdot \frac{n_i}{m_i (n_i - m_i)}\right) = 
(1+o(p^{1/2})) 
\exp \left(-\frac{\delta_i^2}{2 n(v)p_i} \left(\frac{1}{1-n(v)/n}\right)  \right). $$ 
As $n_i = \Theta(n)$ and $m_i = \Theta (np)$, applying~\eqref{eq:denom_estimate}, we can also deduce that \begin{equation} \label{eq:num_coeff_I}
    \binom{n_i}{m_i} = (1+o(p^{1/2})) \sqrt{\frac{1}{1-n(v)/n}}\cdot 
    \frac{1}{\sqrt{2\pi m_i}} \cdot e^{n_i H(m_i/n_i)}.
\end{equation}
Combining~\eqref{eq:denom_estimate},
~\eqref{eq:num_coeff} and~\eqref{eq:num_coeff_I}, we
deduce that \begin{eqnarray}
  \lefteqn{\prb{(n_1 (v),\ldots, n_k(v)) = (s_1,\ldots, s_{k})\mid n, n(v),\cv^*}=} \nonumber\\
  && \left(1+o(p^{1/2})\right) \times \nonumber \\
  &&\frac{1}{(1-n(v)/n)^{\frac{k-1}{2}}} \cdot 
  \frac{1}{(2\pi)^{\frac{k-1}{2}}} \cdot \sqrt{\frac{n(v)}{\prod_{i=1}^k m_i}} \cdot 
\left(  \prod_{i=1}^k \left( \frac{n}{n(v)}-1 \right)^{\delta_i}
  \right) \times\nonumber \\
  &&
  \hspace{2cm} \exp\left(- \frac{1}{2} \sum_{i=1}^k \frac{\delta_i^2}{n(v)} \frac{1}{p_i(1-n(v)/n)} \right) \times \nonumber \\
  && e^{\sum_{i=1}^k n_i H(m_i/n_i) - n H(n(v)/n)} \nonumber \\
  &\stackrel{\delta_1 + \cdots + \delta_k=0}{=}& \left(1+o(p^{1/2})\right) \times \nonumber \\ 
  &&\frac{1}{(1-n(v)/n)^{\frac{k-1}{2}}}
  \cdot \frac{1}{(2\pi n(v))^{\frac{k-1}{2}}} \cdot \sqrt{\frac{1}{\prod_{i=1}^k p_i}} \times \nonumber \\
 &&  \exp\left(- \frac{1}{2} \sum_{i=1}^k \frac{\delta_i^2}{n(v)} \frac{1}{p_i(1-n(v)/n)} \right) \times \nonumber \\
  &&
  \hspace{2cm} e^{\sum_{i=1}^k n_i H(m_i/n_i) - n H(n(v)/n) }. \label{eq:prob_inter}
\end{eqnarray}

\begin{claim}  \label{clm:exponential}
With $\x = (\delta_1, \ldots,  \delta_{k-1})^T /n(v)^{1/2}$ 
and
\[
\Sigma= \frac{k}{1-n(v)/n}\left[ \begin{array}{ccc} 
2 &\cdots & 1 \\
\vdots & \ddots & \vdots \\
1 & \cdots & 2
\end{array} \right], 
\]
we have
\begin{equation*}
\exp\left(- \frac{1}{2} \sum_{i=1}^k \frac{\delta_i^2}{n(v)} \frac{1}{p_i(1-n(v)/n)} \right)  =(1+o(p^{1/2})) \exp \left(-\frac{1}{2} \boldsymbol{x}^{T} \Sigma \boldsymbol{x} \right).
\end{equation*}
\end{claim}
\begin{proof}
Since $\delta_1 + \cdots + \delta_k=0$ 
we write, with $\boldsymbol{\delta} = (\delta_1,\ldots,\delta_{k-1})^T$,
\begin{eqnarray*}
\sum_{i=1}^k \frac{\delta_i^2}{p_i} &=& 
\sum_{i=1}^{k-1} \frac{\delta_i^2}{p_i}  + \frac{(\delta_1+ \cdots + \delta_{k-1})^2}{p_k} \\
&=& \sum_{i=1}^{k-1} \delta_i^2\left(\frac{1}{p_i} + \frac{1}{p_k}\right)  + \sum_{i\not =j} \delta_i \delta_j 
\frac{1}{p_k} \\
&=& \boldsymbol{\delta}^T 
\left[ \begin{array}{ccc} 
\frac{1}{p_1}+ \frac{1}{p_k} &\cdots & \frac{1}{p_k} \\
\vdots & \ddots & \vdots \\
\frac{1}{p_k} & \cdots &\frac{1}{p_{k-1}} + \frac{1}{p_k}
\end{array} \right] 
\boldsymbol{\delta}.
\end{eqnarray*}
Thus, setting 
\[
\Sigma'= \frac{1}{1-n(v)/n}\left[ \begin{array}{ccc} 
\frac{1}{p_1}+ \frac{1}{p_k} &\cdots & \frac{1}{p_k} \\
\vdots & \ddots & \vdots \\
\frac{1}{p_k} & \cdots &\frac{1}{p_{k-1}} + \frac{1}{p_k}
\end{array} \right],
\]
and using the scaling 
$\x = (\delta_1,\ldots,  \delta_{k-1})^T /n(v)^{1/2}$ we get
\begin{equation*}
\exp\left(- \frac{1}{2} \sum_{i=1}^k \frac{\delta_i^2}{n(v)} \frac{1}{p_i(1-n(v)/n)} \right)  = \exp \left(-\frac{1}{2} \boldsymbol{x}^{T} \Sigma' \boldsymbol{x} \right).
\end{equation*}
Now, note that $p_i = 1/k + \Theta (\log n\cdot n^{-1/2})$. Therefore, uniformly for $\boldsymbol{x}$ such that 
$\| \boldsymbol{x} \|_\infty \leq \log n$, we have 
$\boldsymbol{x}^{T} \Sigma' \boldsymbol{x}  = \boldsymbol{x}^{T} \Sigma \boldsymbol{x}  + O(\log^3 n/n^{1/2}) 
=  \boldsymbol{x}^{T} \Sigma \boldsymbol{x}  + o( p^{1/2} )$. 
The claim then follows.
\end{proof}
\noindent
We now calculate $|\Sigma|$. 
\begin{claim}
We have 
$$|\Sigma| =\frac{k^k}{\left(1-n(v)/n\right)^{k-1}}.   $$
\end{claim}
\begin{proof}
We will consider first the matrix $\un_{k-1} + \mathbf{1}_{(k-1)\times (k-1)}$. We will show that 
\begin{equation}\label{eq:det_sigma'}|\un_{k-1} + \mathbf{1}_{(k-1)\times (k-1)}| = k.
\end{equation}
We will show this by finding the eigenvalues of $\un_{k-1} + \mathbf{1}_{(k-1)\times (k-1)}$ and using the fact that 
$|\un_{k-1} + \mathbf{1}_{(k-1)\times (k-1)}|$ is equal to their product. 
The eigenvectors of $\un_{k-1} + \mathbf{1}_{(k-1)\times (k-1)}$ are $\mathbf{1}_{k-1}$ (the all-ones vector), and the 
vectors $e_{1j}$, for $1< j \leq k-1$ with 1 in the first component and $-1$ at the $j$th 
component and $0$ everywhere else. The associated eigenvalues are $k$ for $\mathbf{1}_{k-1}$ and $1$ for the eigenvectors $e_{1j}$, for $j=2,\ldots, k-1$. 
Thus, the product of the eigenvalues is equal to $k$ whereby~\eqref{eq:det_sigma'} follows.
Therefore, 
$$|\Sigma | = \left( \frac{k}{1-n(v)/n}\right)^{k-1} \cdot |\un_{k-1} + \mathbf{1}_{(k-1)\times (k-1)}| = 
\frac{k^{k}}{\left(1-n(v)/n\right)^{k-1}}. $$

\end{proof}
Returning to~\eqref{eq:prob_inter}, we will show that the 
term 
$$ \left(\frac{1}{1-n(v)/n}\right)^{\frac{k-1}{2}}\sqrt{\frac{1}{p_1\cdots p_{k}}} = (1+o(p^{1/2}))
|\Sigma|^{1/2}.$$
By the above claim, it suffices to show that 
$$\sqrt{\frac{1}{p_1\cdots p_{k}}} = (1+o(p^{1/2})) k^{k/2}. $$
Recall that $p_i = 1/k + O(\log n \cdot n^{-1/2})$, for $i=1,\ldots, k$. 
Therefore,
$$\sqrt{\frac{1}{p_1\cdots p_{k}}} = k^{k/2} \left(1+O(\log n \cdot n^{-1/2})\right) = k^{k/2} \left(1+o(p^{1/2}))\right), $$
as $p \gg \log^2  n/n$.

\noindent
Finally, we will deal with last exponential in~\eqref{eq:prob_inter}.
\begin{claim}
We have 
$$\sum_{i=1}^k n_i H(m_i/n_i) - n H(n(v)/n)   = O(1/np). $$ \qedhere
\end{claim}
\begin{proof}
We will approximate $H(m_i/n_i)$ by $H(n(v)/n)$ using (the second order) Taylor's theorem around
$n(v)/n$. Let $i \in \{1,\ldots, k-1\}$. Then 
$m_i \geq n(v) p_i$, whereby $n(v)/n \leq m_i /n_i$. Furthermore, for $n$ sufficiently large $m_i/n_i<1$ and therefore $[n(v)/n,m_i/n_i] \subset (0,1)$. 
Since the entropy function $H$ is twice differentiable in $(0,1)$,  
there exists $\xi \in [n(v)/n,m_i/n_i]$
$$H\left(\frac{m_i}{n_i}\right)= H\left(\frac{n(v)}{n_i}\right) + H'\left(\frac{n(v)}{n}\right)
\left(\frac{m_in - n_in(v)}{nn_i}\right) 
+ \frac{1}{2} H''(\xi)\left(\frac{m_in - n_in(v)}{nn_i}\right)^2.$$
By~\eqref{eq:mini_approx}, we have that
$$\left(\frac{m_in - n_in(v)}{nn_i}\right)^2 = O\left( \frac{1}{n^2} \right). $$
Also, $H'' (x) = - \frac{1}{x(1-x)}$, for any $x\in (0,1)$. As $\xi = p +o(p)$ and 
$p$ is asymptotically bounded away from 1, we have 
$$|H''(\xi)| = \frac{1}{\xi (1-\xi)} = O(1/\xi) = O(1/p).$$
We can deduce the same for $i=k$, except that in that case
$m_k/n_k \leq n(v)/n$. 
Using these, we can write
\begin{eqnarray}
\lefteqn{\sum_{i=1}^k n_iH\left(\frac{m_i}{n_i}\right)=} \nonumber\\
&&\sum_{i=1}^k n_i H\left(\frac{n(v)}{n}\right) 
+ \sum_{i=1}^k n_i H'\left(\frac{n(v)}{n}\right)
\left(\frac{m_in - n_in(v)}{nn_i}\right) + 
O\left(\frac{1}{n^2 p}\right)\sum_{i=1}^k n_i \nonumber \\
&=& n H\left(\frac{n(v)}{n}\right)  
+  H'\left(\frac{n(v)}{n}\right)
\left(\frac{n\sum_i m_i - n(v) \sum_i n_i}{n}\right) 
+O\left(\frac{1}{np}\right) \nonumber \\
&=& n H\left(\frac{n(v)}{n}\right)  
+O\left(\frac{1}{np}\right), \nonumber
\end{eqnarray}
since $\sum_i m_i = n(v)$ and $\sum_i n_i =n$. 
\end{proof}
Therefore, 
\begin{equation} \label{eq:last_term}
e^{\sum_{i=1}^k n_i H(m_i/n_i) - n H(n(v)/n)} = e^{O(1/np)} = 1 + O(1/(np)) \stackrel{\eqref{eq:est1}}{=} 1+o(p^{1/2}).
\end{equation}
We thus conclude that
\[
\prb{(n_1 (v),\ldots, n_k (v)) = (s_1,\ldots,s_k)  \mid n, n(v), \cv^*} =\frac{1}{n(v)^{\frac{k-1}{2}}} \cdot \phi(\boldsymbol{x})\cdot\left(1+o(p^{1/2})\right).  \qedhere
\]
\end{proof}
\subsection{Some basic (anti-)concentration properties of the initial configuration} \label{sec:anticonc}
In this section we will prove that with high probability the random variables $n_i$, for $i \in \Mcal_0$, are not too close to or too far from $n^*/k_0$. Moreover, we show that it is also unlikely for the $c_i$s to be too close to each other. 
\begin{lemma} \label{lem:anti_conc}
For any $\eps >0$ there exists $\delta>0$ such that 
with probability at least $1-\eps$, for $i\in\Mcal_0$ we have 
\begin{equation} \label{eq:event_1} \delta < \frac{1}{\sqrt{n^*}} |n_i - n^*/k_0|\leq 1/\delta, \end{equation}
and for distinct $i=1, \ldots, k_0-1$ 
\begin{equation} \label{eq:event_2} |c_i -c_{i+1}|> \delta, \end{equation}
with $c_{k_0} = -\sum_{j<k_0}c_j$.
\end{lemma}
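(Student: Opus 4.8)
The plan is to deduce the statement from the multivariate central limit theorem for the multinomial distribution, combined with the elementary fact that a non-degenerate one-dimensional Gaussian puts $O(\delta)$ mass on any interval of length $O(\delta)$ and $o_\delta(1)$ mass outside $[-1/\delta,1/\delta]$. We may assume $k_0\geq 2$, since if $k_0=1$ then $n_1=n^*$, $c_1=0$ and \eqref{eq:event_1} is vacuous (and, by Claim~\ref{clm:only_M_0}, unanimity then already holds after round~$1$, so the lemma is not used).

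\emph{Conditioning and the CLT.} First I would condition on the value of $n^*$. Since every $\lambda_i$ with $i\in\Mcal_0$ equals $\lambda_{\max}$, conditioning the multinomial vector $(n_1,\ldots,n_k)$ on $n^*=\sum_{i\in\Mcal_0}n_i=m$ makes $(n_i)_{i\in\Mcal_0}$ distributed as $\mult(m;(1/k_0,\ldots,1/k_0))$, and on this event $\cv^*=(c_1,\ldots,c_{k_0})$ equals exactly $\big((n_i-m/k_0)/\sqrt{m}\big)_{i\in\Mcal_0}$. As $n^*\sim\bin(n,\lambda^*)$ with $\lambda^*>0$ fixed, \eqref{eq:Chernoff} gives $\prb{n^*<\lambda^* n/2}=e^{-\Omega(n)}$, so $n^*\to\infty$ in probability. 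By the multivariate central limit theorem, as $m\to\infty$ the conditional law of $\cv^*$ given $n^*=m$ converges to that of a centred Gaussian vector $W=(W_1,\ldots,W_{k_0})$ with $\mathrm{Var}(W_i)=\tfrac1{k_0}\big(1-\tfrac1{k_0}\big)$ and $\mathrm{Cov}(W_i,W_j)=-\tfrac1{k_0^2}$ for $i\neq j$. In particular $W_1+\cdots+W_{k_0}=0$ almost surely (matching $\sum_i c_i=0$, so $c_{k_0}=-\sum_{j<k_0}c_j$), while $W_i\sim N\big(0,\tfrac1{k_0}(1-\tfrac1{k_0})\big)$ and $W_i-W_j\sim N\big(0,\tfrac2{k_0}\big)$ are genuinely non-degenerate one-dimensional Gaussians — here is where $k_0\geq 2$ is used.

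\emph{Choice of $\delta$.} Fix $\eps>0$ and put
\[
B_\delta \;=\; \bigcup_{i\in\Mcal_0}\big(\{|w_i|\leq\delta\}\cup\{|w_i|>1/\delta\}\big)\;\cup\;\bigcup_{i\neq j\in\Mcal_0}\{|w_i-w_j|\leq\delta\}\ \subseteq\ \mathbb{R}^{k_0}.
\]
From the one-dimensional marginals, $\prb{W\in\{|w_i|\leq\delta\}}=O(\delta)$, $\prb{W\in\{|w_i-w_j|\leq\delta\}}=O(\delta)$ and $\prb{W\in\{|w_i|>1/\delta\}}=o(1)$ as $\delta\to 0$; a union bound over the constantly many pieces lets us choose $\delta=\delta(\eps)>0$ with $\prb{W\in B_\delta}<\eps/2$. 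Moreover each boundary set $\{|w_i|=\delta\}$, $\{|w_i|=1/\delta\}$, $\{|w_i-w_j|=\delta\}$ has zero $W$-mass (the relevant marginals are continuous), so $\partial B_\delta$ is $W$-null and $B_\delta$ is a $W$-continuity set.

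\emph{Transfer and conclusion.} Finally I would pass back from $W$ to $\cv^*$. Because $B_\delta$ is a $W$-continuity set, the conditional CLT gives: for every $\eta>0$ there is $M$ with $\big|\prb{\cv^*\in B_\delta\mid n^*=m}-\prb{W\in B_\delta}\big|<\eta$ for all $m\geq M$; using $\prb{n^*<M}=o(1)$ and that conditional probabilities are at most $1$, averaging over $n^*$ yields $\prb{\cv^*\in B_\delta}\leq\prb{W\in B_\delta}+\eta+o(1)$, which is less than $\eps$ for all $n$ sufficiently large (take $\eta=\eps/4$). On the event $\cv^*\notin B_\delta$ we have $\delta<|c_i|\leq 1/\delta$ for every $i\in\Mcal_0$ and $|c_i-c_j|>\delta$ for every $i\neq j$ — in particular $|c_i-c_{i+1}|>\delta$ for $i=1,\ldots,k_0-1$ — and since $n_i=n^*/k_0+c_i\sqrt{n^*}$ this is exactly \eqref{eq:event_1}–\eqref{eq:event_2}. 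The one delicate point is interchanging the conditioning on $n^*$ with the limit, which is handled by the bounded-convergence argument just described; the rest are routine Gaussian estimates.
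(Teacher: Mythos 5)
Your proof is correct, and it takes a genuinely different route from the paper. The paper proves this lemma via \emph{local} limit theorems: the one-dimensional LLT for $\bin(n^*,1/k_0)$ (Theorem~\ref{thm:LLT}) to show $|n_i - n^*/k_0|$ is not too small, the Chernoff bound for the upper tail, and a multinomial LLT (Arenbaev/Siotani--Fujikoshi) in Lemma~\ref{lem:cstar_dot} to get anti-concentration of the linear functionals $\cv^*\cdot\boldsymbol{C}$ by counting lattice points in a slab of the rescaled lattice. You instead invoke only the (weaker) multivariate central limit theorem for the conditioned multinomial vector, show the ``bad'' region $B_\delta$ is a continuity set for the (degenerate) limiting Gaussian $W$ because its boundary sits on level sets of non-degenerate one-dimensional marginals $W_i$ and $W_i-W_j$, and then transfer the Gaussian bound back to $\cv^*$ via Portmanteau together with a bounded-convergence argument over $n^*$. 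This is more economical: one soft tool handles both the small-ball lower bound and the tail upper bound simultaneously, with no lattice-point counting; the trade-off is that it gives no explicit rate, whereas the paper's LLT argument yields quantitative $O(\delta)$ estimates (and fits naturally with the LLT machinery it has already developed for Section~\ref{sec:LLT}). Your conditioning step, the identification of the conditional law as $\mult(m;(1/k_0,\dots,1/k_0))$, the covariance computation, and the interchange-of-limits argument are all correct, and you rightly flag and handle the only delicate point (averaging over $n^*$ while the CLT convergence is in $m$).
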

\noindent
We denote the event that is described by~\eqref{eq:event_1} and \eqref{eq:event_2} by $\Ev_{\delta, n}$.
\begin{proof}
Conditional on $|S^*|=n^*$, for any $i \in \Mcal_0$ we have $n_i \sim \bin (n^*,1/k_0)$. The local limit theorem (Theorem~\ref{thm:LLT}) implies that the probability that 
$n_i$ is too close to $n^*/k_0$ is small. 
\begin{lemma} \label{lem:anti_conc}
For any $\eps>0$ there exists $\delta > 0$ such that 
$$\prb{\left| n_i - n^* /k_0 \right|\leq \delta \sqrt{n^*} \mid n^*} < \eps. $$
\end{lemma}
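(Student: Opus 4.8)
The plan is to prove this by a direct application of the local limit theorem for the binomial distribution (Theorem~\ref{thm:LLT}), which turns the small-interval probability into a sum of uniformly small point probabilities. Condition throughout on the value of $n^*$; recall that, given $n^*$, we have $n_i \sim \bin(n^*, 1/k_0)$, so its variance is $\sigma^2 = n^*\,\frac{1}{k_0}\!\left(1-\frac{1}{k_0}\right) = \Theta(n^*)$, where the implicit constants depend only on $k_0$, hence only on $k$. By Theorem~\ref{thm:LLT}, uniformly over all $s\in\mathbb{Z}$,
\[
\prb{n_i = s \mid n^*} \le \frac{1}{\sqrt{2\pi\sigma^2}} + O\!\left(\frac{1}{\sigma^2}\right) = O\!\left(\frac{1}{\sqrt{n^*}}\right),
\]
where (since $\sigma^2 = \Theta(n^*) \to \infty$ in the regime of interest) the error term is absorbed, and the resulting $O(\cdot)$ depends on $k$ but crucially \emph{not} on $\delta$. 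This is the only nontrivial input.

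Next I would convert the interval probability into a sum of point probabilities. The event $\{\,|n_i - n^*/k_0| \le \delta\sqrt{n^*}\,\}$ forces $n_i$ to take one of at most $2\delta\sqrt{n^*}+1$ integer values, so a union bound over those values together with the point estimate above gives
\[
\prb{\bigl|n_i - n^*/k_0\bigr| \le \delta\sqrt{n^*} \mid n^*} \le \bigl(2\delta\sqrt{n^*}+1\bigr)\cdot O\!\left(\frac{1}{\sqrt{n^*}}\right) = C\delta + \frac{C'}{\sqrt{n^*}}
\]
for constants $C, C'$ depending only on $k$.

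To finish, given $\eps>0$ I would fix $\delta := \eps/(2C)$; since $C$ was determined before $\delta$, there is no circularity. Then, for $n$ sufficiently large — which is the only regime of interest, as $n^* = \Theta(n)$ with probability $1-o(1)$, e.g.\ on the event $\Pcal_0$ we have $n^* \ge n\lambda^* - k_0 n^{1/2}\log n \to \infty$ — the term $C'/\sqrt{n^*}$ is at most $\eps/2$, and the whole bound drops below $\eps$, as required. The argument presents essentially no obstacle; the only two points to watch are that the point bound from Theorem~\ref{thm:LLT} is genuinely uniform in $s$ and free of any dependence on $\delta$ (so that $\delta$ can legitimately be chosen last), and that the conclusion is meaningful only once $n^*$ is large, which is automatic in the intended application.
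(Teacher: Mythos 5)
Your proof is correct and takes essentially the same route as the paper: both condition on $n^*$, invoke the local limit theorem (Theorem~\ref{thm:LLT}) to bound each point probability $\prb{n_i = s \mid n^*}$ uniformly by $O(1/\sqrt{n^*})$, and sum over the $O(\delta\sqrt{n^*})$ integers in the window to get a bound of order $\delta$ (plus a $\delta$-independent $O(1/\sqrt{n^*})$ term that vanishes for large $n^*$). Your write-up is slightly more explicit than the paper's about the order of quantifiers — fixing $\delta$ via the $\delta$-independent constant $C$ and then letting $n^*$ be large — but this is just a cleaner presentation of the identical argument.
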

\begin{proof}
We use the local limit theorem (Theorem~\ref{thm:LLT}). 
We denote the variance of $n_i$ (conditional on $n^*$)  by 
$\sigma_*^2 = n^* (1/k_0)(1- 1/k_0)$ and write
\begin{eqnarray*}
\prb{|n_i- n^*/k_0 | \leq \delta \sqrt{n^*} \mid n^*} &=& \sum_{k: |k-  n^*/k_0|\leq \delta \sqrt{n^*}} 
\prb{n_i = k}  \\
&=& \frac{1}{\sqrt{2\pi \sigma_*^2}} \sum_{k: |k- n^*/k_0 |\leq \delta \sqrt{n^*}} e^{- \frac{(k-n^*/k_0)^2}{2\sigma_*}} + O\left(\frac{1}{\sqrt{n^*}}\right) \\
&\leq& \frac{2\delta \sqrt{n^*}}{\sqrt{2\pi\sigma_*^2}} + O\left(\frac{1}{\sqrt{n^*}}\right) = O(\delta).
\end{eqnarray*}
The lemma follows.
\end{proof}
Also, the Chernoff bound~\eqref{eq:Chernoff} implies that $n_i$ is not too far 
from $n^*/k_0$ too. 
\begin{lemma} \label{lem:conc}
For any $\eps>0$, there exists $C>0$ such that 
$$\prb{|n_i - n^*/k_0| > C \sqrt{n^*} \mid n^*} < \eps/6. $$
\end{lemma}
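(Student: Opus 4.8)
The plan is to apply the Chernoff bound~\eqref{eq:Chernoff} directly. Conditional on $|S^*| = n^*$, each $n_i$ with $i \in \Mcal_0$ is distributed as $\bin(n^*, 1/k_0)$, so it has mean $n^*/k_0$. Setting $q = 1/k_0$ and choosing the deviation parameter $\delta = C k_0 / \sqrt{n^*}$, the event $\{|n_i - n^*/k_0| > C\sqrt{n^*}\}$ coincides with $\{|n_i - n^* q| > \delta\, n^* q\}$. Since the relevant values of $n^*$ are of order $n$ (by $\Pcal_0$, and in any case we may restrict to $n$ large), we have $\delta \in (0,1)$ for all sufficiently large $n$, so~\eqref{eq:Chernoff} applies and yields
\[
\prb{|n_i - n^*/k_0| > C\sqrt{n^*} \mid n^*} \le 2 e^{-\delta^2 n^* q / 3} = 2 e^{-C^2 k_0 / 3}.
\]

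It then suffices to pick $C = C(\eps)$ large enough that $2 e^{-C^2 k_0/3} < \eps/6$; as $k_0 \ge 1$, any $C > \sqrt{3 \ln(12/\eps)}$ works, which proves the lemma.

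The only point requiring a word of care is the requirement $\delta < 1$, i.e. $n^* > C^2 k_0^2$, which is automatic once $n$ is large since $n^* = \Theta(n)$ on the event we care about; for the (finitely many) small values of $n^*$ the statement carries no content. There is no genuine obstacle here: this is the straightforward upper-tail half of the anti-concentration analysis, complementing the preceding application of the local limit theorem which handles the harder direction of showing $n_i$ is unlikely to be \emph{too close} to $n^*/k_0$.
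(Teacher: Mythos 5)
Your proof is correct and takes precisely the approach the paper intends: the paper gives no details beyond the one-line remark that Lemma~\ref{lem:conc} follows from the Chernoff bound~\eqref{eq:Chernoff}, and your argument is exactly the natural fleshing-out of that remark, including the correct translation $\delta = C k_0/\sqrt{n^*}$ and the observation that the resulting bound $2e^{-C^2 k_0/3}$ is independent of $n^*$. The caveat about needing $\delta < 1$ (equivalently $n^*$ large) is a legitimate point that the paper glosses over, and your handling of it is appropriate given the context.
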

Furthermore, we will show the following.
\begin{lemma} \label{lem:cstar_dot}Let $X = (X_1,\ldots, X_{k_0-1})$ be a random vector with
$X \sim \mathrm{Mult} (n^*;(1/k_0,\ldots, 1/k_0))$ and let
$\cv^* =(n^*)^{-1/2} \cdot (X- n^* (1/k_0,\ldots, 1/k_0))$.
Let $\boldsymbol{C}\in \mathbb{R}^{k_0-1}$ be such that $\boldsymbol{C}\not = \boldsymbol{0} \in \mathbb{R}^{k_0-1}$. 
For any $\eps >0$, there exists $\delta = \delta (\eps, \boldsymbol{C})>0$ such that 
$$\prb{|\cv^* \cdot \boldsymbol{C} |<\delta \mid n^*} < \eps,$$
for any $n^*$ that is sufficiently large. 
\end{lemma}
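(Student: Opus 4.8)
The plan is to recognise that $\cv^*\cdot\boldsymbol{C}$ is, after conditioning on $|S^*|=n^*$, a normalised sum of i.i.d.\ bounded scalar random variables, and then to invoke the one‑dimensional central limit theorem together with the anti‑concentration of a non‑degenerate Gaussian. Concretely, condition on $|S^*|=n^*$ and represent the initial states of the $n^*$ vertices of $S^*$ as i.i.d.\ random variables $Y_1,\ldots,Y_{n^*}$, each uniform on $\{1,\ldots,k_0\}$, so that $X_j=\sum_{\ell=1}^{n^*}\ind{Y_\ell=j}$ for $j=1,\ldots,k_0-1$. Writing $\boldsymbol{C}=(C_1,\ldots,C_{k_0-1})$, we then have
\[
\cv^*\cdot\boldsymbol{C}\;=\;\frac{1}{\sqrt{n^*}}\sum_{\ell=1}^{n^*}Z_\ell,\qquad Z_\ell:=\sum_{j=1}^{k_0-1}C_j\Bigl(\ind{Y_\ell=j}-\tfrac{1}{k_0}\Bigr),
\]
where the $Z_\ell$ are i.i.d., mean zero, and bounded in absolute value by $\sum_j|C_j|$. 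Set $\sigma_C^2:=\E{Z_1^2}=\boldsymbol{C}^T\Sigma_0\boldsymbol{C}$, where $\Sigma_0=\tfrac{1}{k_0}\un_{k_0-1}-\tfrac{1}{k_0^2}\mathbf{1}_{(k_0-1)\times(k_0-1)}$ is the covariance matrix of $(\ind{Y_1=1},\ldots,\ind{Y_1=k_0-1})$.

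The one point that genuinely uses $\boldsymbol{C}\neq\boldsymbol{0}$ is the claim $\sigma_C^2>0$. The matrix $\Sigma_0$ is positive definite: its eigenvalues are $1/k_0$ with multiplicity $k_0-2$ (on vectors orthogonal to $\mathbf{1}_{k_0-1}$) and $1/k_0^2$ on $\mathbf{1}_{k_0-1}$, all strictly positive; hence $\boldsymbol{C}\neq\boldsymbol{0}$ forces $\sigma_C^2=\boldsymbol{C}^T\Sigma_0\boldsymbol{C}>0$. (Equivalently, $Z_1$ is not a.s.\ constant: for any $j$ with $C_j\neq0$ the value of $Z_1$ on $\{Y_1=j\}$ differs from its value on $\{Y_1=k_0\}$ by exactly $C_j\neq0$.) With $\sigma_C^2>0$ the classical central limit theorem gives, as $n^*\to\infty$, $(n^*)^{-1/2}\sum_{\ell=1}^{n^*}Z_\ell\Rightarrow W\sim\mathcal{N}(0,\sigma_C^2)$. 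Since $W$ has a density bounded by $1/\sqrt{2\pi\sigma_C^2}$, for every $t\ge0$ we have $\prb{|W|\le t}\le t\sqrt{2/(\pi\sigma_C^2)}$; so we may fix $\delta=\delta(\eps,\boldsymbol{C})>0$ small enough that $\prb{|W|\le\delta}<\eps/2$ and such that $\{\pm\delta\}$ is a continuity set of the law of $W$ (which fails for at most countably many $\delta$). By the Portmanteau theorem, $\prb{|\cv^*\cdot\boldsymbol{C}|<\delta\mid n^*}\to\prb{|W|<\delta}\le\eps/2$, hence this probability is $<\eps$ for all $n^*$ sufficiently large, which is the assertion of the lemma.

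There is no serious obstacle here; the only substantive ingredient is the non‑degeneracy $\sigma_C^2>0$, and the remainder is a routine application of the CLT and Gaussian anti‑concentration. Two remarks on variants. First, in the spirit of the proof of Lemma~\ref{lem:anti_conc} one could instead apply a (multivariate) local limit theorem to the random vector $\cv^*$ and integrate the resulting Gaussian density over the slab $\{\boldsymbol{c}:|\boldsymbol{c}\cdot\boldsymbol{C}|<\delta\}$, whose measure is $O(\delta)$ in the $\boldsymbol{C}$‑direction; this works too but is heavier in $k_0-1$ dimensions and needs care about the lattice structure when $\boldsymbol{C}$ is not rational. Second, if a quantitative rate in $n^*$ were wanted, the Berry--Esseen theorem applies directly—the $Z_\ell$ are bounded, hence have a finite third moment—giving $\sup_t\bigl|\prb{(n^*)^{-1/2}\sum_\ell Z_\ell\le t}-\prb{W\le t}\bigr|=O((n^*)^{-1/2})$, from which the bound $\prb{|\cv^*\cdot\boldsymbol{C}|<\delta\mid n^*}=O(\delta/\sigma_C)+O((n^*)^{-1/2})$ follows with explicit constants.
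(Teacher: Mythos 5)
Your proof is correct, but it takes a genuinely different and cleaner route than the paper. The paper applies a multivariate local limit theorem for the multinomial distribution (Arenbaev; Siotani--Fujikoshi) to bound $\prb{\cv^*=\x}$ pointwise by a Gaussian density, first truncating to $\|\cv^*\|_\infty\le\Lambda$, and then counts lattice points in the slab $\{\x:|\x\cdot\boldsymbol{C}|<\delta,\ \|\x\|_\infty\le\Lambda\}$ to show it carries probability $O(\delta)$. You instead collapse the problem to one dimension from the outset, writing $\cv^*\cdot\boldsymbol{C}=(n^*)^{-1/2}\sum_\ell Z_\ell$ with i.i.d.\ bounded centred $Z_\ell$, observe that $\boldsymbol{C}\ne\boldsymbol{0}$ forces $\sigma_C^2=\boldsymbol{C}^T\Sigma_0\boldsymbol{C}>0$ because $\Sigma_0=\tfrac1{k_0}\un_{k_0-1}-\tfrac1{k_0^2}\mathbf{1}_{(k_0-1)\times(k_0-1)}$ is positive definite, and then invoke the classical CLT, Portmanteau, and Gaussian anti-concentration. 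This avoids both the multivariate LLT machinery and the lattice-counting argument, and it sidesteps the truncation step entirely; the Berry--Esseen variant you mention even gives a rate for free. Two small notes: since $W$ is a non-degenerate Gaussian, \emph{every} $\delta$ is automatically a continuity point of its law, so the parenthetical caveat about ``at most countably many $\delta$'' is unnecessary; and it is worth being explicit that the convergence is to a fixed law not depending on $n^*$, so that $\delta$ can indeed be chosen independently of $n^*$, as the lemma requires.
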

\begin{proof}
For $j=1,\ldots, k_0-1$, the component $X_j$  is binomially distributed with parameters 
$n^*$ and $1/k_0$.  The same holds for $X_{k_0} = n^* - \sum_{j<k_0}X_j$. 
Thus, for any $\eps >0$ and any $j\in [k_0]$, there is $\Lambda_j =\Lambda_j (\eps)>0$, such that 
$$\prb { (n^*)^{-1/2} \cdot \left|X_j - n^*/k_0 \right| > \Lambda_j } < \eps /k_0. $$ 
By the union bound, for any $\eps$ there is $\Lambda = \max \{\Lambda_j (\eps/3)\}_{j\in [k_0]}$, such that 
\begin{equation}\label{eq:cv_infty_norm} \prb{\left\| \cv^* \right\|_{\infty} > \Lambda} < \eps/3. 
\end{equation}
For a given $\eps>0$, select this $\Lambda$.

We will use the local limit theorem for the multinomial distribution. This was first shown by Arenbaev~\cite{ar:Arenbaev76} (see Lemma 2 therein), but we will use a version that was shown by Siotani and Fujikoshi in~\cite{ar:SiotFuji84}. 
Let $X=(X_1,\ldots, X_{k_0-1})$ be the random vector that follows the multinomial distribution with parameters 
$n^*$ and $\boldsymbol{q} := (1/k_0,\ldots, 1/k_0) \in \mathbb{R}^{1\times (k_0-1)}$.
 Let $\phi $ denote the pdf of the $k_0-1$-dimensional normal distribution with expected value equal to $\boldsymbol{0}\in \mathbb{R}^{k_0-1}$ and covariance matrix $\mathrm{diag} (1/k_0,\ldots, 1/k_0) 
 - \boldsymbol{q}^T \boldsymbol{q}$. 
The Local Limit Theorem for the multinomial distribution implies that  for any 
$\x = (x_1,\ldots, x_{k_0-1})$ such that $\|\x\|_{\infty} \leq \Lambda$:
\begin{equation} \label{eq:LLL} 
\prb{\cv^*=\x} \leq \frac{1}{(n^*)^{\frac{k_0-1}{2}}} \phi(\x) (1+ O((n^*)^{-1/2})).
\end{equation}

For a vector $\boldsymbol{C}= (C_1,\ldots ,C_{k_0-1})^T \in \mathbb{R}^{k_0-1}$, we define the set 
$$S_{\delta, \Lambda,\boldsymbol{C}}=\{\x = (x_1,\ldots, x_{k_0-1}) :  |\x \cdot \boldsymbol{C}  | < \delta,  \left\| \x\right\|_\infty \leq \Lambda, \ \mbox{$\forall j\in [k_0-1]$} \ n^* /k_0 +x_j \sqrt{n^*} \in \mathbb{N}_0\ \}.$$
We claim that $|S_{\delta, \Lambda, \boldsymbol{C}}| = O(\delta\cdot (n^*)^{\frac{k_0-1}{2}} \cdot \Lambda^{k_0-1})$. 
Indeed, we can select freely $x_1,\ldots, x_{k_0-2}$, whereby there are $\left( 2\Lambda \sqrt{n^*} \right)^{k_0-2}$ such choices. 
For each such choice, the component $x_{k_0-1}$ must be selected so that $- \delta < \sum_{j=1}^{k_0-2}x_j C_j + x_{k_0-1}C_{k_0-1} < \delta$. 
There are at most $2\delta \sqrt{n^*} \Lambda / |C_{k_0-1}|$ such choices.
We conclude that 
$$|S_{\delta, \Lambda, \boldsymbol{C}}| \leq\delta \cdot (n^*)^{\frac{k_0-1}{2}} \cdot (2 \Lambda)^{k_0-1}/|C_{k_0-1}|. $$
Now, choose $\delta = \frac{1}{2} \eps  (2\Lambda)^{-(k_0-1)}\cdot |C_{k_0-1}|$; this implies 
that 
\begin{equation} \label{eq:S_bound} 
|S_{\delta, \Lambda, \boldsymbol{C}}| \leq \frac{\eps}{2} \cdot (n^*)^{\frac{k_0-1}{2}}.
\end{equation} 
Therefore,
\begin{eqnarray*}
\prb{|\cv^* \cdot \boldsymbol{C} | < \delta} &\stackrel{\eqref{eq:LLL}}{\leq}& (1+o(1)) \frac{1}{(n^*)^{\frac{k_0-1}{2}}} \sum_{\x \in S_{\delta, \Lambda,\boldsymbol{C}}}\phi (\x) + \prb {\left\| \cv^* \right\|_{\infty} > \Lambda} \\ 
&\leq &(1+o(1)) \frac{1}{(n^*)^{\frac{k_0-1}{2}}} |S_{\delta, \Lambda, \boldsymbol{C}}| + \prb {\left\| \cv^* \right\|_{\infty} > \Lambda} \\
&\stackrel{\eqref{eq:cv_infty_norm},\eqref{eq:S_bound}}{\leq}&(1+o(1))\eps/2+\eps/3 < \eps.
\end{eqnarray*}
\end{proof} 

The two lemmas together with the union bound imply that for any $\eps$ there exists $\delta>0$ such that 
with probability at least $1-\eps/2$, for $i\in\Mcal_0$ we have 
\begin{equation*}  \delta < \frac{1}{\sqrt{n^*}} |n_i - n^*/k_0|\leq 1/\delta. \end{equation*}
Now,  applying Lemma~\ref{lem:cstar_dot} with $\boldsymbol{C}=(0,\ldots, 0, 1,-1,0,\ldots,0)^T$, where $1$ and $-1$ are in places $i$ and $i+1$, respectively, with $i<k_0-1$, 
or taking $\boldsymbol{C}=(1,\ldots, 1, 2)^T$, we deduce that 
 for $i=1, \ldots, k_0-1$ 
\begin{equation*} |c_i -c_{i+1}|> \delta, \end{equation*}
with $c_{k_0} = -\sum_{j<k_0}c_j$ with probability at least $1- \eps /2$. The union bound implies the statement of 
the lemma.
\end{proof}

In what follows, we will condition on $\Ev_{\delta, n}$. 
More specifically, we shall assume that the initial configuration on $S^*$ is such that for each $i \in \Mcal_0$
$$n_i = n^*/k_0 + c_i \sqrt{n^*}, $$ with $\delta <c_i < 1/\delta$ and $c_{i+1} -c_i > \delta$, for any $i=1,\ldots, k_0-1$. 
We write $S_0 \in \Ev_{\delta, n}$.

\section{The configuration after one round} 
\subsection{Estimating the probability of a certain state after one round} \label{sec:exp_round_one}
For $i \in \Mcal_0$, we let $\Scal\Scal_1^{(i)}$ denote the subset of vertices $v \in V_n$ such that $S_1(v)=i$ in a \emph{strong sense}, that is,  $n_i(v) > n_j (v)$ for any $j\in \Mcal_0$ with $j \not = i$. 
For a vertex $v\in V_n$, we will give a tight approximation of the probability that 
$v\in \Scal\Scal_1^{(i)}$, for $i \in \Mcal_0$.  
Our aim is to show the following result. 
\begin{lemma} \label{lem:conclusion}
Let $v\in V_n$. There is a constant $\Lambda >0$ such that for $n$ sufficiently large, on the events 
$S_0\in \Ev_0 \cap \Ev_{\delta, n}$ and $\Ncal^*_{v}$ and
assuming that $\cv^*$ is such that $c_1 > c_2 > \cdots > c_{k_0}$, for $i=1,\ldots, k_0-1$: 
\begin{equation} \label{eq:conclusion}
\prb{v \in \Scal \Scal_1^{(i)} \mid  n^*(v), n_v^*, \cv^*} - \prb{v \in \Scal \Scal_1^{(i+1)} \mid n^*(v), n_v^*, \cv^*} >\Lambda \cdot (c_i- c_{i+1}) p^{1/2}.
\end{equation}
\end{lemma}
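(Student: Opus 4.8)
The statement asks for a lower bound on the gap between the probability that $v$ ends up in the $i$th-largest class and the probability it ends up in the $(i{+}1)$st-largest class after one round, in a strong sense. The plan is to write both probabilities as sums over lattice points of the local limit theorem density $\phi$ from Lemma~\ref{lem:llt}, and then to exploit the ordering $c_1>c_2>\cdots>c_{k_0}$ through an involution that swaps coordinates $i$ and $i{+}1$. Concretely, fix $v$ and condition on $n^*(v), n_v^*, \cv^*$ with $\Ncal^*_v$ and $S_0\in\Ev_0\cap\Ev_{\delta,n}$ holding. By Claim~\ref{clm:hypergeom_no_large_cs}, up to an error $\exp(-\Omega(\log^2 n))$ the vector $(n_1(v),\ldots,n_{k_0}(v))$ lies in the box where $|n_j(v)-m_j|\le\log n\,(n^*(v))^{1/2}$ for all $j$, so it suffices to work inside this box. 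On that box, Lemma~\ref{lem:llt} gives
\[
\prb{(n_1(v),\ldots,n_{k_0}(v))=(s_1,\ldots,s_{k_0})\mid n_v^*,n^*(v),\cv^*}=\frac{1}{n^*(v)^{(k_0-1)/2}}\,\phi(\x)\,(1+o(p^{1/2})),
\]
with $\x=(\delta_1,\ldots,\delta_{k_0-1})^T/n^*(v)^{1/2}$ and $\delta_j=s_j-m_j$.

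\textbf{Key steps.} First I would express the event $v\in\Scal\Scal_1^{(i)}$ as $\{n_i(v)>n_j(v)\ \forall j\in\Mcal_0,\ j\ne i\}$ and sum the density over the corresponding lattice region $R_i$ (intersected with the concentration box); similarly $R_{i+1}$ for $v\in\Scal\Scal_1^{(i+1)}$. The difference of the two probabilities is then $\frac{1+o(p^{1/2})}{n^*(v)^{(k_0-1)/2}}\big(\sum_{R_i}\phi(\x)-\sum_{R_{i+1}}\phi(\x)\big)$ plus negligible terms. Second, I would set up the bijection $\tau$ on lattice points that transposes the $i$th and $(i{+}1)$st coordinates of $(n_1(v),\ldots,n_{k_0}(v))$; this maps $R_{i+1}$ onto $R_i$. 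The point is that $\phi$ is the density of a centred Gaussian whose argument involves $\delta_j = s_j - m_j$ and $m_j\approx n^*(v)p_j$ with $p_j = 1/k_0 + c_j/\sqrt{n^*}+O(1/n)$; under $\tau$ the shift $m_i\leftrightarrow m_{i+1}$ changes, so after reindexing, $\sum_{R_i}\phi(\x) - \sum_{R_{i+1}}\phi(\x)$ becomes a Riemann-sum approximation of $\int_{D}\big(\phi(\y) - \phi(\y + \text{shift})\big)\,d\y$, where $D$ is (roughly) the half-space-type region defining "coordinate $i$ is the max" and the shift is proportional to $(c_i-c_{i+1})$ in the $i$-th and $(i{+}1)$st directions. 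Third, I would Taylor-expand $\phi(\y) - \phi(\y+\text{shift})$ to first order: the leading term is $-(\text{shift})\cdot\nabla\phi(\y)$, which integrates over $D$ to something of the form $(c_i-c_{i+1})\cdot n^*(v)^{(k_0-1)/2}\cdot p^{1/2}\cdot(\text{const})$ once one accounts for the scaling $\x = \boldsymbol\delta/n^*(v)^{1/2}$ and the relation $1/\sqrt{n^*}\sim p^{1/2}/\sqrt{n^*(v)}$ coming from $\Ncal^*_v$ (this is where the factor $p^{1/2}$ enters, since $c_j/\sqrt{n^*}$ translates to a displacement of order $p^{1/2}$ in the $\x$-coordinates). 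The constant is strictly positive and bounded away from $0$ because $D$ is a fixed region on which the relevant directional derivative of the Gaussian has a definite sign (by symmetry, the "max-coordinate" region contributes a positive net mass shift). Collecting, the difference is at least $\Lambda(c_i-c_{i+1})p^{1/2}$ for a suitable constant $\Lambda>0$, uniformly over the allowed range $\delta<c_j<1/\delta$.

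\textbf{Main obstacle.} The delicate part is making the passage from the lattice sum to the Gaussian integral rigorous and uniform: one must control (a) the truncation error from replacing the unbounded regions $R_i$ by their intersection with the $\log n$-box, using Gaussian tail bounds together with Claim~\ref{clm:hypergeom_no_large_cs}; (b) the accumulated $(1+o(p^{1/2}))$ multiplicative errors from Lemma~\ref{lem:llt}, which must be shown to be genuinely lower-order than the main term $\Theta((c_i-c_{i+1})p^{1/2})$ — this is fine since $c_i-c_{i+1}>\delta$ is bounded below, but it forces the $o(p^{1/2})$ to be uniform, which Lemma~\ref{lem:llt} does provide; and (c) the sign and non-degeneracy of the limiting integral $\int_D (\text{shift})\cdot\nabla\phi$. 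For (c) I would argue that translating the mean of a Gaussian so as to increase the $i$th component relative to the others strictly increases the probability that the $i$th component is the largest — a monotonicity/coupling statement for the multivariate Gaussian that can be verified directly from the explicit covariance $\Sigma$ (which is exchangeable in the first $k_0-1$ coordinates), giving a clean positive lower bound depending only on $k_0$ and $\delta$. Once these three error-control points are handled, assembling the bound is routine.
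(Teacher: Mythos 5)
Your proposal follows essentially the same route as the paper: apply the local limit theorem (Lemma~\ref{lem:llt}) to express each conditional probability as a lattice sum of $\phi$ over a region indexed by which coordinate is largest; set up the transposition $\tau$ swapping coordinates $i$ and $i{+}1$ to compare the two regions; observe that the shift in centring between the two regions is $h_i - h_{i+1} \sim p^{1/2}(c_i - c_{i+1})$; and pass from lattice sums to Gaussian integrals. This is the paper's scaffolding.

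The place where your argument is thinner than the paper's is exactly the step you flag as the main obstacle (c) — the sign and size of the resulting integral — and I think there is a genuine gap in how you propose to handle it. You say you would ``Taylor-expand $\phi(\y)-\phi(\y+\mathrm{shift})$ to first order'' and observe the leading term integrates to something of order $(c_i-c_{i+1})p^{1/2}$. But the hypothesis is only $\limsup_n p < 1$, so $p$ may be a constant; then the shift $\h_n$ is $\Theta(1)$, and the second-order Taylor remainder is of the same order $\Theta(p)$ as the first-order term $\Theta(p^{1/2})$ — the expansion does not establish a lower bound. Moreover, $\nabla\phi = -\phi(\y)\Sigma\y$ changes sign over the ``coordinate-$i$-is-max'' region $D$, so ``the relevant directional derivative of the Gaussian has a definite sign'' is not correct pointwise, and the asserted monotonicity/coupling statement, while plausible, is precisely what needs to be proved and is not routine.

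The paper avoids both issues by an exact algebraic manipulation that makes the positivity manifest. Writing $\phi(\y-\h_n)=\phi(\y)\exp\left(\y^T\Sigma\h_n-\tfrac12\h_n^T\Sigma\h_n\right)$, and using the bijection $\y\mapsto\hat\y$ (swap of coordinates $i$ and $i{+}1$) together with the identity $\phi(\y)=\phi(\hat\y)$ (a small claim exploiting the exchangeable structure of $\Sigma$), the difference of lattice sums becomes
\[
e^{-\tfrac12\h_n^T\Sigma\h_n}\sum_{\y\in\Dcal_i}\phi(\y)\bigl(e^{\y^T\Sigma\h_n}-e^{\hat\y^T\Sigma\h_n}\bigr).
\]
The mean value theorem applied to the scalar exponential gives an \emph{exact} identity $e^{\y^T\Sigma\h_n}-e^{\hat\y^T\Sigma\h_n}=e^\xi (\y-\hat\y)^T\Sigma\h_n$, with no remainder to control, and a short computation shows $(\y-\hat\y)^T\Sigma\h_n$ is proportional to $(y_i-y_{i+1})(h_i-h_{i+1})$, which is strictly positive on $\Dcal_i$ (since there $y_i>y_{i+1}$) and of order $(c_i-c_{i+1})p^{1/2}$. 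Restricting the sum to a fixed compact subregion $\Dcal_i^{(\zeta)}$ where $y_i-y_{i+1}>\zeta$ and the remaining integral is bounded below (Claim~\ref{clm:sum_lower}) then gives the explicit lower bound $\Lambda(c_i-c_{i+1})p^{1/2}$, uniformly including when $p$ is constant. If you replace your first-order Taylor step and the unproved coupling assertion with this factor-and-MVT argument, your plan closes.
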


\begin{proof}
%
%
%


Recall that $p_i = 1/k_0 + \frac{c_i}{\sqrt{n^*}} + O(1/n)$ and we wrote 
$n_i (v) = m_i + \delta_i$, where 
$m_i = \lceil n^*(v)p_i \rceil$, for $i=1,\ldots, k_0-1$, but
$m_{k_0} = n^*(v) - \sum_{i=1}^{k-1} m_i$. 

Now, we write $m_i = n^*(v)p_i + r_i$, for 
$0\leq r_i <1$, and using that $p_i = 1/k_0+ \frac{c_i}{\sqrt{n^*}} + O(1/n)$, we have that 
\begin{eqnarray*}    
m_i  &=& 
n^*(v) /k_0 + \frac{n^*(v)}{\sqrt{n^*}} c_i +r_i + O\left(\frac{n^*(v)}{n^*}\right) \\
&=:& n^*(v)/k_0 + h_i(\cv^*, n^*(v)).
\end{eqnarray*}
In other words, we have set $h_i (\cv^*, n^*(v)) :=m_i - 
n^*(v) /k_0$. Note that $\sum_{i=1}^{k_0} h_i (\cv^*, n^*(v)) =0$ and, moreover, 
\begin{equation*}
n_i(v) =n^*(v) /k_0 + h_i(\cv^*, n^*(v)) +\delta_i. 
\end{equation*}
Rescaling by $n^*(v)^{1/2}$ and recalling that $\boldsymbol{x} = \boldsymbol{\delta} / n^*(v)^{1/2}$ we get 
\begin{eqnarray*}
n^*(v)^{-1/2} n_i (v) &=& n^*(v)^{1/2} /k_0 + \frac{h_i (\cv^*, n^*(v))}{n^*(v)^{1/2}} + \frac{\delta_i}{n^*(v)^{1/2}} \\
&=& n^*(v)^{1/2} /k_0 + \sqrt{\frac{n^*(v)}{n^*}} c_i +x_i +  O(n^*(v)^{-1/2}). 
\end{eqnarray*}
Let $\Lcal$ denote the set of points 
$\{ n^*(v)^{-1/2}\cdot (\delta_1,\ldots ,\delta_{k_0-1}): \delta_i \in \mathbb{Z}, i=1,\ldots, k_0-1 \}$ and let $\Lcal'$ denote
its restriction $\{ (x_1,\ldots, x_{k_0-1}) \in \Lcal  \ : \ 
|x_i|\leq \log n, i=1,\ldots, k_0-1 \}$. 
By Claim~\ref{clm:hypergeom_no_large_cs}, conditional on a realisation of $\{S_0^{-1}(1),\ldots, S_0^{-1}(k_0)\}$ that satisfies the event $\Ev_{\delta,n}\cap \Ev_0$, and on the value of $n^*(v)$ we have 
\begin{align}
&\prb{\mbox{for some $i \in \Mcal_0$ }(n^*(v))^{-1/2}|n_i(v)- n^*(v) (n_i/n^*)|> \log n \mid n^*(v), n_v^*,\cv^*} =\nonumber \\
&\hspace{2cm} \exp\left({-\Omega (\log^2 n)}\right) =o(p^{1/2}). \label{eq:outside_L'}
\end{align} 
In other words, realisations of $(n^*(v))^{-1/2} (n_i(v))_{i\in \Mcal_0}$ that do not belong to $\Lcal'$ have probability $o(p^{1/2})$. 
For $i=1,\dots, k_0$, we let $\Dcal_{i} = \{\boldsymbol{x} \in \mathbb{R}^{k_0-1} \ : \  x_i > x_j, \mbox{for all $j=1,\ldots ,k_0$ such that $j\not = i$} \}$ and 
$\Dcal_{i}' = \{\boldsymbol{x} \in \mathbb{R}^{k_0-1} \ : \  x_i + h_i(\cv^*, n^*(v)) > x_j + h_j(\cv^*, n^*(v)), \mbox{for all $j=1,\ldots, k_0$ such that $j \not =i$}\}$, where $x_{k_0} = - \sum_{i <k_0}x_i$.

Using Lemma~\ref{lem:llt} and~\eqref{eq:outside_L'}, we can then write 
\begin{eqnarray} \label{eq:prob_i}
\prb{v \in \Scal \Scal_1^{(i)} \mid n^*(v),n_v^*,  \cv^*} =(1+o(p^{1/2}))\cdot \frac{1}{n^*(v)^{\frac{k_0-1}{2}}} \sum_{\x \in \Dcal_i' \cap \Lcal'} \phi(\x) + o(p^{1/2}). 
\end{eqnarray}

Let us now assume, without loss of generality, that $c_1 > \ldots > c_{k_0}$. 
Recall that if the event $\Ev_{n,\delta}$ is realised, then $c_{i} - c_{i+1} > \delta$  for $i=1,\ldots, k_0-1$. 
(By Lemma~\ref{lem:anti_conc}, this event occurs with probability $1-\eps$ for 
any $\eps>0$ and for some $\delta = \delta (\eps)$ - cf. Section~\ref{sec:anticonc}.)

We will show the following. 
\begin{lemma} \label{lem:int_decomp} There is $\Lambda >0$ such that for any $n$ sufficiently large, 
on the events $\Pcal_0, \Ev_{\delta,n}$ and $\Ncal_v^*$ 
we have: 
\begin{equation*} \label{eq:phi_perturb} 
n^*(v)^{-\frac{k_0-1}{2}}\cdot \left( \sum_{\x \Dcal_i'\cap \Lcal'}\phi (\x)  -  \sum_{\x \in \Dcal_{i+1}'\cap \Lcal'} \phi (\x) \right) > \Lambda (c_i - c_{i+1}) p^{1/2}.
\end{equation*}
\end{lemma}
\begin{proof}
Let $\h_n = (h_1(\cv^*, n^*(v)),\ldots, h_{k_0-1} (\cv^*, n^*(v)))^T$.
We use the transformation $\y = \x + \h_n$ and write 
\begin{eqnarray*}
\sum_{\x \in \Dcal_i'\cap \Lcal'} \phi (\x)  = \sum_{\y \in \Dcal_i : \ \y-\h_n \in \Lcal'} \phi (\y - \h_n). 
\end{eqnarray*}
Our aim now is to replace $\phi (\y - \h_n)$ in the above sum with $\phi (\y)$. 
To estimate the error, we need to consider the difference $\phi (\y- \h_n) - \phi (\y)$. 
We write: 
\begin{eqnarray*} 
\phi (\y - \h_n)  &=& \phi (\y) \cdot \exp \left( \y^T \Sigma \h_n - \frac{1}{2} \h_n^T \Sigma \h_n \right).
\end{eqnarray*}
For $i=1, \ldots, k_0-2$, we will define a bijection between the sets $\Dcal_i$ and $\Dcal_{i+1}$:  
$$\y  = (y_1,\ldots, y_i,y_{i+1},\ldots, y_{k_0-1}) \mapsto 
\hat{\y} = (y_1,\ldots, y_{i+1},y_i,\ldots, y_{k_0-1}) \in \Dcal_{i+1},$$ 
whereas for $i=k_0-1$ we set 
$$\y  = (y_1,\ldots, y_i,y_{i+1},\ldots, y_{k_0-1}) \mapsto 
\hat{\y} = (y_1,\ldots, y_{i+1},y_i,\ldots, -\sum_{i<k_0} y_i) \in \Dcal_{k_0}.$$
\begin{claim} For any $\y \in \Dcal_i$, with $i\in \{1, \ldots, k_0-1\}$, we have 
$$ \y^T \Sigma \y = \hat{\y}^T \Sigma \hat{\y}. $$
\end{claim}
\begin{proof}
Indeed, with $\Sigma' = \left[ \begin{array}{ccc} 
2 &\cdots & 1 \\
\vdots & \ddots & \vdots \\
1 & \cdots & 2
\end{array} \right] $, it suffices to prove that $\y^T \Sigma' \y = \hat{\y}^T \Sigma' \hat{\y}$, as $\Sigma$ is a multiple of $\Sigma'$ by a scalar. 
For $i< k_0-1$, we have 
\begin{eqnarray*}
\y^T \Sigma' \y &=& 2 \sum_{j=1}^{k_0-1} y_i^2 + \sum_{j' \not = j} y_jy_{j'} \\
&=& 2 \sum_{j=1}^{k_0-1} \hat{y}_i^2 + \sum_{j' \not = j} \hat{y}_j \hat{y}_{j'}.
\end{eqnarray*}
For $i=k_0-1$,  
\begin{eqnarray*}
\hat{\y}^T \Sigma' \hat{\y} &=& 2 \sum_{j=1}^{k_0-1} \hat{y}_i^2 + \sum_{j' \not = j} \hat{y}_j \hat{y}_{j'} \\ 
&=&  2 \sum_{j=1}^{k_0-2} y_i^2  +2 \left(\sum_{i<k_0}y_i\right)^2 + \sum_{j' \not = j < k_0-1} \hat{y}_j \hat{y}_{j'} 
+2 \hat{y}_{k_0-1} \sum_{i< k_0-1} y_i \\
&=& 2 \sum_{j=1}^{k_0-2} y_i^2  +2 \left(\sum_{i<k_0}y_i\right)^2 + \sum_{j' \not = j < k_0-1} y_j y_{j'} 
-2 \left(\sum_{i<k_0} y_i\right) \left( \sum_{i< k_0-1} y_i\right) \\
&=& 2 \sum_{j=1}^{k_0-2} y_i^2  +2 \left(\sum_{i<k_0}y_i\right)^2 + \sum_{j' \not = j < k_0-1} y_j y_{j'} 
-2 \left(\sum_{i<k_0} y_i\right) \left( \sum_{i< k_0} y_i  - y_{k_0-1}\right) \\
&=& 2 \sum_{j=1}^{k_0-2} y_i^2  + \sum_{j' \not = j < k_0-1} y_j y_{j'} +2 y_{k_0-1}^2 +2 y_{k_0-1} \sum_{j<k_0-1}y_j\\
&=& 2 \sum_{j=1}^{k_0-1} y_i^2 + \sum_{j' \not = j} y_jy_{j'}  =\y^T \Sigma' \y.
\end{eqnarray*}
\end{proof}
The above claim implies that $\phi (\y) = \phi (\hat{\y} )$, for any $\y \in \Dcal_i$.  
We will use this in bounding from below the difference: 
\begin{eqnarray*} 
 \lefteqn{\sum_{\y \in \Dcal_i : \ \y-\h_n \in \Lcal'} \phi (\y - \h_n) -  \sum_{\y \in \Dcal_{i+1} : \ \y-\h_n \in \Lcal'} \phi (\y - \h_n)=} \\
 &=& \sum_{\y \in \Dcal_i : \ \y-\h_n \in \Lcal'} \phi (\y) e^{\y^T \Sigma \h_n - \frac{1}{2} \h_n^T \Sigma \h_n } -  \sum_{\y \in \Dcal_{i+1} : \ \y-\h_n \in \Lcal'} \phi (\y) e^{\y^T \Sigma \h_n - \frac{1}{2} \h_n^T \Sigma \h_n }.
\end{eqnarray*}
Using the above bijection, the last term can be written as: 
\begin{eqnarray*}
\sum_{\y \in \Dcal_{i+1} : \ \y-\h_n \in \Lcal'} \phi (\y) e^{\y^T \Sigma \h_n - \frac{1}{2} \h_n^T \Sigma \h_n } &=& 
\sum_{\y \in \Dcal_{i} : \ \y-\h_n \in \Lcal'} \phi (\hat{\y}) e^{\hat{\y}^T \Sigma \h_n - \frac{1}{2} \h_n^T \Sigma \h_n }\\
&\stackrel{\phi (\y) = \phi (\hat{\y})}{=}& 
\sum_{\y \in \Dcal_{i} : \ \y-\h_n \in \Lcal'} \phi (\y) e^{\hat{\y}^T \Sigma \h_n - \frac{1}{2} \h_n^T \Sigma \h_n }.
\end{eqnarray*}
Thereby, the difference can be written as:
\begin{eqnarray}
 \lefteqn{\sum_{\y \in \Dcal_i : \ \y-\h_n \in \Lcal'} \phi (\y - \h_n) -  \sum_{\y \in \Dcal_{i+1} : \ \y-\h_n \in \Lcal'} \phi (\y - \h_n)=} \nonumber \\
 &=& \sum_{\y \in \Dcal_i : \ \y-\h_n \in \Lcal'} \phi (\y) e^{\y^T \Sigma \h_n - \frac{1}{2} \h_n^T \Sigma \h_n } - 
 \sum_{\y \in \Dcal_{i} : \ \y-\h_n \in \Lcal'} \phi (\y) e^{\hat{\y}^T \Sigma \h_n - \frac{1}{2} \h_n^T \Sigma \h_n } \nonumber \\
 &=& e^{- \frac{1}{2} \h_n^T \Sigma \h_n}\cdot \sum_{\y \in \Dcal_i : \ \y-\h_n \in \Lcal'} \phi (\y)  
 \left(e^{\y^T \Sigma \h_n} - e^{\hat{\y}^T \Sigma \h_n} \right).  \nonumber \\ 
 & & \label{eq:sum_diff}
\end{eqnarray}
By the mean value theorem, for any $\y \in \Dcal_i$, there is $\xi$ such that 
$\max\{\y^T \Sigma \h_n, \hat{\y}^T \Sigma \h_n\} > \xi > \min \{ \y^T \Sigma \h_n, \hat{\y}^T \Sigma \h_n\}$ and 
\begin{equation} \label{eq:meanvalue_app}
e^{\y^T \Sigma \h_n} - e^{\hat{\y}^T \Sigma \h_n}= e^{\xi}\cdot  \left( (\y -\hat{\y})^T\Sigma \h_n \right).
\end{equation}
Suppose, firstly, that $i<k_0-1$. 
With $\e_i \in \mathbb{R}^{k_0-1}$ being the vector that has 0s everywhere apart from the $i$th component which is equal to 1, we express:
$$\y -\hat{\y} = (y_i - y_{i+1}) (\e_i - \e_{i+1}). $$
As, for each $i=1,\ldots, k_0-1$, 
 $\e_i \Sigma' \h_n =  (1,\ldots, 1,2,1,\ldots, 1) \h_n = \sum_{j=1}^{k_0-1} h_j + h_i$, it follows that 
 $$ (\e_i - \e_{i+1})^T \Sigma' \h_n = h_i-h_{i+1}.$$ 
 Therefore, 
 $$(\y -\hat{\y})^T\Sigma' \h_n= (y_i-y_{i+1}) (h_i - h_{i+1}). $$
 Now, suppose that $i=k_0-1$. 
 In this case and recalling that $\hat{y}_{k_0-1} =- \sum_{j<k_0} y_j = y_{k_0}$ we have 
 $$\y -\hat{\y} =(y_{k_0-1} - \hat{y}_{k_0-1}) \e_{k_0-1} = (y_{k_0-1} - y_{k_0}) \e_{k_0-1}. $$ 
 But $\e_{k_0-1}^T \Sigma' \h_n = \sum_{i=1}^{k_0-1}h_i  + h_{k_0-1}= - h_{k_0} + h_{k_0-1}$.
 Therefore, 
 $$ (\y -\hat{\y})^T\Sigma' \h_n = (y_{k_0-1} - y_{k_0}) (h_{k_0-1} - h_{k_0}).$$
 
 Now, recall that $h_i = \sqrt{\frac{n^*(v)}{n^*}}c_i + o(p^{1/2})$ and
 $n^*(v)/n^* = p (1+o(1))$. These imply that
 $$ h_i -h_{i+1} = p^{1/2}(c_i - c_{i+1}) (1+o(1)).$$ 
 So, uniformly over $\y \in \Dcal_i$,
 $$ (\y -\hat{\y})^T\Sigma \h_n=\frac{k_0}{1-n^*(v)/n_v^*} (y_i-y_{i+1})p^{1/2}(c_i - c_{i+1}) (1+o(1))$$
 and, substituting this into~\eqref{eq:meanvalue_app}, Equation~\eqref{eq:sum_diff} yields: 
 \begin{eqnarray*} 
 \lefteqn{\sum_{\y \in \Dcal_i : \ \y-\h_n \in \Lcal'} \phi (\y - \h_n) -  \sum_{\y \in \Dcal_{i+1} : \ \y-\h_n \in \Lcal'} \phi (\y - \h_n)=} \nonumber \\ 
 &=& \nonumber \frac{k_0}{1-n^*(v)/n_v^*}  (1+o(1)) p^{1/2} (c_i-c_{i+1}) e^{- \frac{1}{2} \h_n^T \Sigma \h_n}\cdot \sum_{\y \in \Dcal_i : \ \y-\h_n \in \Lcal'} \phi (\y)  e^{\xi} 
  (y_i-y_{i+1}).
 \end{eqnarray*}
 Now, for some $0< \zeta < 1$, let $\Dcal_{i}^{(\zeta)} = \{ \y \in \Dcal_i \ : \ y_i -y_{i+1}> \zeta \ \mbox{and} \ \zeta < y_{j} <1/\zeta \ \mbox{for all $j=1,\ldots, k_0-1$} \}$.  Since $\|\cv^* \|_{\infty} < 1/\delta$, there is a $\Xi = \Xi (\zeta, \eps ,\cv^*) >0$ such that $e^\xi > \Xi$ for any  $\x \in \Dcal_i^{(\zeta)}$. 
 Moreover, since any $\y \in \Dcal_i$ satisfies $y_i -y_{i+1}>0$, we can bound the last sum from below restricting it 
 on $\Dcal_{i}^{(\zeta)}$: 
 \begin{eqnarray}
 \sum_{\y \in \Dcal_i : \ \y-\h_n \in \Lcal'} \phi (\y)  e^{\xi} 
  (y_i-y_{i+1}) &\geq & 
  \sum_{\x \in \Dcal_i^{(\zeta)} : \ \y-\h_n \in \Lcal'} \phi (\y)  e^{\xi} 
  (y_i-y_{i+1}) \nonumber \\ 
  &\geq & (\Xi \zeta ) \cdot  \sum_{\x \in \Dcal_i^{(\zeta)} : \ \x-\h_n \in \Lcal'} \phi (\y). \nonumber 
  \end{eqnarray}
  We use this lower bound in~\eqref{eq:sum_diff} and conclude that 
\begin{eqnarray} 
\lefteqn{\sum_{\y \in \Dcal_i : \ \y-\h_n \in \Lcal'} \phi (\y - \h_n) -  \sum_{\y \in \Dcal_{i+1} : \ \y-\h_n \in \Lcal'} \phi (\y - \h_n)\geq} \nonumber \\
& &(1+o(1)) \frac{k_0}{1-n^*(v)/n_v^*} \cdot p^{1/2} (c_i - c_{i+1}) \cdot (\Xi \zeta) \cdot \sum_{\y \in \Dcal_i^{(\zeta)} : \ \y-\h_n \in \Lcal'} \phi (\y).
\nonumber
\end{eqnarray}
We will approximate the last sum (scaled by $n^*(v)^{- \frac{k_0-1}{2}}$) by an integral of the multivariate gaussian pdf over $\Dcal_i^{(\zeta)}$. In particular, we will show the following. 
\begin{claim} \label{clm:sum_lower}  If $n^*(v)$ is such that $\Ncal_v^*$ is realised and $n^*$ is such that 
$\Pcal_0$ is realised, then
$$ \liminf_{n \to \infty} \frac{1}{n^*(v)^{\frac{k_0-1}{2}}}\cdot \sum_{\y \in \Dcal_i^{(\zeta)} : \ \y-\h_n \in \Lcal'} \phi (\y)>0.$$
\end{claim}
This concludes the proof of the lemma. 

\end{proof}
The proof of the Lemma~\ref{lem:int_decomp} together with~\eqref{eq:prob_i} yield that for $n$ sufficiently large and $i=1,\ldots, k_0-1$: 
\begin{equation*} 
\prb{v \in \Scal \Scal_1^{(i)} \mid  n^*(v), n_v^*, \cv^*} - \prb{v \in \Scal \Scal_1^{(i+1)} \mid n^*(v), n_v^*, \cv^*} >\Lambda \cdot (c_i- c_{i+1}) p^{1/2},
\end{equation*}
under the assumption that $c_1 > c_2 > \cdots > c_{k_0}$.
\end{proof}

\begin{proof}[Proof of Claim~\ref{clm:sum_lower}]

We argue by considering an approximation of the sum of $\phi(\y)$, for $\y\in \Dcal_{i}^{(\zeta)}$ such that 
 $\y -\h_n \in \mathcal{L'}$, by an appropriate Gaussian integral. For any $\x= (x_1,\ldots, x_{k_0-1})$, let $\Bcal_{\boldsymbol{\boldsymbol{x}}}$ denote the box 
$\prod_{j=1}^{k_0-1} [x_i, x_i+n^*(v)^{-1/2})$. 

\begin{claim}\label{lem:MVTEstimate}
For any $\boldsymbol{x}', \boldsymbol{x}'' \in \Bcal_{\boldsymbol{x}}$ we have 
$$\lvert \phi(\boldsymbol{x}') - \phi(\boldsymbol{x}'') \rvert = O\left(\frac{1}{\sqrt{n^*(v)}}\right),$$
uniformly over the choices of $\boldsymbol{x}, \boldsymbol{x}'$ and $\boldsymbol{x}''$.
\end{claim}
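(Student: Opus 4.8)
The plan is to bound the gradient of $\phi$ uniformly in $n$ and then invoke the mean value theorem on the (convex) box $\Bcal_{\x}$. First I would record that $\phi$ is smooth on $\mathbb{R}^{k_0-1}$ with $\nabla\phi(\x) = -\phi(\x)\,\Sigma\x$, so that $\|\nabla\phi(\x)\| \le \phi(\x)\,\|\Sigma\|_{\mathrm{op}}\,\|\x\|$. Bounding $\phi(\x) \le \bigl(|\Sigma|/2\pi\bigr)^{(k_0-1)/2}\exp\!\bigl(-\tfrac12\lambda_{\min}(\Sigma)\|\x\|^2\bigr)$ gives
\[
\|\nabla\phi(\x)\| \;\le\; \Big(\tfrac{|\Sigma|}{2\pi}\Big)^{\frac{k_0-1}{2}} \|\Sigma\|_{\mathrm{op}}\, \|\x\|\, e^{-\frac12 \lambda_{\min}(\Sigma)\|\x\|^2},
\]
which, since the Gaussian factor decays super-polynomially, is bounded by a constant $M=M(\Sigma)$ for \emph{every} $\x$; no restriction $\|\x\|\le\log n$ is needed here.

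The one point that requires care is that $M$ should be independent of $n$, even though $\Sigma$ itself depends on $n$ through $n^*(v)/n_v^*$. On $\Pcal_0$ and $\Ncal^*_v$ we have $n^*(v)/n_v^* = p(1+o(1))$, and $\limsup_{n\to\infty} p < 1$, so there is $\eps_0>0$ with $\eps_0 \le 1 - n^*(v)/n_v^* \le 1$ for all large $n$. Hence the scalar $k_0/(1-n^*(v)/n_v^*)$ lies in $[k_0,\,k_0/\eps_0]$; since the eigenvalues of $\un_{k_0-1}+\mathbf{1}_{(k_0-1)\times(k_0-1)}$ are $k_0$ (simple) and $1$ (multiplicity $k_0-2$), both $\|\Sigma\|_{\mathrm{op}}$ and $\lambda_{\min}(\Sigma)\ge k_0$ are pinned between constants depending only on $k_0$ and $\eps_0$, and likewise $|\Sigma| = k_0^{k_0}/(1-n^*(v)/n_v^*)^{k_0-1} \in [k_0^{k_0},\,k_0^{k_0}/\eps_0^{k_0-1}]$, so the normalising constant is bounded above and below. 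Feeding these uniform bounds into the displayed inequality and using $\sup_{t\ge 0} t\,e^{-k_0 t^2/2} = (k_0 e)^{-1/2}$ yields $M=M(k_0,\eps_0)$.

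Finally, $\Bcal_{\x} = \prod_{j=1}^{k_0-1}[x_j, x_j + n^*(v)^{-1/2})$ is convex with diameter $\sqrt{k_0-1}\,n^*(v)^{-1/2}$, so for $\x',\x'' \in \Bcal_{\x}$ the segment $[\x',\x'']$ stays inside $\Bcal_{\x}$ and the mean value theorem gives $|\phi(\x') - \phi(\x'')| \le M\,\|\x' - \x''\| \le M\sqrt{k_0-1}\,n^*(v)^{-1/2} = O(n^*(v)^{-1/2})$, uniformly over $\x,\x',\x''$. The calculation itself is routine; the only genuinely delicate step is the uniformity bookkeeping of the middle paragraph, where one must ensure that the $n$-dependence of $\Sigma$ does not leak into the $O(\cdot)$ — and it is exactly the hypothesis $\limsup p<1$, already exploited throughout Section~2 to keep quantities such as $1-f_s$ bounded away from $0$, that makes this go through.
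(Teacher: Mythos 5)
Your proof takes the same overall route as the paper (multivariate mean value theorem plus a uniform bound on $\|\nabla\phi\|$ over the convex box $\Bcal_{\x}$), but your execution is actually more careful, and correct where the paper's own argument is sloppy. The paper writes $\nabla\phi(\boldsymbol{z}) = \phi(\boldsymbol{z})\,\nabla h(\boldsymbol{z})$ with $h(\boldsymbol{z}) = -\tfrac12 \boldsymbol{z}^T\Sigma\boldsymbol{z}$ and then asserts $\|\nabla h(\boldsymbol{z})\|_2 = O(\|\boldsymbol{z}\|_\infty^2) = O(1)$ together with $\phi(\boldsymbol{z})\le 1$. Both assertions are problematic as stated: $\nabla h(\boldsymbol{z}) = -\Sigma\boldsymbol{z}$ is \emph{linear}, so its norm grows like $\|\boldsymbol{z}\|$ and is certainly not $O(1)$ uniformly in $\boldsymbol{z}$; and since $|\Sigma|\ge k_0^{k_0} > 2\pi$ for $k_0\ge 3$, the normalising constant $(|\Sigma|/2\pi)^{(k_0-1)/2}$ exceeds $1$, so $\phi(\boldsymbol{0})>1$ and the bound $\phi\le 1$ also fails. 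The conclusion is nonetheless true, and you supply the missing idea: it is the \emph{product} $\phi(\boldsymbol{z})\,\|\Sigma\boldsymbol{z}\|$ that is uniformly bounded, because the Gaussian factor $e^{-\frac12\lambda_{\min}(\Sigma)\|\boldsymbol{z}\|^2}$ dominates the linear growth $\|\boldsymbol{z}\|$; you make this precise via $\sup_{t\ge 0} t\,e^{-ct^2} <\infty$. You also do something the paper leaves implicit and which genuinely matters for the claimed uniformity: you track the $n$-dependence of $\Sigma$ through $1 - n^*(v)/n_v^*$ and invoke $\limsup_n p<1$ (via $\Pcal_0$ and $\Ncal^*_v$) to keep $\lambda_{\min}(\Sigma)$, $\|\Sigma\|_{\mathrm{op}}$ and $|\Sigma|$ pinned in $n$-free intervals, so the resulting constant $M$ depends only on $k_0$ and the lower bound on $1-p$. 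In short: same decomposition, but your bookkeeping patches two real (if minor) slips in the published proof and makes the ``uniformly over $\boldsymbol{x}$'' clause honest.
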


\begin{proof}
Suppose that $\boldsymbol{x}' = (x'_1, \ldots ,x'_{k_0-1})^T$ and $\boldsymbol{x}'' = (x''_1,\ldots, x''_{k_0-1})^T,$ both belonging to $\mathcal{B}_{\boldsymbol{x}}$. 
We appeal to the multidimensional mean value theorem: there exists some $c \in (0,1)$ such that,
\[
\lvert \phi(\x') - \phi(\x'') \rvert \leq \lVert \nabla \phi(\left(1-c)\x' + c \x'' \right) \rVert_{2} \lVert \x' - \x'' \rVert_{2}.
\]
For $i=1,\ldots,k_0-1$, we set $z_i = x'_i (1-c) + x''_i c$ and consider the order of $\lVert \nabla \phi((z_1,\ldots z_{k_0-1})) \rVert_{2}.$ We remark that the exponent in $\phi((z_1,\ldots z_{k_0-1}))$ is a quadratic form in $\boldsymbol{z} = (z_1,\ldots z_{k_0-1})^T$. We set
\[
-\frac{1}{2}\boldsymbol{z}^T \Sigma \boldsymbol{z} =: h(\boldsymbol{z}).
\]
Hence it follows from the definition of $\phi$ that
\[
\lVert \nabla \phi(\boldsymbol{z}) \rVert_{2} = \lvert \phi(\boldsymbol{z}) \rvert \lVert \nabla h(\boldsymbol{z}) \rVert_{2}.
\]
Now we observe that as $h$ is quadratic in $z_1,\ldots, z_{k_0-1}$ it follows that,
\[\lVert \nabla h(\boldsymbol{z}) \rVert_{2} = O(\left| \boldsymbol{z} \right|_{\infty}^2) = O(1).
\]
We also have that $\phi(\boldsymbol{z}) \leq 1$. Therefore, 
\[
\lvert \phi(\boldsymbol{x}') - \phi(\boldsymbol{x''}) \rvert = O(\lVert \boldsymbol{x}' - \boldsymbol{x}''\rVert_{2}).
\]
However, as $\boldsymbol{x}'$ and $\boldsymbol{x}''\in \mathcal{B}_{\boldsymbol{x}}$ then we have that $\lVert \boldsymbol{x}' - \boldsymbol{x}''\rVert_{2} = O\left(1/\sqrt{n^*(v)}\right),$ which concludes the proof of this claim.
\end{proof}

Thus, applying the above, we can consider an approximation of the above sums by a Gaussian integral. Recall that 
$\mathcal{B}^{\infty}_{\log{n}} = \{\boldsymbol{z} = (z_1,\ldots, z_{k_0-1}) \in \mathbb{R}^{k_0-1} : \lVert \boldsymbol{z} \rVert_\infty < \log{n} \}$.

%
%
%
%
%
 For each $\x \in \Lcal'$ such that $\x + \h_n \in \Dcal_i^{(\zeta)}$,  let us consider
\begin{equation*}
\boldsymbol{z}_{\textrm{max}}(\boldsymbol{x}) \in  \argmax_{\boldsymbol{z} \in \Bcal_{\boldsymbol{x}}} \{ \phi(\boldsymbol{z}) \},
\end{equation*}
a maximiser of $\phi$ inside $\Bcal_{\boldsymbol{x}}$. 

%
We write: 
\begin{align*}
  &   \frac{1}{n^*(v)^{\frac{k_0-1}{2}}} \cdot 
 \sum_{\y \in \mathcal{D}_i^{(\zeta)}: \y - \h_n \in \Lcal'}
 \phi (\y) =
 \frac{1}{n^*(v)^{\frac{k_0-1}{2}}} \cdot 
 \sum_{\y \in \mathcal{D}_i^{(\zeta)}: \y - \h_n \in \Lcal'}
 \phi (\boldsymbol{z}_\textrm{max}(\y)) \\ 
 &\hspace{2cm}-\frac{1}{n^*(v)^{\frac{k_0-1}{2}}} \cdot 
 \sum_{\y \in \mathcal{D}_i^{(\zeta)}: \y - \h_n \in \Lcal'}
  \lvert\phi( \boldsymbol{z}_\textrm{max}(\y)) -\phi(\y)  \rvert.
\end{align*}
The first sum is asymptotically bounded from below by an integral:
\begin{equation*}
\liminf_{n\to \infty} 
\frac{1}{n^*(v)^{\frac{k_0-1}{2}}}\sum_{\y \in \mathcal{D}_i^{(\zeta)}: \y - \h_n \in \Lcal'}
 \phi (\boldsymbol{z}_\textrm{max}(\boldsymbol{x})) \geq \int_{\Dcal_i^{(\zeta)}} \phi(\x) \mathrm{d} \x.
\end{equation*}
For the second term, let us observe that $|\{\y \in \mathcal{D}_i^{(\zeta)}: \y - \h_n \in \Lcal' \}|  \leq |\Lcal'| = O(n^*(v)^{(k_0-1)/2}\log^{2}{n})$. 
By Claim~\ref{lem:MVTEstimate}
$$  \lvert\phi( \boldsymbol{z}_\textrm{max}(\y)) -\phi(\y)  \rvert = O\left(n^*(v)^{-1/2}\right).
$$
Therefore,
\begin{eqnarray*}
\lefteqn{\frac{1}{n^*(v)^{\frac{k_0-1}{2}}} \cdot 
 \sum_{\y \in \mathcal{D}_i^{(\zeta)}: \y - \h_n \in \Lcal'}
  \lvert\phi( \boldsymbol{z}_\textrm{max}(\x)) -\phi(\x)  \rvert}  \\
&& \hspace{1cm}=
  O\left(n^*(v)^{-1/2 - (k_0-1)/2}\right)  \cdot |\{\y \in \mathcal{D}_i^{(\zeta)}: \y - \h_n \in \Lcal' \}| \\ 
&& \hspace{1cm}=  O(1) \cdot n^*(v)^{-1/2} \log^2 n. 
\end{eqnarray*}

As $p \geq n^{-1/2}\log^{3}n$, it follows that
$p^{1/2} \geq  (np)^{-1/2} \log^3 n$ and, since $\Ncal_v^*$ is realised,
\[
\frac{\log^2 n}{\sqrt{n^*(v)}}= O\left( \frac{\log^2 n}{(np)^{1/2}}\right) = o(p^{1/2}).
\]
We conclude that 
\begin{equation*}
\liminf_{n\to \infty}   \frac{1}{n^*(v)^{\frac{k_0-1}{2}}} \cdot 
 \sum_{\y \in \mathcal{D}_i^{(\zeta)}: \y - \h_n \in \Lcal'}
 \phi (\x)  \geq  \int_{\Dcal_i^{(\zeta)} } \phi(\y) \mathrm{d} \y - o(p^{1/2}).
\end{equation*}
Since $\limsup_{n\to\infty}\| \h_n \|_{\infty} < \infty$, we conclude that for some $\kappa >0$ we have  $\phi (\y)> \kappa$ for any 
$\y \in \Dcal_i^{(\zeta)}$. 
Therefore, 
$$ \liminf_{n\to\infty}  \int_{\Dcal_i^{(\zeta)}} \phi(\y) \mathrm{d} \y \geq \kappa \cdot \mathrm{Leb} (\Dcal_i^{(\zeta)}) >0,$$
and the claim follows.
\end{proof}

\subsection{Concentration after the first step} \label{sec:second_moment}
Let $X_i$ denote the random variable that is the number of vertices that obtain state $i \in \Mcal_0$ during round 1 in a strong sense. We will condition on $\cv^*$ which satisfies $\Ev_{n,\delta}$ and moreover 
$c_i - c_{i+1} >\delta$, for $i=1,\ldots, k_0-1$. 
Our aim in this section is to show that on this conditional space
 for any $\gamma >0$, a.a.s., for $i=1,\ldots, k_0-1$ 
\begin{equation}\label{eqn:X_0Bounds} 
  X_i - X_{i+1} \geq  \Omega (1) n p^{1/2} (c_{i+1}-c_i) - \gamma np^{1/2}.  
\end{equation}
We start with a lower bound on the conditional expectation of the difference $X_i - X_{i+1}$:
\begin{equation} \label{eq:X_0-exp}
\E{X_i - X_{i+1}| n^*, \cv^*} \geq  \Omega (1) n p^{1/2} (c_{i+1}-c_i).
\end{equation}

\begin{proof}[Proof of~\eqref{eq:X_0-exp}]
To see this, we write $X_i =\sum_{v\in V_n} \ind{v  \in \Scal\Scal_1^{(i)}}$. With this we can express:
\begin{eqnarray*} 
X_i - X_{i+1} = \sum_{v\in V_n} \ind{v  \in \Scal\Scal_1^{(i)}} - \sum_{v\in V_n} \ind{v  \in \Scal\Scal_1^{(i+1)}}.
\end{eqnarray*}
Thus, 
$$\E{X_i - X_{i+1}| n^*, \cv^*}  = \E{  \sum_{v\in V_n} \ind{v  \in \Scal\Scal_1^{(i)}} - \sum_{v\in V_n} \ind{v  \in \Scal\Scal_1^{(i+1)}} | n^*, \cv^*}.$$
Using the linearity of the conditional expectation, it is sufficient to show that  
$$ \E {\ind{v  \in \Scal\Scal_1^{(i)}} -  \ind{v  \in \Scal\Scal_1^{(i+1)}} | n^*, \cv^*} =\Omega (1) p^{1/2} (c_i -c_{i+1}).$$
We write 
\begin{eqnarray} 
&& \E {\ind{v  \in \Scal\Scal_1^{(i)}} -  \ind{v  \in \Scal\Scal_1^{(i+1)}} | n^*, \cv^*} = \nonumber \\
&&\hspace{1cm} \E{\E{\ind{v  \in \Scal\Scal_1^{(i)}} -  \ind{v  \in \Scal\Scal_1^{(i+1)}}| n^*(v)}| n^*, \cv^*} \nonumber \\ &\geq&  \E{\ind{\Ncal_v^*}\E{\ind{v  \in \Scal\Scal_1^{(i)}} -  \ind{v  \in \Scal\Scal_1^{(i+1)}}| n^*(v)}| n^*, \cv^*} 
\nonumber \\
&& \hspace{1.5cm} - \E{1-\ind{\Ncal_v^*} | n^*,\cv^*} \nonumber \\
&\stackrel{\eqref{eq:Nv_prob}}{=} &  \E{\ind{\Ncal_v^*}\E{\ind{v  \in \Scal\Scal_1^{(i)}} -  \ind{v  \in \Scal\Scal_1^{(i+1)}}| n^*(v)}| n^*, \cv^*} - o(1/n), \label{eq:cond_of_cond}
\end{eqnarray}
since $n^*(v)$ is independent of $\cv^*$ and therefore $\E{1-\ind{\Ncal_v^*} | n^*,\cv^*} = \E{1-\ind{\Ncal_v^*} | n^*}$.  
Now, by Lemma~\ref{lem:conclusion} we deduce that 
$$ \E{\ind{\Ncal_v^*}\E{\ind{v  \in \Scal\Scal_1^{(i)}} -  \ind{v  \in \Scal\Scal_1^{(i+1)}}| n^*(v)}| n^*, \cv^*}= \Omega (1) \cdot p^{1/2} (c_i - c_{i+1}). $$
Thus,~\eqref{eq:cond_of_cond} yields 
$$ \E {\ind{v  \in \Scal\Scal_1^{(i)}} -  \ind{v  \in \Scal\Scal_1^{(i+1)}} | n^*, \cv^*} =  \Omega (1) \cdot p^{1/2} (c_i - c_{i+1})$$
and~\eqref{eq:X_0-exp} follows. 
\end{proof}

The next step towards the proof of~\eqref{eqn:X_0Bounds} is to show that 
\begin{equation}\label{eqn:X_i_conc} 
    \prb{|X_i - \E{X_i}| > \gamma n p^{1/2} \mid n^*, \cv^*} = o(1),
\end{equation}
Using this result with $\gamma/2$ instead of $\gamma$,~\eqref{eqn:X_0Bounds} follows from~\eqref{eq:X_0-exp}. 

\begin{proof}[Proof of~\eqref{eqn:X_i_conc}]
Chebyschev's inequality yields
\begin{equation*}
     \prb{|X_i - \E{X_i}| >  \gamma n p^{1/2}  \mid n^*, \cv^*} \leq \frac{1}{\gamma^2} \cdot \frac{\mathrm{Var} (X_i \mid n^*, \cv^*)}{n^2 p}. 
\end{equation*}
We will bound $\mathrm{Var}(X_i  \mid n^*, \cv^*)$ and, in particular, we will show that if 
$n^*$ satisfies $\Pcal_0$ and $\cv^*$ satisfies $\Ev_{\delta,n}$, then 
\begin{equation} \label{eq:var_bound}
    \mathrm{Var} (X_i \mid n^*, \cv^* ) = O\left( n^{3/2}\right). 
\end{equation}
(The proof of this is quite tedious and is postponed to the appendix.)
Now, this implies that on $\Pcal_0$ and $\Ev_{\delta, n}$
\begin{equation*}
  \prb{|X_i - \E{X_i}| >  \gamma n p^{1/2}  \mid n^*, \cv^*} = O\left( \frac{1}{n^{1/2}p}\right) = o(1). 
\end{equation*}
Hence, conditional on $n^*, \cv^*$ being such that $\Pcal_0$ and $\Ev_{\delta, n}$ occur, a.a.s.
\[
|X_i - \E{X_i}| < \gamma n p^{1/2}
\]
and thus~\eqref{eqn:X_i_conc} holds. 
\end{proof}

\section{After the first round} \label{sec:after_round_1}

In the previous subsection, we used a second moment argument to show that a.a.s. after the execution of the first round 
the system ends up with sets $S_1^{(1)},\ldots, S_1^{(k_0)}$ where $S_1^{(i)}$ consists of all vertices in $V_n$ that have 
adopted state $i$ in a strong sense during the execution of round 1, as well as with a subset $R_1 \subset V_n$ with $|R_1| = O (\log^2 n \sqrt{n/p})$ that consists of all those vertices that faced a tie during round 1 (cf. Claim~\ref{clm:residual_set}). 
Moreover, by~\eqref{eqn:X_0Bounds}, if $\cv^*$ is such that $\Ev_{\delta, n}$ is realised and 
 $c_1> \cdots > c_{k_0}$, then $|S_1^{(i)}| = b_i(n) $, where 
$b_i (n) - b_{i+1}(n)>\Omega (1)  (c_i-c_{i+1}) np^{1/2}$, for $i=1,\ldots, k_0-1$. Moreover, as 
$\Ev_{\delta, n}$ is realised, $c_i - c_{i+1} > \delta$.  
We prove the following. 
\begin{lemma} \label{lem:nbds_conc}
Let $\omega = \omega (n): \mathbb{N} \to \mathbb{R}$ be such that $\omega(n) \to \infty$ 
as $n\to \infty$ and $n^{1/3}p \geq \omega$. 
Suppose that  for $i=1,\ldots, k_0$, the functions $b_i : \mathbb{N} \to \mathbb{R}$ satisfy
$b_i (n) - b_{i+1}(n)>\Omega (1) np^{1/2}$, for $i=1,\ldots, k_0-1$.
For any $\delta >0$
a.a.s. the following holds: for any partition $(P_1,\ldots, P_{k_0}, R)$ of $V_n$ with 
$|P_i|= b_i (n)$ and $|R|=O (\log^2 n \sqrt{n/p})$, all, but fewer than 
$np/\omega^{1/2}$, vertices $v \in V_n$  have
$$d_{P_i}(v) - d_{P_{i+1}}(v)>  \frac{\delta}{4} n p^{3/2},$$
for all $i=1,\ldots, k_0-1$. 
\end{lemma}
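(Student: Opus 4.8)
The plan is to fix one "bad" vertex $v$ and one index $i$, bound the probability that $d_{P_i}(v) - d_{P_{i+1}}(v) \le \tfrac{\delta}{4} n p^{3/2}$ for a *fixed* candidate partition, and then pay a union bound over all partitions of the prescribed shape — this is where the hypothesis $n^{1/3}p \ge \omega$ is consumed. More precisely, the key point is that conditional on a fixed partition $(P_1,\dots,P_{k_0},R)$ (chosen without reference to the edges at $v$), the random variables $d_{P_i}(v)\sim\bin(|P_i|,p)$ and $d_{P_{i+1}}(v)\sim\bin(|P_{i+1}|,p)$ are independent, so their difference has mean $(b_i(n)-b_{i+1}(n))p = \Omega(1)\, np^{3/2}$ and standard deviation $O(\sqrt{np})$. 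Since $np^{3/2} \gg \sqrt{np}$ exactly when $np^2 \gg 1$, which holds because $np \ge \omega n^{2/3} \gg n^{1/3}$ so $np^2 \ge \omega^2 n^{1/3}\to\infty$, a Chernoff bound (cf.~\eqref{eq:Chernoff}) gives that $d_{P_i}(v)-d_{P_{i+1}}(v) > \tfrac{\delta}{4}np^{3/2}$ fails with probability at most $\exp(-c\, np^2)$ for some $c=c(\delta)>0$, for each fixed $v$, each fixed $i$, and each fixed partition.

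Next I would let $B$ denote the number of vertices $v$ that are "bad", i.e.\ for which there exists some $i\in\{1,\dots,k_0-1\}$ with $d_{P_i}(v)-d_{P_{i+1}}(v)\le \tfrac{\delta}{4}np^{3/2}$, and I would try to show $\prb{B \ge np/\omega^{1/2}} = o(1)$ simultaneously over all admissible partitions. The natural route is a first-moment / union bound: the number of partitions $(P_1,\dots,P_{k_0},R)$ of $V_n$ with the prescribed part sizes is at most $n^n = e^{n\ln n}$. For a fixed partition, $\E{B} \le k_0 n \exp(-c\,np^2)$, so by Markov $\prb{B \ge np/\omega^{1/2}} \le \frac{k_0 n \omega^{1/2}}{np}\exp(-c\,np^2) = \frac{k_0\omega^{1/2}}{p}\exp(-c\,np^2)$. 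Multiplying by the number of partitions, the total failure probability is at most $\exp\!\big(n\ln n - c\,np^2 + O(\log(1/p))\big)$; this tends to $0$ provided $np^2 \gg \ln n$. Since $np \ge \omega n^{2/3}$ we get $np^2 \ge \omega^2 n^{1/3}$, and $n^{1/3}$ dominates $\ln n$, so the bound holds. (Alternatively one can exploit that the edges at a single vertex involve only $n-1$ independent coins and the bad event for a fixed $v$ depends only on how the partition meets $N_G(v)$, trimming the partition count; but the crude $n^n$ bound already suffices.)

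A couple of bookkeeping points I would address. First, the partition must be chosen independently of the randomness used to test vertex $v$; since in Lemma~\ref{lem:nbds_conc} the partition is an abstract object quantified over (not the actual $S_1$-partition, which depends on the edges), there is no conditioning issue — we are proving a deterministic-looking statement about $G(n,p)$ that will later be applied to the realised partition. Second, the bound $d_{P_i}(v)-d_{P_{i+1}}(v)$ should be controlled uniformly in $i$ by a further factor of $k_0$ inside the union bound, which is harmless as $k_0$ is fixed. The main obstacle is making the exponents balance: one must verify that the Chernoff saving $\exp(-c\,np^2)$ genuinely beats the entropy cost $e^{n\ln n}$ of enumerating partitions together with the Markov loss $np/\omega^{1/2}$, and it is precisely here that the hypothesis $n^{1/3}p\ge\omega$ (equivalently $np^2 \gg n^{1/3} \gg \ln n$) is essential — this is the reason the paper needs $np \gg n^{2/3}$ rather than a weaker lower bound on $p$.
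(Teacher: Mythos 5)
There is a genuine gap in your proof, and it sits exactly at the step you flag as "the main obstacle." Applying Markov's inequality to $B$ for a fixed partition gives $\prb{B \ge np/\omega^{1/2}} \le \tfrac{k_0\omega^{1/2}}{p}\exp(-c\,np^2)$, so the total cost after summing over partitions is at best $\exp\bigl(O(n) - c\,np^2\bigr)$ (the correct entropy cost is $(k_0+1)^n=e^{O(n)}$, not $n^n$, but this doesn't help). You claim this tends to $0$ when $np^2 \gg \ln n$, but you dropped a factor of $n$: the entropy term is $\Theta(n)$, so you would need $np^2 \gg n$, i.e.\ $p^2\gg 1$, which is impossible. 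The per-vertex Chernoff saving $\exp(-\Omega(np^2))$ can never beat $e^{\Theta(n)}$ because $np^2 \le n$ always. So a first-moment/Markov argument on the count of bad vertices, combined with a union bound over partitions, is structurally incapable of working here.

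The paper's argument gets around this by trading Markov's inequality for genuine independence across vertices. Instead of bounding $\prb{B \ge np/\omega^{1/2}}$, it fixes a candidate bad set $S$ with $|S|=np/\sqrt{\omega}$ and bounds the probability that \emph{every} vertex of (a large subset $S'\subset S$ of) $S$ is bad. The crucial observation is that the random variables $\{d_{P_i\setminus S}(v)\}_{v\in S'}$ are mutually independent (they involve disjoint sets of edge indicators), so the probability that they all deviate is a \emph{product}: $\exp(-\Omega(|S'|\,np^2))$, not merely $\exp(-\Omega(np^2))$. Since $|S'|\,np^2 = \Theta(n^2p^3/\sqrt{\omega}) \ge n\omega^{5/2}$ (using $p\ge\omega n^{-1/3}$), this dominates the $e^{O(n)}$ entropy for choosing $S$, $S'$ and the partition. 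Internal edges of $S$ would spoil the independence and the estimate on $d_{P_i}(v)$ vs.\ $d_{P_i\setminus S}(v)$, which is why the paper first uses a Chernoff bound on $e(S)$ to extract a half-sized $S'\subset S$ whose vertices have $d_S(v)\le 3|S|p = o(np^{3/2})$. Your parenthetical remark about trimming the partition count is pointing in a useful direction, but the decisive ingredient is the product structure over $v\in S'$, not a smaller entropy term.
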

\begin{proof}
We will use the union bound to show that a.a.s. there is no subset $S \subset V_n$ with
$|S| = np /\sqrt{\omega}$ and a partition $(P_1,\ldots, P_{k_0},R)$ with the property that for any $v\in S$ 
$d_{P_i}(v) - d_{P_{i+1}}(v)\leq  \frac{\delta}{4} n p^{3/2}$, for some $1\leq i \leq k_0-1$. 
In particular, we fix such a subset $S$ and a partition $(P_1,\ldots, P_{k_0},R)$ and we show that a.a.s. 
at least half of the vertices of $S$ (a subset $S'\subset S$) 
have degree inside $P_i$ that is close to $|P_i\setminus S|p$ for all 
$1\leq i \leq k_0-1$. This yields a lower bound on $d_{P_i\setminus S}(v) - d_{P_{i+1}\setminus S}(v)$ for all vertices in $S'$. The choice of $S'$ is such that all vertices inside $S'$ have degree inside $S$ that is proportional to $|S|p$. This together with the lower bound on $|P_{i+1}|-|P_i|$ ensures that for all $1\leq i \leq k_0-1$ the lower bound on $d_{P_i}(v) - d_{P_{i+1}}(v)$ holds. We now proceed with the details of this argument.

Let $S \subset V_n$ be such that 
$|S| = np /\sqrt{\omega}$ and let $(P_1,\ldots, P_{k_0},R)$ be a partition of $V_n$ as specified above. 
Firstly, we apply the Chernoff bound to deduce the concentration of the number of edges within $S$; we denote it by $e(S)$. 
We have \[
\mathbb{P}\left( \left|e(S) - \binom{|S|}{2} p\right| \geq \frac{p|S|^2}{\sqrt{\omega}} \right)  \leq \exp\left(- \Omega \left(\frac{p |S|^2}{\omega}\right)\right).
\]
As $p \geq \omega n^{-1/3}$, we deduce that $p|S|^2/\omega = \Omega(n \omega)$. Hence with probability at least $1 - 2^{-\Omega (\omega \cdot n)}$, we have that $e(S)$ is suitably concentrated about its expected value. Thereby, on this event, the average degree of $G(n,p)[S]$ is 
$$\frac{2e(S)}{|S|} \leq p |S| (1+o(1)).$$
So on the above event and for $n$ sufficiently large, there exists a subset $S' \subset S$
with $|S'|= |S|/2$ such that every vertex $v \in S'$ has $d_{S} (v) \leq 3 |S|p$.

Let us set $\alpha (n) = np^{1/2}$ and consider $P_i$. We will first bound the probability that a given $v \in S'$ has  
$$\left| d_{P_i\setminus S}(v) - |P_i\setminus S|p \right| > \frac{\eps}{2} \alpha (n)p,$$
where $\eps>0$ is some constant. 
To this end we can apply the Chernoff bound since $d_{P_i\setminus S} (v)\sim \bin (|P_i\setminus S|, p)$. 
Inequality~\eqref{eq:Chernoff} yields: 
\begin{equation*}
    \prb{ \left| d_{P_i\setminus S}(v) - |P_i\setminus S|p \right| > \frac{\eps}{2} \alpha (n) p} \stackrel{|P_i\setminus S|\leq n}{=} 
    \exp \left(-\Omega \left(\frac{\alpha (n) ^2 p^2}{np}\right)\right).
\end{equation*}
Note that by the choice of $\alpha (n)$, the quantity in the exponent on the right-hand side is 
$$\frac{\alpha (n) ^2 p^2}{np} = \frac{n^2 p^{3}}{np} = n p^{2}.$$ 
Moreover, $np^{2} \to \infty$ as $n\to \infty$, since we have assumed that $p\gg n^{-1/3}$. 

Now, the union bound implies that for any $v\in S'$ and any fixed $\eps>0$ with probability 
$1- \exp (-\Omega (np^2))$, we have for $1\leq i < k_0$ 
$$ \left| d_{P_i\setminus S}(v) - |P_i\setminus S|p \right| \leq \frac{\eps}{2} \alpha (n) p. $$
On this event, 
\begin{eqnarray*}
d_{P_i\setminus S} (v) - d_{P_{i+1}\setminus S}(v) &\geq& \left( |P_i\setminus S|- |P_{i+1}\setminus S| \right)p - 
\eps \alpha (n)p \\
&\geq&\left( b_i(n) - b_{i+1} (n) - 2|S|\right)p -\eps \alpha (n)p.
\end{eqnarray*}

Now, we write \begin{eqnarray*} 
d_{P_i}(v) - d_{P_{i+1}} (v) &\geq&  d_{P_i\setminus S} (v) - d_{P_{i+1}\setminus S}(v) - d_{P_{i+1}\cap S}(v) \\
&\geq & d_{P_i\setminus S} (v) - d_{P_{i+1}\setminus S}(v) - d_{S}(v) \\
&\geq&  d_{P_i\setminus S} (v) - d_{P_{i+1}\setminus S}(v) -3|S|p \\
&\geq& \left( b_i(n) - b_{i+1}(n) - 5|S|\right)p -\eps \alpha (n)p.
\end{eqnarray*}
But for some $\delta>0$, 
we have $b_i (n) - b_{i+1}(n) \geq \delta n p^{1/2}$, for any $n$ sufficiently large, whereas $|S| =o(np)$. 
Therefore, the right-hand side of the above is for $n$ sufficiently large 
$$  \left( b_i(n) - b_{i+1} (n) - 5|S|\right) p -\eps \alpha (n)p >\frac{\delta}{2} np^{3/2} - \eps np^{3/2} > \frac{\delta}{4} np^{3/2},$$
if $\eps < \delta/4$. Thus, we have shown that if $\eps < \delta /4$ and $n$ is sufficiently large, then 
$$ d_{P_i} (v) - d_{P_{i+1}}(v) > \frac{\delta}{4} np^{3/2}.$$

Note that the family $\{d_{P_i\setminus S}(v) \}_{v \in S', i=1,\dots,k_0}$ consists of i.i.d. random variables which are binomially distributed with parameters $|P_i\setminus S|, p$. 
The Chernoff bound~\eqref{eq:Chernoff} together with the union bound implies that for any $v \in S'$
\begin{equation*}
    \prb{\mbox{for some $1\leq i \leq k_0$}\ |d_{P_i\setminus S}(v) - |P_i\setminus S|p| > \frac{\eps}{2} \alpha (n)p} = \exp \left(-\Omega \left(\frac{\alpha (n)^2 p^2}{np}\right)\right).
\end{equation*}
Since the family $\{d_{P_i\setminus S}(v) \}_{v \in S', i=1,\dots,k_0}$ is independent we then deduce that 
\begin{equation*}
    \prb{\mbox{$\forall v\in S' $  $\exists 1\leq i \leq k_0$:}\ |d_{P_i\setminus S}(v) - |P_i\setminus S|p| > \frac{\eps}{2} \alpha (n)p} = \exp \left(-\Omega \left(\frac{|S'|\alpha (n)^2 p^2}{np}\right)\right).
\end{equation*}
But since $|S'| = |S|/2$ and $|S|=np/\sqrt{\omega}$ we have
\begin{equation*}
    \frac{|S'|\alpha (n)^2 p^2}{np}=\Omega \left(\frac{ \alpha^2 (n) p^2}{\sqrt{\omega}} \right). 
\end{equation*}
As $\alpha (n) = np^{1/2}$, the numerator on the right hand side is
$$\alpha(n)^2  p^2 = \Omega (n^2 p^3).$$
Hence as $p \geq \omega n^{-1/3}$ we have that $\alpha^2 (n) p^2/\sqrt{\omega} \geq n\omega^{5/2}$, 
and, therefore, this probability is $e^{-\Omega (\omega^2 n)}$. 
As there are $2^{O(n)}$ choices for $S$and $S' \subset S$ and the partition $(P_1, \ldots, P_{k_0}, R)$, the union bound implies the lemma.
\end{proof} 
\begin{remark}
Note that it is in the proof of Lemma~\ref{lem:nbds_conc} that the assumption $p\gg n^{-1/3}$ is needed. This ensures that the bad events occur with probability at most $2^{-\omega \cdot n}$ and, therefore makes the union bound work. 
\end{remark}

We now apply Lemma~\ref{lem:nbds_conc} to deduce the evolution of the behaviour of all but at most $np/\omega^{1/2}$ vertices from round 1 into round 2. In particular, as $np^{1/2} \gg \log n\sqrt{n/p}$, the above 
lemma implies that all but at most $np/\omega^{1/2}$ will adopt state 1 after round 2. 
Finally, a union bound over all partitions of $V_n$ into two parts where one of the parts has size at most $n/\omega^{1/2}$ and the Chernoff bound imply that conditional on the above evolution of the process 
after the third round a.a.s. all vertices will have adopted state 1. This concludes the proof of Theorem~\ref{thm:main}. 

\section{Conclusions and some further directions}
In this paper we generalised the results of~\cite{ar:BerkDev_SPA,ar:CKLT2021,ar:FMK_RSA_2021} showing that 
majority dynamics on the random graph $G(n,p)$ and $k>2$ states will reach unanimity after at most 3 rounds, 
provided that $np\gg n^{2/3}$. As in the case of 2 states, the proof relies on a tight analysis of the configuration after the first round. 

We do not believe that the lower bound imposed on $p$ is tight. It is needed in the proof of Lemma~\ref{lem:nbds_conc} for the union bound argument to work. It could be possible to extend Theorem~\ref{thm:main} to $p\gg n^{-1/2}$ or to even lower densities as in~\cite{ar:CKLT2021}.

The ``power-of-the-few'' question becomes more intriguing in the context of more than 2 states. 
If the initial configuration is random, then one of the states dominates over each one of the other states 
by $\sqrt{n}$. What would happen if we started the process from a configuration where two or more of the 
states had exactly the same size? Could one extend the results of Tran and Vu~\cite{ar:TranVu2020} and those 
of Sah and Sawhney~\cite{ar:SahSawney2020} in this context?

\appendix 

\section{The second moment calculation} \label{app:second_moment}
\begin{proof}[Proof of~\eqref{eq:var_bound}]
To keep the presentation simple, we will drop the conditioning from our notation. 
As it will be obvious from in our subsequent calculations, we work with a given value of $n^*$ 
and a given realisation of $S_0$ such that $\Ev_0\cap \Ev_{\delta, n}$ is realised. 
Recall that for $i \in \Mcal_0$
we let $\Scal\Scal_1^{(i)}$ denote the subset of vertices $v \in V_n$ such that $S_1(v)=i$ in a \emph{strong sense}, that is, 
$n_i(v) > n_j(v)$ for any $j\in \Mcal_0$ with $j \not = i$. 
By the linearity of the expected value: 
\begin{equation*}
    \E{X_i} = \sum_{v\in V_n} \prb{v \in \Scal \Scal_1^{(i)}},
\end{equation*}
and 
\begin{equation*}
    \mathrm{Var} (X_i) \leq \sum_{v \neq v'} \left(\prb{v,v' \in \Scal \Scal_1^{(i)}} - \prb{v \in \Scal \Scal_1^{(i)}}\prb{v' \in \Scal \Scal_1^{(i)}}\right) + \E{X_i}.
\end{equation*}
We will show that uniformly over all distinct $v, v' \in V_n$ we have 
\begin{equation} \label{eq:local_variance}
   \prb{v,v' \in \Scal \Scal_1^{(i)}} - \prb{v \in \Scal \Scal_1^{(i)}}\prb{v' \in \Scal \Scal_1^{(i)}}= O\left(\frac{1}{n^{1/2}}\right).
\end{equation}
Using the trivial bound that 
$\E{X_i} \leq n$ we obtain:
\begin{equation*}
\mathrm{Var} (X_i) =    O\left(\frac{n^2}{n^{1/2}} + n\right)   = O\left( n^{3/2}\right).
\end{equation*}
\end{proof}

\begin{proof}[Proof of~\eqref{eq:local_variance}]
For any $v\in V_n$ let $J_v$ denote the indicator
random variable that is equal to 1 precisely when $v \in \Scal \Scal_1^{(i)}$.
We will bound 
\begin{equation*}
   d(v,v'):= \E{J_v \cdot J_{v'}} - \E{J_v} \cdot \E{J_{v'}}.
\end{equation*}
Recall that $S^* = \cup_{j \in \Mcal_0} S_0^{-1} (j)$.
Firstly, let us observe that if $v,v' \not \in S^*$, then the random variables $J_v$ and $J_{v'}$ are independent. In other words,
if $v,v'\not \in S^*$, then $d(v,v')=0$. 
Thus, it suffices to bound the sum 
\begin{equation} 
\sum_{(v,v'): v\not = v', \ v' \in S^*} d(v,v'). 
\end{equation}
(This sum is taken over ordered pairs.)
Let us fix such a pair $(v,v')$ with  $v\not = v'$ and $v \in V_n$ but $v' \in S^*$. 
Recall that we set $n_{v,v'}^* = n^* - 1_{v\in S^*} - 1_{v' \in S^*}= n^*- 1_{v\in S^*}-1$. 
Furthermore, in Section~\ref{sec:LLT} we set $n^*_v = n^* -1_{v \in S^*}$; thus, $n^*_{v,v'} = n^*_v - 1$. 
 
We defined $n^*_{v,v'}(v) = |N_{S^*\setminus \{v,v'\}}(v)|$; hence $n^*_{v,v'}(v) \sim 
\bin (n_{v,v'}^*,p)$. We apply the law of total probability
conditioning on whether or not $v\sim v'$ as well as on the values of $n^*_{v,v'}(v)$ and $n^*_{v,v'}(v')$: 
\begin{eqnarray}
   && \E{J_v \cdot J_{v'}} = \nonumber \\
   &&\hspace{1cm} p\cdot \E{\E{J_v \cdot J_{v'} \mid n^*_{v,v'}(v),n^*_{v,v'}(v')}\mid v\sim v'} \nonumber \\ 
&& \hspace{1.5cm} + (1-p)\cdot \E{\E{J_v \cdot J_{v'} \mid n^*_{v,v'}(v), n^*_{v,v'}(v')}\mid v\not \sim v'}. \nonumber \\
   && \label{eq:conditioning}
\end{eqnarray}
We will start with the first summand on the right-hand side. 
Recall that we are conditioning 
on the initial configuration 
$\Scal_{\Mcal_0}=\left\{S^{-1}_0(1),\ldots ,S^{-1}_0(k_0)\right\}$ 
being such that 
$n_j:=|S^{-1}_0(j)| = n^*/k_0 + c_j \sqrt{n^*}$, for $j\in \Mcal_0$, where $\delta < |c_j|<1/\delta$.

Let us abbreviate $\Es{\cdot}= \E{\cdot \mid v \sim v'}$.
Using these abbreviations, we can write explicitly 
\begin{eqnarray}
\lefteqn{\Es{\E{J_v \cdot J_{v'} \mid n^*_{v,v'}(v), n^*_{v,v'}(v')}} =} \nonumber \\
&&\sum_{\hat{k}=0}^{n^*_{v,v'}} \sum_{\hat{k'}=0}^{n^*_{v,v'}} 
\Es{J_v \cdot J_{v'} \mid n^*_{v,v'}(v) =\hat{k}, n^*_{v,v'}(v') =\hat{k}'} \cdot 
\prb{ n^*_{v,v'}(v) =\hat{k}, n^*_{v,v'}(v') =\hat{k}'}. \nonumber \\
&& 
\end{eqnarray}
Note that conditional on $v \sim v'$ 
and on $n^*_{v,v'}(v) =\hat{k}, n^*_{v,v'}(v') =\hat{k}'$, the random variables $J_v$ 
and $J_{v'}$ are independent. 
Furthermore, $J_v$ depends only on the realisation of $N_G(v)$ inside $S^*$.

So each summand in the above double sum can be written as: 
\begin{eqnarray*}
\lefteqn{\Es{J_v \cdot J_{v'} \mid n^*_{v,v'}(v) =\hat{k}, n^*_{v,v'}(v') =\hat{k}'} \cdot 
\prb{n^*_{v,v'}(v) =\hat{k}, n^*_{v,v'}(v') =\hat{k}'}=} \\
&& 
\Es{J_v  \mid n^*_{v,v'}(v) =\hat{k}, n^*_{v,v'}(v') =\hat{k}'} \cdot
\Es{J_{v'} \mid n^*_{v,v'}(v) =\hat{k}, n^*_{v,v'}(v') =\hat{k}'} 
\times \\
& &\hspace{1cm} \prb{ n^*_{v,v'}(v) =\hat{k}}\prb{ n^*_{v,v'}(v') =\hat{k}'} \\
&&=
\Es{J_v  \mid n^*_{v,v'}(v) =\hat{k}} \cdot
\Es{J_{v'} \mid n^*_{v,v'}(v') =\hat{k}'} 
\cdot 
\prb{ n^*_{v,v'}(v) =\hat{k}}\prb{ n^*_{v,v'}(v') =\hat{k}'}.
\end{eqnarray*}
Therefore, the double sum itself can be factorised as follows:
\begin{eqnarray}
\lefteqn{\Es{\E{J_v \cdot J_{v'} \mid n^*_{v,v'}(v), n^*_{v,v'}(v')}} =} \nonumber \\
&=&\left(\sum_{\hat{k}=0}^{n^*_{v,v'}} \Es{J_v  \mid n^*_{v,v'}(v) =\hat{k}} \cdot
\prb{\hat{n}^*_{v,v'}(v) =\hat{k}} \right)\times \nonumber \\
&& \hspace{2cm} \left(\sum_{\hat{k}'=0}^{n^*_{v,v'}} \Es{J_{v'}  \mid n^*_{v,v'}(v') =\hat{k}'} \cdot
\prb{ n^*_{v,v'}(v') =\hat{k}'} \right). \nonumber \\
&& \label{eq:factorised}
\end{eqnarray}
Let us fix $\hat{k} \in \{0,\ldots, n^*_{v,v'}\}$ and define 
$$\hat{\Kcal}^{(i)}_{\hat{k}} = \{(\hat{k}_1,\ldots ,\hat{k}_{k_0})\in \mathbb{N}_0^{k_0} \ : \ 
\sum_{j=1}^{k_0} \hat{k}_j = \hat{k}, \ \hat{k}_i > \hat{k}_j, \mbox{for all $j \not = i$}\}.$$
Using this, with $n_{j,v'}(v) = |N_{S_0^{-1}(j) \setminus \{v'\}} (v)|$ for $j \in \Mcal_0$, we express: 
\begin{eqnarray}
&&\Es{J_v  \mid n^*_{v,v'}(v) =\hat{k}} = 
\sum_{(\hat{k}_1,\ldots,\hat{k}_{k_0}) \in \hat{\Kcal}_{\hat{k}}^{(i)}} \prb{n_{j,v'}(v) = \hat{k}_j, \ j\in \Mcal_0 \mid n^*_{v,v'}(v) = \hat{k}}. \nonumber \\
&& \label{eq:prob_sum}
\end{eqnarray}
Now, set $k_j =\hat{k}_j+ 1_{S_0(v')=j}$, for $j\in \Mcal_0$. 
 We compare 
$\prb{n_{j,v'}(v) = \hat{k}_j, \ j\in \Mcal_0 \mid n^*_{v,v'}(v)=\hat{k}}$ with 
$\prb{n_j(v) = k_j, \ j\in \Mcal_0 \mid n^*(v)=\hat{k}+1}$. 
Let $\Kcal_{\hat{k}}^{(i)}= \{ (k_1,\ldots,k_{k_0}) \in \mathbb{N}_0^{k_0} \ : \ \sum_{j=1}^{k_0} k_j = \hat{k}+1, \ k_i > k_j \ \mbox{for all $j\not =i$} \}$.
\begin{claim} \label{clm:prob_ratio}
For any $\hat{k} \in \{0,\ldots, n^*_{v,v'} \}$ and all $(k_1,\ldots,k_{k_0}) \in \Kcal_{\hat{k}}^{(i)}$ 
the following holds: 
\begin{eqnarray*}
\lefteqn{\frac{\prb{n_{j,v'}(v) = \hat{k}_j, \ j\in \Mcal_0\mid n^*_{v,v'}(v)=\hat{k}}}{\prb{n_j(v) = k_j, \ j\in \Mcal_0 \mid 
n^*(v) =\hat{k}+1}}=}\\
&&
\frac{\prod_{j\in \Mcal_0} \binom{n_j-1_{S_0(v)=j}-1_{S_0(v')=j}}{k_j-1_{S_0(v')=j}}} {\binom{n^*_{v,v'}}{\hat{k}}} \cdot \left(\frac{\prod_{i \in\Mcal_0}\binom{n_j-1_{S_0(v)=j}}{k_j}}{\binom{n^*_v}{\hat{k}+1}} \right)^{-1} \\
&=& \frac{n^*-1_{S_0(v)\in \Mcal_0}}{\hat{k}+1} \cdot \frac{k_{s(v')}}{n_{S_0(v')}-1_{S_0(v')=S_0(v)}}.
\end{eqnarray*}
\end{claim}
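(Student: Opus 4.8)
The plan is to recognise both conditional probabilities as multivariate hypergeometric probabilities and then divide, after which the stated identity is pure bookkeeping. First I would stress that throughout the appendix we work with a \emph{fixed} realisation of $S_0$, so the sets $S_0^{-1}(j)$ (and hence $S^*$) are deterministic and the only randomness is in the independent edge-indicators of $G(n,p)$. Consequently, conditioning additionally on $n^*_{v,v'}(v)=\hat k$, the set $N_{S^*\setminus\{v,v'\}}(v)$ is uniform among the $\binom{n^*_{v,v'}}{\hat k}$ subsets of $S^*\setminus\{v,v'\}$ of size $\hat k$, so $(n_{j,v'}(v))_{j\in\Mcal_0}$ is multivariate hypergeometric on the ground set $S^*\setminus\{v,v'\}$ with parts $S_0^{-1}(j)\setminus\{v,v'\}$ of sizes $n_j-\ind{S_0(v)=j}-\ind{S_0(v')=j}$. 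Writing $\hat k_j=k_j-\ind{S_0(v')=j}$, this gives
\[
\prb{n_{j,v'}(v)=\hat k_j,\ j\in\Mcal_0 \mid n^*_{v,v'}(v)=\hat k}
=\frac{\prod_{j\in\Mcal_0}\binom{n_j-\ind{S_0(v)=j}-\ind{S_0(v')=j}}{k_j-\ind{S_0(v')=j}}}{\binom{n^*_{v,v'}}{\hat k}}.
\]
The identical argument applied to $N_{S^*}(v)$ conditional on $n^*(v)=\hat k+1$, now on the ground set $S^*\setminus\{v\}$ of size $n^*_v$ with parts of size $n_j-\ind{S_0(v)=j}$, produces the denominator $\prod_{j\in\Mcal_0}\binom{n_j-\ind{S_0(v)=j}}{k_j}/\binom{n^*_v}{\hat k+1}$, and dividing the two expressions gives the first displayed equality of the claim.

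Next I would simplify the quotient. Put $s:=S_0(v')$, which lies in $\Mcal_0$ since $v'\in S^*$. For every $j\neq s$ we have $\ind{S_0(v')=j}=0$, so the numerator binomial $\binom{n_j-\ind{S_0(v)=j}}{k_j}$ equals its counterpart in the denominator and cancels; only the $j=s$ factor survives, and with $m:=n_s-\ind{S_0(v)=s}$ the elementary identity $\binom{m-1}{k_s-1}=\frac{k_s}{m}\binom{m}{k_s}$ gives the contribution $k_s/m=k_{S_0(v')}/(n_{S_0(v')}-\ind{S_0(v)=S_0(v')})$. For the two denominators, since $v'\in S^*$ one has $n^*_v=n^*_{v,v'}+1$ regardless of whether $v\in S^*$, so by the same identity $\binom{n^*_v}{\hat k+1}/\binom{n^*_{v,v'}}{\hat k}=n^*_v/(\hat k+1)$, and $n^*_v=n^*-\ind{v\in S^*}=n^*-\ind{S_0(v)\in\Mcal_0}$. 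Multiplying the two surviving pieces gives precisely the right-hand side of the claim.

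Since every step is an exact combinatorial identity there is no analytic difficulty; the only real care needed will be the indicator bookkeeping — keeping straight whether $v$ and $v'$ lie in $S^*$, and for $v$ in which part, so that the ground-set sizes, the part sizes, and the relations $n^*_v=n^*_{v,v'}+1=n^*-\ind{S_0(v)\in\Mcal_0}$ carry the correct $\ind{\cdot}$ terms. I would also flag the degenerate case $k_{S_0(v')}=0$, which can occur when $i\neq S_0(v')$: then $\hat k_{S_0(v')}=-1$, the left-hand probability is that of an impossible event, and both sides vanish, consistently with the convention $\binom{m}{-1}=0$; moreover $\hat k\le n^*_{v,v'}$ ensures $\binom{n^*_v}{\hat k+1}\neq0$, and whenever the numerator probability is positive the tuple $(k_j)_{j\in\Mcal_0}$ is feasible for the denominator event, so the quotient is well defined.
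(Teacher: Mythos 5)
Your proof is correct and follows essentially the same route as the paper: recognise both conditional probabilities as multivariate hypergeometric, take the ratio of products of binomial coefficients, and simplify using $\binom{m-1}{k-1}/\binom{m}{k}=k/m$ (for the $j=S_0(v')$ factor) and $\binom{n^*_v}{\hat k+1}/\binom{n^*_{v,v'}}{\hat k}=n^*_v/(\hat k+1)$ together with $n^*_v=n^*-\ind{S_0(v)\in\Mcal_0}$. The only difference is that you spell out the hypergeometric setup and the degenerate case $k_{S_0(v')}=0$, which the paper leaves implicit.
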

\begin{proof}
Note that if $S_0(v')=j$
\begin{equation*}
    \binom{n_j-1_{S_0(v)=j}-1_{S_0(v')=j}}{k_j-1_{S_0(v')=j}} \cdot \binom{n_j-1_{S_0(v)=j}}{k_j}^{-1} = \frac{k_j }{n_j-1_{S_0(v)=j}}
\end{equation*}
but otherwise 
\begin{equation*}
    \binom{n_j-1_{S_0(v)=j}-1_{S_0(v')=j}}{k_j-1_{S_0(v')=j}} \cdot \binom{n_j-1_{S_0(v)=j}}{k_j}^{-1}=1.
    \end{equation*}
Similarly, since $v' \in S^*$
\begin{equation*}
    \binom{n^*_v}{\hat{k}+1} \cdot \binom{n_{v,v'}^*}{\hat{k}}^{-1} = \frac{n^*_v}{\hat{k}+1}.
\end{equation*}
So the claim follows as $n_v^*= n^* - 1_{S_0(v) \in \Mcal_0}$.
\end{proof}
The above claim implies that 
\begin{eqnarray*} 
\lefteqn{\prb{n_{j,v'}(v) = \hat{k}_j, \ j\in \Mcal_0 \mid n^*_{v,v'}(v) = \hat{k}} = }\\
&& \hspace{0.1cm}\frac{n^*_v}{\hat{k}+1} \cdot \frac{k_{S_0(v')}}{n_{S_0(v')}-1_{S_0(v')=S_0(v)}} \cdot\prb{n_j(v) = k_j, \ j\in \Mcal_0  \mid \hat{n}^*(v) = \hat{k}+1}.
\end{eqnarray*}
Now, let us relate $\prb{n^*_{v,v'}(v) =\hat{k}}$ 
with $\prb{n^*(v) =\hat{k}+1}$:
\begin{eqnarray*}
\prb{n^*_{v,v'}(v) =\hat{k}} &=& \binom{n^*_{v,v'}}{\hat{k}} p^{\hat{k}} (1-p)^{n^*_{v,v'}-\hat{k}} = \binom{n^*_{v}-1}{\hat{k}} p^{\hat{k}} (1-p)^{n^*_{v}-1-\hat{k}}\\
&=& \frac{1}{p}\cdot \frac{\hat{k} +1}{n^*_v}\cdot \binom{n^*_v}{\hat{k}+1} p^{\hat{k}+1} (1-p)^{n^*_v-(\hat{k}+1)} \\
&=& \frac{1}{p}\cdot \frac{\hat{k}+1}{n^*_v}\cdot
\prb{n^* (v) =\hat{k}+1}.
\end{eqnarray*}
With these we can write
\begin{eqnarray*}
\lefteqn{\sum_{\hat{k}=0}^{n^*_{v,v'}} \Es{J_v  \mid n^*_{v,v'}(v) =\hat{k}} \cdot
\prb{n^*_{v,v'}(v) =\hat{k}} =} \\
&& \sum_{\hat{k}=0}^{n^*_{v,v'}} \prb{n^*(v) =\hat{k}+1} \times \\
&& \sum_{(k_1,\ldots,k_{k_0}) \in \Kcal^{(i)}_{\hat{k}}} \frac{k_{S_0(v')}}{(n_{S_0(v')} -1_{S_0(v')=S_0(v)})p}\cdot \prb{n_j(v) = k_j, \ j\in \Mcal_0 \mid n^* (v) = \hat{k}+1}.
\end{eqnarray*}
We express 
$k_{S_0(v')} = (n_{S_0(v')} -1_{S_0(v')=S_0(v)})p + \delta_{s(v')}$. 
Using this, we get:
\begin{eqnarray*}
\lefteqn{\sum_{\hat{k}=0}^{n^*_{v,v'}} \Es{J_v  \mid n^*_{v,v'}(v) =\hat{k}} \cdot
\prb{n^*_{v,v'}(v) =\hat{k}} =} \\
&& \sum_{\hat{k}=0}^{n^*_{v,v'}}\sum_{(k_1,\ldots,k_{k_0}) \in \Kcal^{(i)}_{\hat{k}}}
 \prb{n^* (v) =\hat{k}+1}\prb{n_j(v) = k_j, \ j\in\Mcal_0 \mid \hat{n}^*(v) = \hat{k}+1}  \\
&&+ \sum_{\hat{k}=0}^{n^*_{v,v'}} \prb{\hat{n}^*(v) =\hat{k}+1} \times \\
&&\sum_{(k_1,\ldots,k_{k_0}) \in \Kcal^{(i)}_{\hat{k}}} \frac{\delta_{s(v')}}{(n_{S_0(v')} -1_{S_0(v')=S_0(v)})p} 
\prb{n_j(v) = k_j, \ j\in \Mcal_0 \mid n^*(v) = \hat{k}+1}.
\end{eqnarray*}
The first term is precisely equal to $\E{J_v},$ while, for the second term, we combine: 
\begin{eqnarray*}
\lefteqn{\prb{n^*(v) =\hat{k}+1} \cdot \prb{n_j(v) = k_j, \ j\in \Mcal_0 \mid n^*(v) = \hat{k}+1} =}\\ && \prb{n_j(v) = k_j, \ j\in \Mcal_0}.\hspace{6cm}
\end{eqnarray*}
Note that the random variables $n_j(v)$
for $j\in \Mcal_0$ form an independent family and are all binomially distributed. 
Let $\Scal_{\hat{k}+1} = \{(k_1,\ldots,k_{k_0}) : \sum_{j=1}^{k_0} k_j = \hat{k}+1, 0\leq k_j\leq n^*_v \}$. We bound
\begin{align*}
&\left|\sum_{\hat{k}=0}^{n^*_{v,v'}} \sum_{(k_1,\ldots,k_{k_0}) \in \Kcal^{(i)}_{\hat{k}}} \frac{\delta_{S_0(v')}}{(n_{S_0(v')} -1_{S_0(v')=S_0(v)})p} 
\prb{n_j(v) = k_j, \ j\in \Mcal_0} \right|\leq \\
& \sum_{\hat{k}=0}^{n^*_{v,v'}} \sum_{(k_1,\ldots,k_{k_0}) \in \Kcal^{(i)}_{\hat{k}}} \frac{|\delta_{S_0(v')}|}{(n_{S_0(v')} -1_{S_0(v')=S_0(v)})p} 
\prb{n_j(v) = k_j, \ j\in \Mcal_0} \\
&\leq \sum_{\hat{k}=0}^{n^*_{v,v'}} \sum_{(k_1,\ldots,k_{k_0}) \in \Scal_{\hat{k}+1}}
\frac{|\delta_{S_0(v')}|}{(n_{S_0(v')} -1_{S_0(v')=s(v)})p} 
\prb{n_j(v) = k_j, \ j\in \Mcal_0} \\
&\leq  \sum_{\hat{k}=0}^{n^*_v} \sum_{(k_1,\ldots,k_{k_0}) \in \Scal_{\hat{k}+1}}
\frac{|\delta_{S_0(v')}|}{(n_{S_0(v')} -1_{S_0(v')=S_0(v)})p} 
\prb{n_j(v) = k_j, \ j\in \Mcal_0} \\
&\leq \sum_{k_1=0}^{n_1} \cdots \sum_{k_{k_0}=0}^{n_{k_0}}
\frac{|\delta_{S_0(v')}|}{(n_{S_0(v')} -1_{S_0(v')=S_0(v)})p} 
\prod_{j=1}^{k_0}\prb{n_j(v) = k_j} \\
&= \sum_{k_{S_0(v')}=0}^{n_{S_0(v')}}
\frac{|\delta_{S_0(v')}|}{(n_{S_0(v')} -1_{S_0(v')=S_0(v)})p} 
\prb{n_{S_0(v')}(v) = k_{S_0(v')}} \stackrel{n_{S_0(v')} = n^* p_{S_0(v')}}{=} O\left(\frac{1}{\sqrt{np}}\right).
\end{align*}
Therefore, 
\begin{eqnarray*}
\sum_{\hat{k}=0}^{n^*_{v,v'}} \E{J_v  \mid n^*_{v,v'}(v) =\hat{k}} \cdot
\prb{n^*_{v,v'}(v) =\hat{k}} \leq
 \Es{J_v} + O\left(\frac{1}{\sqrt{np}}\right).
\end{eqnarray*}
Using this upper bound in~\eqref{eq:factorised}, we get 
\begin{equation} \label{eq:factorized_1}
    \Es{\E{J_v \cdot J_{v'} \mid n^*_{v,v'}(v), n^*_{v,v'}(v')}}  \leq 
    \E{J_v} \cdot \E{J_{v'}} +  O\left(\frac{1}{\sqrt{np}}\right).
\end{equation}
Now, let us abbreviate $\Ens{\cdot}= \E{\cdot \mid v \not \sim v'}$.
Using the law of total probability and conditioning on the values 
of $n^*_{v,v'}(v)$ and $n^*_{v,v'}(v')$, we get
\begin{align}
&\Ens{\E{J_v \cdot J_{v'} \mid n^*_{v,v'}(v), n^*_{v,v'}(v')}} =\nonumber \\
& \sum_{\hat{k}=0}^{n^*_{v,v'}} \sum_{\hat{k'}=0}^{n^*_{v,v'}} 
\Ens{J_v \cdot J_{v'} \mid n^*_{v,v'}(v) =\hat{k}, n^*_{v,v'}(v') =\hat{k}'} \cdot 
\prb{n^*_{v,v'}(v) =\hat{k}, n^*_{v,v'}(v') =\hat{k}'}. \nonumber
\end{align}
We will bound  
$\Ens{J_v \cdot J_{v'} \mid n^*_{v,v'}(v)=\hat{k}, n^*_{v,v'}(v')=\hat{k}'}$. 
Note that 
\begin{equation*}
\begin{split}
   &\Ens{J_v \cdot J_{v'} \mid n^*_{v,v'}(v)=\hat{k}, n^*_{v,v'}(v')=\hat{k}'} =\\
   &\hspace{2cm} \Ens{J_v  \mid n^*_{v,v'}(v)=\hat{k}}
   \cdot 
   \Ens{J_{v'} \mid n^*_{v,v'}(v')=\hat{k}'}.
   \end{split}
\end{equation*}
We will deal with $ \Ens{J_v  \mid n^*_{v,v'}(v)=\hat{k}}$, as the analogous calculation can be carried out for $v'$. 
With $\Kcal^{(i)}_{\hat{k}}= \{(k_1,\ldots,k_{k_0}) \ : \ 
\sum_{j=1}^{k_0} k_j = \hat{k}, \ k_i > k_j \ \mbox{for all $j\not =i$}
\} $ we write
\begin{equation}
  \Ens{J_v  \mid n^*_{v,v'}(v)=\hat{k}} = 
  \sum_{(k_1,\ldots,k_{k_0}) \in\Kcal^{(i)}_{\hat{k}}} 
  \prb{n_{j,v'}(v) = k_j, \ j\in \Mcal_0 \mid n^*_{v,v'}(v) = \hat{k}}.
\end{equation}
We show the analogue of Claim~\ref{clm:prob_ratio}.
Recall that we are assuming that $s(v') \in \Mcal_0$.
\begin{claim}
For all $(k_1,\ldots,k_0) \in \Kcal^{(i)}_{\hat{k}}$ the following holds:
\begin{eqnarray*}
\lefteqn{\frac{\prb{n_{j,v'}(v) = k_j, \ j\in \Mcal_0 \mid n^*_{v,v'}(v) = \hat{k}}}{\prb{n_j(v) = k_j, \ j\in \Mcal_0 \mid n^*(v) = \hat{k}}}=}\\
\hspace{1cm}&&\frac{\prod_{j=1}^{k_0}\binom{n_j-1_{S_0(v)=j}-1_{S_0(v')=j}}{k_j}}{\binom{n^*_{v,v'}}{\hat{k}}} \cdot \left(\frac{\prod_{j=1}^{k_0}\binom{n_j-1_{S_0(v)=j}}{k_j}}{\binom{n^*_{v}}{\hat{k}}} \right)^{-1} \\
\hspace{1cm} &=&\frac{n^*_v}{n^*_v-\hat{k}} \cdot \frac{n_{S_0(v')} - k_{S_0(v')}-1_{S_0(v')=S_0(v)}}{n_{S_0(v')}-1_{S_0(v')=S_0(v)}}.
\end{eqnarray*}

\end{claim}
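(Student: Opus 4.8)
The plan is to recognise both conditional probabilities as multivariate hypergeometric distributions and then to simplify the resulting quotient of binomial coefficients by elementary manipulation.

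First I would note that, conditional on the value of $n^*_{v,v'}(v)$, the set $N_{S^*\setminus\{v,v'\}}(v)$ is distributed uniformly among all subsets of $S^*\setminus\{v,v'\}$ of that cardinality: the edges joining $v$ to $S^*\setminus\{v,v'\}$ are i.i.d.\ $\bin(1,p)$, and in the regime $v\not\sim v'$ they are unaffected by conditioning on the event $\{v\not\sim v'\}$ (and $v\notin N(v)$, so $n_{j,v'}(v)=|N_{S_0^{-1}(j)\setminus\{v,v'\}}(v)|$). Since the sets $\{S_0^{-1}(j)\setminus\{v,v'\}\}_{j\in\Mcal_0}$ partition $S^*\setminus\{v,v'\}$ into parts of sizes $n_j-1_{S_0(v)=j}-1_{S_0(v')=j}$, the vector $(n_{j,v'}(v))_{j\in\Mcal_0}$ restricted to $\sum_{j\in\Mcal_0}n_{j,v'}(v)=\hat{k}$ follows the multivariate hypergeometric law, which yields the stated formula for the numerator. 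Repeating this with $S^*\setminus\{v\}$, a set of size $n^*_v$, in place of $S^*\setminus\{v,v'\}$ gives the analogous formula for the denominator, and dividing the two establishes the first equality.

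For the second equality I would simplify the ratio of products of binomial coefficients. Since $S_0(v')=s(v')\in\Mcal_0$, the index $j=s(v')$ is the only one in the product over $\Mcal_0$ for which $1_{S_0(v')=j}=1$; for every other $j\in\Mcal_0$ the factor $\binom{n_j-1_{S_0(v)=j}}{k_j}$ is common to numerator and denominator and cancels. For $j=s(v')$ one applies the identity $\binom{m-1}{\ell}/\binom{m}{\ell}=(m-\ell)/m$ with $m=n_{s(v')}-1_{S_0(v)=s(v')}$ and $\ell=k_{s(v')}$, and uses $1_{S_0(v)=s(v')}=1_{S_0(v')=S_0(v)}$, to produce the factor $(n_{S_0(v')}-k_{S_0(v')}-1_{S_0(v')=S_0(v)})/(n_{S_0(v')}-1_{S_0(v')=S_0(v)})$. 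For the binomial coefficients in the denominators, $v'\in S^*$ forces $n^*_{v,v'}=n^*_v-1$, hence $\binom{n^*_v}{\hat{k}}/\binom{n^*_v-1}{\hat{k}}=n^*_v/(n^*_v-\hat{k})$. Multiplying the two contributions gives exactly the claimed right-hand side.

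This argument involves no genuine difficulty; the only point requiring care is the indicator bookkeeping — specifically, invoking the hypothesis $s(v')\in\Mcal_0$ to conclude that precisely one factor of the product over $\Mcal_0$ acquires the extra $-1_{S_0(v')=j}$, and consistently identifying $1_{S_0(v)=s(v')}$ with $1_{S_0(v')=S_0(v)}$ along the way.
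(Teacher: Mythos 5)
Your proof is correct and essentially matches the paper's: the first equality is the multivariate hypergeometric identification (which the paper takes as already set up), and the second is the same elementary manipulation of binomial coefficients together with $n^*_{v,v'}=n^*_v-1$. The only cosmetic difference is that you treat $j=S_0(v')$ as the unique index acquiring the extra $-1$ and apply $\binom{m-1}{\ell}/\binom{m}{\ell}=(m-\ell)/m$ once, whereas the paper splits into the cases $S_0(v)=S_0(v')$ and $S_0(v)\neq S_0(v')$ before arriving at the same factor.
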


\begin{proof}
We follow a similar calculation to that in Claim \ref{clm:prob_ratio}.
We consider two cases. Firstly suppose $S_0(v) = S_0(v') = j$ for some $j\in \Mcal_0$. 
Then we have:

\[
\binom{n_j - 1_{S_0(v) = j} - 1_{S_0(v') = j}}{k_j} \cdot \binom{n_j - 1_{S_0(v) = j}}{k_j}^{-1} = \frac{n_j - k_j - 1}{n_j - 1},
\]
whereas for any $j' \neq j$ we have that:
\[
\binom{n_{j'} - 1_{S_0(v) = j'} - 1_{S_0(v') = j'}}{k_{j'}} \cdot \binom{n_{j'} - 1_{S_0(v) = j'}}{k_{j'}}^{-1} = 1.
\]

In the case where $s(v) = j$ but $S_0(v') = j'$ with $ j \neq j'$, the following hold:

\[\binom{n_j - 1_{S_0(v) = j} - 1_{S_0(v') = j}}{k_j} \cdot \binom{n_j - 1_{S_0(v) = j}}{k_j}^{-1}  = 1, 
\]
but
\[\binom{n_{j'} - 1_{S_0(v) = j'} - 1_{S_0(v') = j'}}{k_{j'}} \cdot \binom{n_{j'} - 1_{S_0(v) = j'}}{k_{j'}}^{-1}  = \binom{n_{j'} - 1}{k_{j'}} \cdot \binom{n_{j'}}{k_{j'}}^{-1} 
= \frac{n_{j'} - k_{j'}}{n_{j'}}.
\]
For any $j''\not = j',j$, we have 
\[\binom{n_{j''} - 1_{S_0(v) = j''} - 1_{S_0(v') = j''}}{k_{j''}} \cdot \binom{n_{j''} - 1_{S_0(v) = j''}}{k_{j''}}^{-1}  = 1.
\]

Furthermore, since $s(v')\in \Mcal_0$, then $n^*_{v,v'} = n^*_{v} -1$, whereby

\[
    \binom{n^*_{v}}{\hat{k}} \cdot \binom{n^*_{v,v'}}{\hat{k}}^{-1} = \frac{n^*_{v}}{n^*_v- \hat{k}}.
\]

Thus, the claim follows.
\end{proof}
Therefore,
\begin{eqnarray} 
\lefteqn{\prb{\hat{n}_0(v;j) = \hat{k}_j, \ j\in \Mcal_0 \mid \hat{n}^*_{v,v'}(v) = \hat{k}} = }\nonumber \\
&& \frac{n^*_v}{n^*_v-\hat{k}} \cdot \frac{n_{S_0(v')} - k_{S_0(v')}-1_{S_0(v')=S_0(v)}}{n_{S_0(v')}-1_{S_0(v')=S_0(v)}}\cdot\prb{n_0(v;j) = k_j, \ j\in\Mcal_0 \mid \hat{n}^*(v) = \hat{k}}. \nonumber \\
&& \label{eq:transformation}
\end{eqnarray}
Furthermore, (since $v'\in \Mcal_0$, whereby $n^*_{v,v'}=n^*_{v} -1$)
\begin{eqnarray*}
\prb{n^*_{v,v'}(v) =\hat{k}} &=& \binom{n^*_{v,v'}}{\hat{k}} p^{\hat{k}} (1-p)^{n^*_{v,v'}-\hat{k}} \\
&=& \frac{1}{1-p}\cdot \frac{n^*_v-\hat{k}}{n^*_{v}}\cdot \binom{n^*_v}{\hat{k}} p^{\hat{k}} (1-p)^{n^*_v-\hat{k}} \\
&=& \frac{1}{1-p}\cdot \frac{n^*_v-\hat{k}}{n^*_v}\cdot
\prb{n^*(v) =\hat{k}}.
\end{eqnarray*}
Therefore,
\begin{align*}
&\prb{n^*_{v,v'}(v)=\hat{k}}\prb{n_{j,v'}(v) = \hat{k}_j, \ j\in \Mcal_0 \mid n^*_{v,v'}(v) = \hat{k}} =\\
&=\frac{n_{S_0(v')} - k_{S_0(v')}-1_{S_0(v')=S_0(v)}}{(n_{S_0(v')}-1_{S_0(v')=S_0(v)})(1-p)}\cdot\prb{n_j(v) = k_j, \ j\in \Mcal_0 \mid n(v) = \hat{k}}\prb{n^*(v)=\hat{k}}.
\end{align*}
Now, we express $k_{S_0(v')} =p(n_{S_0(v')}-1_{S_0(v)=S_0(v')}) + \delta_{S_0(v')}$, where $n_{S_0(v')} = n^* p_{S_0(v')}$. Using this, we get 

\begin{align*}
&\sum_{\hat{k}=0}^{n^*_{v,v'}} \prb{n^*(v)=\hat{k}} \times \nonumber \\
& \sum_{(k_1,\ldots,k_{k_0}) \in \Kcal^{(i)}_{\hat{k}}}\frac{n_{S_0(v')} - k_{S_0(v')}-1_{S_0(v')=S_0(v)}}{(n_{S_0(v')}-1_{S_0(v')=S_0(v)})(1-p)}   \prb{n_j(v) = k_j, \ j\in \Mcal_0 \mid \hat{n}^*(v) = \hat{k}}= \nonumber \\
& \sum_{\hat{k}=0}^{n^*_{v,v'}}  \prb{n^*(v)=\hat{k}}  \times \nonumber \\
& \sum_{(k_1,\ldots ,k_{k_0}) \in \Kcal^{(i)}_{\hat{k}}}
\frac{(n_{S_0(v')}-1_{S_0(v')=S_0(v)})(1-p) + \delta_{S_0(v')}}{(n_{S_0(v')}-1_{S_0(v')=S_0(v)})(1-p)} \prb{n_j(v) = k_j, \ j\in\Mcal_0 \mid n^*(v) = \hat{k}}. \nonumber \\
\end{align*}
As above, the first summand is
\begin{eqnarray}
\sum_{\hat{k}=0}^{n^*_{v,v'}} \prb{n^*(v)=\hat{k}} \sum_{(k_1,\ldots,k_{k_0}) \in \Kcal^{(i)}_{\hat{k}}} \prb{n_j(v) = k_j, \ j\in \Mcal_0 \mid n^* (v) = \hat{k}}  \leq \E{J_v},\nonumber \label{eq:term1}
\end{eqnarray}
and the second summand is bounded as
\[
\sum_{(k_1,\ldots,k_{k_0}) \in \Scal_{\hat{k}}}
\frac{ |\delta_{S_0(v')}|}{(n_{S_0(v')}-1_{S_0(v')=S_0(v)})(1-p)} \prb{n_j (v) = k_j, \ 
j\in \Mcal_0 \mid n^*(v) = \hat{k}} = O\left(\sqrt{\frac{p}{n}}\right).
\]
These two imply that 
\begin{eqnarray}
\E{\Ens{J_v\cdot J_{v'}  \mid \hat{n}(v), \hat{n}(v')}}
&=&\E{\Ens{J_v  \mid n^*_{v,v'}(v)}} \cdot \E{\Ens{J_{v'}  \mid n^*_{v,v'}(v')}} \nonumber \\
&\leq& \E{J_v}\cdot \E{J_{v'}} +  O\left(\sqrt{\frac{p}{n}}\right). \label{eq:unconnected}
\end{eqnarray}
Using the bounds of~\eqref{eq:factorized_1} and~\eqref{eq:unconnected}
into~\eqref{eq:conditioning} we deduce that 
\[\E{J_v \cdot J_{v'}} \leq \E{J_v}\cdot \E{J_{v'}} +  O\left(\frac{1}{n^{1/2}}\right).\]

\end{proof}

\end{document}